\documentclass{article} 
\usepackage{amssymb}
\usepackage{amscd}
\usepackage{amsmath} 
\usepackage[dvipdfmx]{graphicx}
\usepackage{latexsym} 
\usepackage{enumerate}
\usepackage[usenames]{color}
\usepackage{lscape}  
\allowdisplaybreaks

\usepackage{theorem}
\theoremstyle{plain}
\theorembodyfont{\itshape}
\newtheorem{theorem}{Theorem}[section]
\newtheorem{proposition}[theorem]{Proposition}
\newtheorem{lemma}[theorem]{Lemma}
\newtheorem{corollary}[theorem]{Corollary}

\theorembodyfont{\rmfamily}

\newtheorem{remark}[theorem]{Remark}

\newtheorem{example}[theorem]{Example}

\newtheorem{problemgpf}{Problem I}

\newtheorem{problemocf}{Problem I\!I}

\makeatletter
\@addtoreset{figure}{section}
\@addtoreset{table}{section}
\makeatother

\def\re{\mathrm{e}}
\def\ri{\mathrm{i}}
\def\rI{\mathrm{I}}
\def\rII{\mathrm{I\!I}}
\def\rIm{\mathrm{Im}}
\def\rRe{\mathrm{Re}} 
\def\bC{\mathbb{C}}
\def\bD{\mathbb{D}}
\def\bN{\mathbb{N}}
\def\bQ{\mathbb{Q}}
\def\bR{\mathbb{R}}
\def\bZ{\mathbb{Z}}
\def\cD{\mathcal{D}}
\def\cF{\mathcal{F}}
\def\cI{\mathcal{I}}
\def\cJ{\mathcal{J}} 
\def\ba{\mbox{\boldmath $a$}}
\def\be{\mbox{\boldmath $e$}}
\def\bmf{\mbox{\boldmath $f$}}
\def\bmF{\mbox{\boldmath $F$}}
\def\bp{\mbox{\boldmath $p$}}

\def\bv{\mbox{\boldmath $v$}}
\def\bal{\mbox{\boldmath $\alpha$}}
\def\1{\mbox{\boldmath $1$}}
\def\vG{\varGamma}
\def\hgF{{}_2F_1}
\def\hgG{{}_2G_1}
\def\hgH{{}_2H_1}
\def\ds{\displaystyle}
\def\ts{\textstyle}
\title{\bf Arithmetic Constraints on Hypergeometric 
Identities\thanks{MSC2020: 33C05. Keywords: hypergeometric function; 
gamma product formula; arithmetic constraint; duality; reciprocity; 
contiguous relation. This work was supported by JSPS KAKENHI Grant Number JP22K03365.}}  
\author{Katsunori Iwasaki\thanks{Professor Emeritus, Hokkaido University, 
Sapporo 060-0810 Japan. E-mail: {\tt iwasaki@math.sci.hokudai.ac.jp}} \, 
and Mina Kusakabe\thanks{Passed away while this work was in progress.}}
\date{}  
\begin{document}
\maketitle
\begin{abstract}
The standard literature on special functions contains a lot of hypergeometric 
identities involving products and quotients of gamma functions, but still 
the occurrence of such identities is a sporadic phenomenon.   
This is because the existence of them is constrained by severe  
arithmetic conditions.   
We demonstrate this kind of constraints by focusing on a certain data region 
where the essential nature of the issue comes out clearly.      
\end{abstract}  
\section{Introduction} \label{sec:intro}
Let $\hgF(\alpha, \beta; \gamma; z)$ be the Gauss hypergeometric function 
and $\vG(w)$ the Euler gamma function.  
Given a sextuple of data $\lambda = (p, q, r; a, b; x) \in \bR^6$ with $0 < x < 1$, 
we set $f(w; \lambda) := \hgF(p w + a, q w + b; r w; x)$.     
As a continuation to our previous studies in \cite{Iwasaki1,Iwasaki2}, 
we are interested in the following problem. 
\begin{problemgpf} \label{Prob-I}
Find or characterize those data $\lambda$ which give rise to 
a {\sl gamma product formula} (GPF for short)      
\begin{equation} \label{eqn:GPF}
f(w; \lambda) = C \cdot d^w \cdot 
\dfrac{\vG(w + u_1) \cdots \vG(w + u_m)}{\vG(w + v_1) \cdots \vG(w + v_n)}, 
\qquad w \in \bC,    
\end{equation}
for some constants $C$, $d \in \bC^{\times}$ and $u_1, \dots, u_m$, $v_1, \dots, v_n \in \bC$,       
where $w$ is a free complex parameter. 
\end{problemgpf}
\par
The standard literature on special functions, e.g. \cite{AAR,Brychkov,Ebisu2,Erdelyi,OLBC}, 
contains a lot of identities of the form \eqref{eqn:GPF}, but still they occur only rarely  
among all data $\lambda = (p, q, r; a, b; x)$ which vary continuously.    
The aim of this article is to address the question of why GPFs are so 
sporadic, by observing that solutions $\lambda$ to Problem I must be subject to certain 
{\sl arithmetic constraints}, typically (but not always) like the condition    
that $p, q, r$ must be integers, $a, b$ must be rational numbers,  
and $x$ must be an algebraic number of a special form.         
\par
\begin{figure}[h]
\centerline{\includegraphics*[width=42mm,clip]{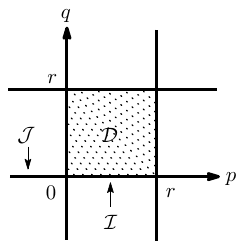}}
\vspace{-1mm}
\caption{Partition of the $(p, q)$-plane with a fixed $r > 0$.} 
\label{fig:square} 
\end{figure}
How GPFs come about depends strongly on the direction 
of the vector $\bp = (p, q, r) \in \bR^3$, which we call the {\sl principal part} of $\lambda$. 
This is a kind of Stokes phenomenon with respect to the large parameter $w$. 
In \cite{Iwasaki1,Iwasaki2}, with a fixed $r > 0$, the $(p, q)$-plane is partitioned into nine 
domains and their boundaries as indicated in Figure \ref{fig:square}.  
Our main focus in \cite{Iwasaki1,Iwasaki2} was on the central square $\cD$, the shaded area   
in Figure \ref{fig:square}. 
In this article we turn our attention to its {\sl boundary}.  
Although the boundary is thinner and more special than the open square,  
but exactly because of that, {\sl arithmetic nature} of Problem I comes out 
more vividly on the boundary.  
\par
Given $k \in \bN := \bZ_{\ge 1}$, the $k$-{\sl multiple} of $\lambda  = (p, q, r; a, b; x)$ 
is defined by $k \lambda := (k p, k q, k r; a, b; x)$. 
It is obvious from the definition of $f(w; \lambda)$ that $f(w; k \lambda) = f(k w; \lambda)$.  
This identity and Gauss's multiplication formula 
\begin{equation} \label{eqn:GMF}
\vG(k w) = (2 \pi)^{(1-k)/2} \cdot k^{k w - 1/2} \cdot 
\prod_{j=0}^{k-1} \vG\big(w + \ts \frac{j}{k} \big), \qquad k \in \bN 
\end{equation}
imply that any multiple of a solution to Problem I is again a solution; see \cite[\S1]{Iwasaki2} for details.        
A solution is said to be {\sl primitive} if it is not a multiple of any other solution. 
We say that $\lambda$ is {\sl elementary} if $f(w; \lambda)$   
has at most finitely many poles in $\bC_{w}$. 
We can then speak of elementary or non-elementary solutions to Problem I.        
\par
The hypergeometric function $\hgF(\alpha, \beta; \gamma; z)$ has two basic symmetries 
\begin{equation} \label{eqn:euler}
\hgF(\alpha, \beta; \gamma; z) = \hgF(\beta, \alpha; \gamma; z), \qquad  
\hgF(\alpha, \beta; \gamma; z) = (1-z)^{\gamma-\alpha-\beta } 
\hgF(\gamma-\alpha, \gamma-\beta; \gamma; z). 
\end{equation}    
The former identity is obvious from the definition of $\hgF(\alpha, \beta; \gamma; z)$, 
while the latter is known as Euler's transformation.   
They induce transformations of data $\lambda$ which takes one solution to another,   
\begin{equation} \label{eqn:bsym} 
\lambda = (p, q, r; a, b; x) \mapsto (q, p, r; b, a; x), \qquad 
\lambda = (p, q, r; a, b; x) \mapsto (r-p, r-q, r; -a, -b; x).  
\end{equation}
The boundary of the central square $\cD$ consists of four sides, 
but a combination of symmetries \eqref{eqn:bsym} brings any side 
to the {\sl south side} $\cI$ in Figure \ref{fig:square}. 
The full parameter region corresponding to the south side is given by           
\begin{equation*} \label{eqn:cI}
\cI := \{ \lambda = (p, q, r; a, b; x) \in\bR^6 \mid 0 < p < r, \,\, q = 0, \,\, 0 < x < 1 \},         
\end{equation*}
which is denoted by the same symbol $\cI$. 
Our main concern in this article is the study of Problem I in this region. 
As a part of our earlier result \cite[Theorem 2.4]{Iwasaki1} the following 
constraint is already known to us.  
\begin{theorem} \label{thm:recall}
Let $\lambda = (p, 0, r; a, b; x) \in \cI$.  
\begin{enumerate}
\setlength{\itemsep}{-1pt}
\item $\lambda$ is an elementary solution to Problem I if and only if $b$ 
is zero or a negative integer, in which case $f(w; \lambda)$ is a rational function of $w$.      
\item If $\lambda$ is a non-elementary solution to Problem I, then   
\begin{equation} \label{eqn:integral} 
p, \, r \in \bZ; \qquad p \equiv r \bmod 2; \qquad b = 1/2.  
\end{equation}
In particular, any non-elementary solution is a multiple of a primitive solution.  
\end{enumerate} 
\end{theorem}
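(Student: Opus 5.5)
The plan is to work directly with the series
\[
f(w;\lambda)=\sum_{k\ge0}\frac{(pw+a)_k\,(b)_k}{(rw)_k\,k!}\,x^k ,
\]
where $q=0$ has removed the $w$-dependence from the second numerator parameter.

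\textbf{Part 1.} If $b=-n$ with $n\in\bZ_{\ge0}$, the series terminates after $n+1$ terms. Then $f(w;\lambda)$ is a polynomial in $w$ divided by $(rw)_n$, hence a rational function of $w$. Factoring numerator and denominator into linear factors and writing $w+u=\vG(w+u+1)/\vG(w+u)$ puts $f$ in the form \eqref{eqn:GPF}, so $\lambda$ is an elementary solution. Conversely, if $b\notin\bZ_{\le0}$, the candidate poles of $f$ lie at $w=-j/r$, $j\in\bZ_{\ge0}$. I would show that infinitely many of them are genuine by computing the principal part at $rw=-j$: it is a tail series $\sum_{k>j}$ whose leading coefficient is $(b)_{j+1}$ times a nonvanishing factor. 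Its non-vanishing for large $j$ should follow from a ratio estimate on consecutive terms, using $0<x<1$ and $0<p<r$. Hence $\lambda$ is non-elementary, which proves item 1.

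\textbf{Part 2, right half-plane.} Assume a non-elementary GPF holds. As $w\to\infty$ with $\rRe w>0$, $(pw+a)_k/(rw)_k\to(p/r)^k$ termwise. This gives an asymptotic expansion
\[
f(w;\lambda)\sim(1-px/r)^{-b}\,\bigl(1+c_1w^{-1}+\cdots\bigr).
\]
Comparing with Stirling's formula for the right side of \eqref{eqn:GPF} forces $m=n$, $d=1$ and $\sum u_i=\sum v_j$, and it determines $C$.

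\textbf{Part 2, pole comparison.} The poles of the gamma quotient form finitely many arithmetic progressions $-u_i-\bZ_{\ge0}$ of step $1$. By Part 1 the actual poles of $f$ form an infinite subset of $(-1/r)\bZ_{\ge0}$, with residues computable from the tail series. Matching the two pole sets should give $1/r\in\bQ$ and, after finer bookkeeping, control of the residues.

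\textbf{Part 2, left half-plane (main step).} The sharper constraints come from the left half-plane. Using the reflection formula, the right side of \eqref{eqn:GPF} for $\rRe w\to-\infty$ becomes a Stirling-type factor times a quotient $\prod\sin\pi(w+v_j)/\prod\sin\pi(w+u_i)$. This quotient is a rational function of $\re^{2\pi\ri w}$. On the other side, I would derive the left half-plane asymptotics of $f$ by saddle-point analysis of an Euler-type integral representation, or by a connection formula for large parameters. One expects two contributions:
\begin{enumerate}
\item the algebraic term $(1-px/r)^{-b}$;
\item an oscillatory term with exponentials $\re^{\pm\pi\ri pw}$, $\re^{\pm\pi\ri rw}$, multiplied by a power $w^{b-1/2}$ (up to shift) coming from the $(b)_k$ factor at the saddle.
\end{enumerate}
Matching frequencies with the $1$-periodic sine quotient forces $p,r\in\bZ$. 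Matching signs of the exponentials forces $p\equiv r\bmod2$. The sine quotient contributes only integer powers of $w$ relative to the algebraic term, while the saddle contribution carries the exponent $b-1/2$, so consistency forces $b=1/2$ (the alternative $b\in\bZ_{\le0}$ being excluded by non-elementarity). Making this saddle-point asymptotics rigorous and uniform, away from the poles on the negative axis, is where I expect the real difficulty.

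\textbf{Primitivity.} Given \eqref{eqn:integral}, suppose $\lambda=k\lambda'$ with $\lambda'$ a solution. Then $\lambda'$ is again a non-elementary solution, so its $r$-component is a positive integer strictly smaller than $r$. A descending chain of such factorizations must therefore terminate, and its last member is a primitive solution of which $\lambda$ is a multiple.
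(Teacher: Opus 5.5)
You cannot be compared line by line with the paper here, because the paper does not prove this statement. It imports it from \cite{Iwasaki1}, and the remark after Theorem~\ref{thm:lift} says the underlying argument is asymptotic analysis of $f(w;\lambda)$ as $w\to\infty$. So your overall plan is of the right kind. Part~1 and the primitivity paragraph are essentially fine, with two small corrections. The residue factor $(a-pj/r)_{j+1}$ does vanish whenever $pj/r-a\in\bZ$, possibly for a whole residue class of $j$; but this never happens for two consecutive $j$. And the tail $\hgF(a-pj/r+j+1,\,b+j+1;\,j+2;\,x)$ is positive for large $j$ simply because all its parameters are positive, so no ratio estimate is needed. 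The genuine gap is the left half-plane step: the asymptotic shape you predict is incomplete, and the conclusions you draw from it do not follow.

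\emph{The missing exponential factor.} Put $\alpha=pw+a$, $\beta=b$, $\gamma=rw$, and $k:=\hgF(\alpha,\beta;\alpha+\beta-\gamma+1;1-x)$. Inverting the standard connection formula that expresses $k$ through $f$ and $g$ gives the exact identity
\[
f=\frac{\vG(1-\gamma+\alpha)\,\vG(1-\gamma+\beta)}{\vG(1-\gamma)\,\vG(1-\gamma+\alpha+\beta)}\,k
+\frac{\sin\pi\alpha}{\vG(\beta)\,\sin\pi\gamma}\cdot
\frac{\vG(1-\alpha)\,\vG(1-\gamma+\alpha)\,\vG(1-\gamma+\beta)}{\vG(1-\gamma)\,\vG(2-\gamma)}\,g .
\]
When $\rRe\,w\to-\infty$, both $k$ and $g$ are governed by the endpoint $t=0$ of their Euler integrals, which also removes your uniformity worry. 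One obtains
\[
f=(1-px/r)^{-b}\{1+O(w^{-1})\}+c\,\frac{\sin\pi(pw+a)}{\sin\pi rw}\,\kappa^{-w}(-w)^{b-1/2}\{1+O(w^{-1})\},
\]
with $c\neq0$ (since $b\notin\bZ_{\le0}$) and $\kappa:=p^p(r-p)^{r-p}x^r/\{r^r(1-x)^{r-p}\}$. You missed the real exponential $\kappa^{-w}$. This term comes from the branch point $t=1/x$ of $(1-xt)^{-b}$, not from the saddle $t=p/r$. It must be handled before any frequency matching makes sense, and $\kappa=1$ is exactly $Y(x;\bp)=0$ of Corollary~\ref{cor:alg-eq}.

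\emph{The argument for $b=1/2$.} Your exponent argument only yields $b\in\frac12+\bZ$. The gamma side equals $C\,S(w)\{1+c_1w^{-1}+c_2w^{-2}+\cdots\}$ with $S$ a $1$-periodic sine quotient, so any integer exponent can be absorbed. The missing idea is a leading-order comparison along a horizontal line $\rIm\,w=\eta$ that avoids poles, using the fact that the non-oscillatory term carries no periodic factor.
\begin{itemize}
\item If the oscillatory term is of lower order ($\kappa<1$, or $\kappa=1$ and $b<1/2$), then $S\to1$ along the line. By periodicity $S\equiv1$, so the gamma quotient is rational and $f$ has finitely many poles. This contradicts non-elementarity.
\item If it is of higher order ($\kappa>1$, or $\kappa=1$ and $b>1/2$), it is unbounded on a line where $S$ is bounded, again a contradiction.
\end{itemize}
Only the case $\kappa=1$, $b=1/2$ survives. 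There one gets the exact identity $S(w)=1+c'\,\sin\pi(pw+a)/\sin\pi rw$, because an almost periodic function tending to $0$ along the line vanishes identically. The $1$-periodicity of the right side then gives $r-p\in2\bZ$ (let $\rIm\,w\to+\infty$), and afterwards $p,r\in\bZ$.
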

\par
The aim of this article is to strengthen Theorem \ref{thm:recall} to a large extent 
by establishing the main results stated in \S \ref{sec:results}.  
This goal is achieved by making a careful examination of contiguous relations 
for hyergeometric functions and by exploiting the concepts of {\sl duality} and {\sl reciprocity} 
introduced in \cite[Definitions 1.2 and 1.3]{Iwasaki2}, which are the transformations of data 
$\lambda \mapsto \lambda' = (p',q',r';a', b'; x')$ and 
$\lambda \mapsto \check{\lambda} = (\check{p}, \check{q}, \check{r}; \check{a}, \check{b}; \check{x})$ 
defined by  
\begin{subequations}
\begin{gather}
p' := p, \quad q' := q, \quad r' := r; \quad 
a' := 1- \frac{2 p}{r}-a, \quad b' := 1- \frac{2 q}{r}-b; \quad x' := x, 
\label{eqn:duality} \\[1mm]
\begin{split}
\check{p} &:= -p, \qquad \check{q} := -q, \qquad \check{r} := r-p-q;  \qquad \check{x} := 1-x;  
\\[1mm]
& \check{a} := \dfrac{(r-q)(1-a) - p b}{r-p-q}, \qquad \check{b} := \dfrac{(r-p)(1-b) - qa}{r-p-q}. 
\end{split} 
\label{eqn:recip} 
\end{gather}
\end{subequations} 
Notice that duality defines an involution on $\cI$, while reciprocity provides 
a bijection of $\cI$ onto  
\begin{equation*} \label{eqn:cJ} 
\cJ := \{ \lambda = (p, q, r; a, b; x) \in \bR^6 \mid p < 0, \,\, q = 0, \,\, 0 < r, \,\, 0 < x < 1 \}, 
\end{equation*}
which corresponds to the half-line $\cJ$ in Figure \ref{fig:square}.   
Notice also that for $\lambda \in \cI$ if $b = 1/2$ then $b' = \check{b} = 1/2$. 
\section{Main Results} \label{sec:results} 
We state the main results of this article and give a couple of examples to illustrate them.   
\begin{theorem} \label{thm:a-x}
Let $\lambda = (p, 0, r; a, 1/2; x) \in \cI$ be any non-elementary solution to Problem I.   
\begin{enumerate}
\setlength{\itemsep}{-1pt}
\item The dual $\lambda' = (p, 0, r; a', 1/2; x) \in \cI$ to the solution $\lambda$ is 
also a non-elementary solution to Problem I.  
\item The ratio $s := r/p$ is an integer with $s \ge 2$ and $p (s-1) \equiv 0 \bmod 2$.  
There exist $j$, $j' \in \bZ_{\ge 0}$ such that 
\begin{equation} \label{eqn:saap}
a = j/s, \qquad a' = j'/s, \qquad j + j' = s-2.   
\end{equation}
In particular, $a$ and $a'$ are rational numbers.    
\item The argument $x$ is the unique root in the interval $(0, \, 1)$ of 
the algebraic equation 
\begin{equation} \label{eqn:alg-eq}
\varphi_s(z) := (s-1)^{s-1} z^s -s^s (1-z)^{s-1} = 0. 
\end{equation}
In particular, $x$ is an algebraic number.   
\end{enumerate}
\end{theorem}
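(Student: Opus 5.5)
The plan is to reduce the GPF to a difference equation with rational coefficients and then read off the constraints from poles and from asymptotics. By Theorem~\ref{thm:recall} we already know $p, r \in \bZ$, $p \equiv r \bmod 2$ and $b = 1/2$.

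\emph{Recurrence and duality.} Shifting $w \mapsto w+1$ moves $(\alpha,\gamma) = (pw+a, rw)$ by the integer vector $(p,r)$. Composing Gauss' contiguous relations therefore gives a three-term recurrence
\[ f(w+2) + A(w) f(w+1) + B(w) f(w) = 0 \]
with $A, B$ rational in $w$. I will compute the Casoratian of this recurrence explicitly; it should be a gamma product built from the factors produced by the contiguity. Next I will exhibit a second solution $g(w)$ of the same recurrence that is expressed through $f(w;\lambda')$. My candidate is the Kummer solution at $z=0$ with $\gamma \mapsto 2-\gamma$, namely $z^{1-\gamma}\hgF(\alpha-\gamma+1, \beta-\gamma+1; 2-\gamma; z)$, after the substitution $w \mapsto -w + c$ and an Euler transformation. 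The shift $a' = 1 - 2p/r - a$ should be exactly what makes this identification work.

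If $f(w;\lambda) = C d^w \Gamma$-product, then $f(w+1)/f(w)$ is $d$ times a rational function. The Casoratian identity then gives $g(w+1)/f(w+1) - g(w)/f(w)$ as an explicit gamma-ratio. I would show that the relevant ratio $g/f$ itself becomes a gamma product, rather than only a sum of one. I would do this either by analysing the resulting first-order relation for $g$, or by showing that $f(w;\lambda')$ satisfies the reduced Riccati equation with a rational solution. This proves item~1. I would also check that $\lambda'$ is non-elementary, using item~1 of Theorem~\ref{thm:recall}: $b' = 1/2$ is not a nonpositive integer.

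\emph{Pole analysis giving $s$ and $a$.} The function $f(w)$ has potential simple poles at $w = -n/r$, $n \in \bZ_{\ge 0}$, coming from $\Gamma(rw)^{-1}$ in the series. The residue is proportional to $(pw+a)_{n+1}(1/2)_{n+1}/(n+1)!$ times a hypergeometric value with integer parameters shifted. It vanishes exactly when $-pn/r + a \in \bZ_{\le 0}$ falls in the right range. A gamma product can only have poles along finitely many arithmetic progressions of step $1$. So the surviving poles $\{-n/r\}$ must form a union of finitely many such progressions, and the numerator $(pw+a)_{n+1}$ must kill the rest. Matching the densities of the two pole sets should force $r/p = s \in \bZ$ and $a \in \frac1s \bZ$.

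Running the same argument on the dual solution $\lambda'$, which is legitimate by item~1, gives $a' \in \frac1s\bZ$. The identity $a + a' = 1 - 2/s$ then yields $j + j' = s-2$, and nonnegativity of $j, j'$ follows from the range of the cancellation. I would derive the parity condition $p(s-1) \equiv 0 \bmod 2$ from $p \equiv r = ps \bmod 2$.

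\emph{Determining $x$.} As $w \to \infty$, a GPF forces $f(w+1)/f(w) \sim d\, w^{m-n}(1 + O(1/w))$. In particular $d$ must coincide with one characteristic root of the limiting recurrence $\mu^2 + A_\infty \mu + B_\infty = 0$. On the other hand, a saddle-point evaluation of the Euler integral
\[ \int_0^1 t^{pw+a-1}(1-t)^{(r-p)w-a-1}(1-xt)^{-1/2}\,dt \]
identifies the dominant root directly. The same analysis applied after reciprocity ($x \mapsto 1-x$, landing in $\cJ$) identifies the root governing the other direction. Consistency of $d$ in both regimes, or the requirement that the two characteristic roots merge so that the subdominant solution does not spoil the gamma-product form, should reduce to $(s-1)^{s-1}x^s = s^s(1-x)^{s-1}$. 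Uniqueness of the root in $(0,1)$ is elementary: $\varphi_s$ is increasing there, with $\varphi_s(0) < 0 < \varphi_s(1)$.

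\emph{Main obstacle.} I expect the hardest step to be the first: proving that the second recurrence solution built from $\lambda'$ is genuinely a gamma product, not merely a gamma sum. Explicitly controlling the contiguity coefficients for general integer $(p,r)$ is also laborious. I would start with the case $p=1$, compute there, and then use multiples and Gauss' formula \eqref{eqn:GMF} to pass to general $p$.
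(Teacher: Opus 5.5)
Your plan has genuine gaps. The decisive one is the pole analysis that is supposed to give $s\in\bZ$ and $a\in\frac1s\bZ$: it cannot work. Once $p,r\in\bZ$ (you have this from Theorem~\ref{thm:recall}), the factor $(a-pn/r)_{n+1}$ vanishes, for large $n$, exactly on one residue class of $n$ modulo $r/\gcd(p,r)$, or on none. That is a union of classes modulo $r$, so the surviving points $-n/r$ always form finitely many progressions of step $1$, and there is nothing to match. For instance, with $(p,r,a)=(3,5,0)$ the Pochhammer cancellations remove exactly the points with $5\mid n$, leaving the pole set of $\vG(w+\frac15)\vG(w+\frac25)\vG(w+\frac35)\vG(w+\frac45)$, although $r/p\notin\bZ$. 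With $(p,r)=(1,3)$ and irrational $a$, nothing is removed, and you get the pole set of $\vG(w)\vG(w+\frac13)\vG(w+\frac23)$. Moreover, the residue at $w=-n/r$ also carries the factor $\hgF(a-pn/r+n+1,\,n+\frac32;\,n+2;\,x)$, which may vanish at the special argument $x$. So your claim that the residue vanishes ``exactly when'' the Pochhammer factor does is unjustified.

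In the paper the constraint comes from the interplay of three solutions. With GPFs for $f$ and for $g$ (the latter via the dual), the connection formula $h=C_f f+C_g g$ writes the entire function $h(w;\lambda)/\vG((r-p)w+\frac12-a)$ as $(p^p/r^r)^w\chi(w)$ times a gamma ratio, where $\chi$ is a combination of period-$1$ trigonometric quotients. Since $\chi$ is entire and bounded, it is constant. In the remaining gamma ratio, $\vG(w+a/p)$ survives in the numerator because $0\notin I_p$. Entireness then forces $a/p$ to exceed, by a nonnegative integer, either some $(j-a)/(r-p)$ with $j\in J_p$ or some $(j-\frac12)/r$ with $j\in J_q$. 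In the latter case $j\ge1$, because $0\notin J_q$ (Proposition~\ref{prop:avoid}). This yields $a=pj/r$ or $a=p(j+\frac12)/r$ with $j\in\bZ_{\ge0}$ (Lemma~\ref{lem:dich}). Only after combining this with the same dichotomy for $\lambda'$ and with $a+a'=1-2p/r$ does one get $r/p\in\{j+j'+2,\ j+j'+3,\ j+j'+\frac52\}$. The last two cases must still be excluded by evaluating $\Phi$ and $P$ at $\zeta_0=-a/p$ and invoking Lemma~\ref{lem:thgs} (Lemma~\ref{lem:cand2}). None of this is in your plan. The nonnegativity of $j,j'$ also comes from this entireness argument, not from a ``range of cancellation''.

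The duality step is incomplete in the way you suspected. The Casoratian only gives an inhomogeneous first-order equation for $g/f$, whose solution is an indefinite sum. A second-order recurrence with one hypergeometric solution need not have a second one. The missing mechanism is this: the two-term relation for $f$ is the vanishing of the $(1,2)$-entry $Q(w;\lambda)$ of the contiguity matrix, and the contiguity relations hold for $\hgG$ as well, after an explicit gamma normalization. So $Q\equiv0$ gives at once $g(w+1;\lambda)/g(w;\lambda)=\Psi_g(w;\lambda)R(w;\lambda)$, as in \eqref{eqn:ratio2}. The identity $g(w;\lambda)=x^{1-rw}(1-x)^{(r-p)w-a-1/2}f(w'+1;\lambda')$, with $w'=\frac2r-1-w$, then turns this into a rational ratio for $\lambda'$, and Theorem~\ref{thm:lift} applies.

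For item~3, merging of the characteristic roots is in fact the right condition. The leading term of $A(w;\lambda)$ is lower triangular with diagonal entries $1$ and $r^r(x-1)^{r-p}/(p^p(r-p)^{r-p}x^r)$, and since $r-p$ is even the latter equals $1$ iff $Y(x;\bp)=0$. But you give no reason why the merging must occur. The paper obtains it from the $w^{-1}$-coefficient $C_2\,Y(x;\bp)$ of the identically vanishing $(1,2)$-entry (Lemma~\ref{lem:p-p}). Alternatively, your consistency-of-$d$ idea works if you use the dual rather than the reciprocal. Once the $g$-relation above is available, both $f(w+1;\lambda)/f(w;\lambda)$ and $f(w+1;\lambda')/f(w;\lambda')$ are rational and tend to $1$, since $f(w;\lambda)$ and $f(w;\lambda')$ both tend to $(1-px/r)^{-1/2}$ as $w\to+\infty$. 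Letting $w\to\infty$ in $x^{-r}(1-x)^{r-p}f(w';\lambda')/f(w'+1;\lambda')=\Psi_g(w;\lambda)R(w;\lambda)$ gives $x^{-r}(1-x)^{r-p}=p^p(r-p)^{r-p}r^{-r}$. Once $s\in\bZ$ is known, taking $p$-th roots of positive quantities turns this into $\varphi_s(x)=0$.

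Finally, your proposed reduction to $p=1$ via multiples is not available. The solution in Example~\ref{ex:irr-x} has $p=2$ and is primitive, and there is no solution at all with principal part $(1,0,2)$.
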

\par
Certainly, equation \eqref{eqn:alg-eq} has a unique root $x \in (0, \, 1)$,    
since $\varphi_s(0) = -s^s < 0 < \varphi_s(1) = (s-1)^{s-1}$ and 
$\varphi_s'(z) = s (s-1) \{ (s-1)^{s-2} z^{s-1} + s^{s-1} (1-z)^{s-2} \} > 0$ for $z \in (0, \, 1)$. 
Theorem \ref{thm:a-x} implies that, once the ratio $s = r/p$ is given,  
there are at most $s-1$ possibilities for the triple $(a, a', x)$.  
The theorem does not claim that any expression for $(a, a')$ in 
\eqref{eqn:saap} leads to a solution; some or all of those candidates 
may fail to give a solution. 
\begin{theorem} \label{thm:gpf} 
Let $\lambda = (p, 0, r; a, 1/2; x) \in \cI$ be any non-elementary solution to Problem I. 
\begin{enumerate} 
\setlength{\itemsep}{-1pt}
\item There exists a factorization into two components of the right-hand side below   
\begin{equation} \label{eqn:v*}
\prod_{i=1}^r (w+v_i) \prod_{i=1}^r (w+v_i^*) =  
\prod_{i=0}^{p-1}\Big(w+ \ts\frac{i+a}{p} \Big) 
\ds \prod_{j=0}^{r-p-1} \Big( w + \ts\frac{j-a}{r-p} \Big) 
\ds \prod_{j=0}^{r-1} \Big( w + \ts\frac{j-1/2}{r} \Big)   
\end{equation}
such that the solution $\lambda$ and its dual $\lambda' \in \cI$ admit GPFs of the forms   
\begin{subequations} \label{eqn:gpf}
\begin{align}
f(w; \lambda) 
&= C \cdot \dfrac{\prod_{i=0}^{r-1} \vG(w+ \frac{i}{r})}{\prod_{i=1}^r \vG(w + v_i)}, 
\label{eqn:gpf1} \\[1mm]
f(w; \lambda') 
&= C' \cdot \dfrac{\prod_{i=0}^{r-1} \vG(w+ \frac{i}{r})}{\prod_{i=1}^r \vG(w + v_i')}, 
\qquad v_i' := 1 - \frac{2}{r} - v_i^*,  
\label{eqn:gpf2}  
\end{align}
\end{subequations}
where $C$ and $C'$ are nonzero constants. 
Moreover $v_1, \dots, v_r$ and $v_1', \dots, v_r'$ sum to the same value      
\begin{equation} \label{eqn:sum}  
v_1 + \cdots + v_r = v_1' + \cdots + v_r' = (r-1)/2.    
\end{equation}
\item There exist a division relation    
\begin{equation} \label{eqn:division2}
\prod_{i=0}^{p-1} \big(w+ \ts\frac{i+a}{p} \big) \, \Big| \, \ds \prod_{i=1}^r (w+v_i),   
\end{equation} 
which allows us to arrange $v_1, \dots, v_r$ so that 
$\prod_{i=0}^{p-1}\big(w + \ts \frac{i+a}{p} \big) = \prod_{i=r-p+1}^r (w + v_i)$.    
With this convention the reciprocal $\check{\lambda} \in \cJ$ to the 
solution $\lambda$ is also a solution to Problem I having GPF 
\begin{equation} \label{eqn:gpf-r}
f(w; \check{\lambda}) = \check{C} \cdot \dfrac{\prod_{i=0}^{r-p-1} 
\vG\big(w + \frac{i}{r-p}\big)}{ \prod_{i=1}^{r-p} \vG( w + \check{v}_i) }, 
\qquad \check{v}_i := v_i  - \frac{1/2 -a}{r-p},  
\end{equation}
where $\check{C}$ is a nonzero constant. 
\end{enumerate} 
\end{theorem}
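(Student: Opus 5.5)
We would prove Theorem \ref{thm:gpf} by comparing the GPF \eqref{eqn:GPF} with the contiguous relations satisfied by $f(w;\lambda)$ in the shift $w \mapsto w+1$. By Theorem \ref{thm:recall} and Theorem \ref{thm:a-x}, $p, r \in \bZ$, $b = 1/2$, and $s = r/p \in \bZ_{\ge 2}$.

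\textbf{Step 1: the recurrence.} Since $(pw+a,\,1/2;\,rw)$ shifts by the integer vector $(p,0,r)$, $f(w;\lambda)$ satisfies a second-order linear difference equation
\[
A(w)\, f(w+2) + B(w)\, f(w+1) + C(w)\, f(w) = 0
\]
with rational coefficients. These coefficients come from iterated contiguous relations. We would compute the leading factors of $A$ and $C$ explicitly. They are products of Pochhammer-type factors
\[
(pw+a)_p, \qquad (rw-pw-a)_{r-p}, \qquad (rw-1/2)_r, \qquad (rw)_r.
\]
By Gauss's formula \eqref{eqn:GMF}, these factor into
\[
\prod_i \Big(w+\tfrac{i+a}{p}\Big), \qquad \prod_j \Big(w+\tfrac{j-a}{r-p}\Big), \qquad \prod_j \Big(w+\tfrac{j-1/2}{r}\Big), \qquad \prod_i \Big(w+\tfrac{i}{r}\Big).
\]

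\textbf{Step 2: the shape of the GPF.} If $f$ equals a gamma ratio $R(w)$, then $R(w+1)/R(w)$ is rational. Substituting it into the recurrence gives a Riccati-type rational identity. This identity forces the gamma ratio ratio to match a factorization of the coefficient polynomials.

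Asymptotic analysis as $w \to +\infty$ should fix the remaining data:
\begin{itemize}
\item $d = 1$ on the side $\cI$, where the relevant characteristic root is a double root;
\item the numerator is $\prod_{i=0}^{r-1} \vG(w+i/r)$, coming from $\vG(rw)$;
\item the denominator has exactly $r$ factors.
\end{itemize}
The Riccati identity should then yield a splitting of the right-hand side of \eqref{eqn:v*} into $\prod(w+v_i)$, the denominator parameters of $f$, and $\prod(w+v_i^*)$, the complementary factor of $C(w)$. This gives \eqref{eqn:gpf1}.

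\textbf{Step 3: the dual.} Applying Euler's transformation \eqref{eqn:euler}, or equivalently the duality relation from \cite{Iwasaki2} that links $f(w;\lambda)$ and $f(w;\lambda')$ through the Wronskian/Casoratian of the recurrence, expresses $f(w;\lambda')$ through the complementary factor. This produces \eqref{eqn:gpf2} with $v_i' = 1-2/r-v_i^*$; Theorem \ref{thm:a-x}(1) guarantees that $\lambda'$ is a solution.

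\textbf{Step 4: the sum rule.} The identity \eqref{eqn:sum} follows from Stirling's formula. The $w^{-1}$ term of $\log f \to 0$ (since $f \to 1$ as $w \to \infty$, because $x<1$ and $f \sim {}_1F_0$-type behaviour) forces $\sum v_i = \sum_{i} i/r = (r-1)/2$, and the same argument applies to $\lambda'$.

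\textbf{Step 5: part 2.} Poles of $f(w;\lambda)$ can only arise where $rw \in \bZ_{\le 0}$. Zeros of $f$ forced by the recurrence occur at zeros of $(pw+a)_p$ propagated by shifts. The factor $\prod(w+(i+a)/p)$ cannot lie in the complementary part, since otherwise $f(w;\lambda')$ would acquire non-gamma behaviour contradicting the non-elementarity of $\lambda'$. This gives the divisibility \eqref{eqn:division2}.

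For the reciprocal $\check\lambda$, we would use the connection formula of $\hgF$ between $z=x$ and $z=1-x$, as in \cite{Iwasaki2}. This writes $f(w;\check\lambda)$ as $f(w;\lambda)$ times a gamma ratio, one connection term vanishing since $b=1/2$ makes $\check b=1/2$. Cancelling the $p$ factors $\vG(w+v_i)$, $i>r-p$, against $\prod\vG(w+(i+a)/p)$ and using Gauss's formula for $\vG((r-p)w)$ yields \eqref{eqn:gpf-r}, with the shift $(1/2-a)/(r-p)$ coming from $\check a$ in \eqref{eqn:recip}.

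\textbf{Main obstacle.} The main difficulty is Step 2: proving rigorously that the rational identity forces an exact factorization \eqref{eqn:v*} into two degree-$r$ parts. Our first approach would be to compare zeros and poles of both sides on generic shifted lattices, using the irreducibility of the recurrence, which holds because $\lambda$ is non-elementary.
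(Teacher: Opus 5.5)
Your plan leaves the substantive steps unproved, and part (2) rests on a false claim. No connection coefficient vanishes. In $h(w;\lambda)=C_f(w)\,f(w;\lambda)+C_g(w)\,g(w;\lambda)$, both $C_f(w)=\vG((r-p)w+1/2-a)\,\vG(1-rw)/\{\vG(1-a-pw)\,\vG(1/2)\}$ and $C_g(w)=\vG((r-p)w+1/2-a)\,\vG(rw-1)/\{\vG((r-p)w-a)\,\vG(rw-1/2)\}$ are nonzero meromorphic functions, whatever $\check b$ is. The missing idea is to rewrite both terms as one common factor times trigonometric coefficients. The tools are \eqref{eqn:gpf1}, the product formula \eqref{eqn:GPF-g} for $g(w;\lambda)$ (which needs the dual GPF first), and Gauss's multiplication formula. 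The common factor is $\vG((r-p)w+1/2-a)\,(p^p/r^r)^w\prod_{i=0}^{p-1}\vG(w+\frac{i+a}{p})/\prod_{i=1}^r\vG(w+v_i)$. The coefficients sum to a $1$-periodic, entire, bounded function $\chi(w)$, which is constant by Liouville (Lemmas \ref{lem:h} and \ref{lem:chi}). Only then does entirety of $h(w;\lambda)/\vG((r-p)w+1/2-a)$ give a domination relation $\succ$. It relates the multiset $\{\frac{i+a}{p}\}_{i\in[p]\setminus I_p}$ to $\{\frac{j-a}{r-p}\}_{j\in J_p}\cup\{\frac{j-1/2}{r}\}_{j\in J_q}$. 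Turning this into the inclusion \eqref{eqn:division2} still requires two more inputs. The first is Proposition \ref{prop:avoid}: $0\notin J_q$, and $0\notin J_p$ unless $ra/(r-p)\in\bZ$, proved via the terminating sums at $\xi_0,\eta_0$ and Lemma \ref{lem:thgs}. The second is $a\ge 0$ and $a'\ge 0$, from the same relation applied to $\lambda$ and $\lambda'$, so that all parameters lie in $[0,1)$. Your Step 5 justification has no content: $f(w;\lambda')$ is a gamma product however the factors in \eqref{eqn:v*} are split. Likewise, zeros of $f$ at $pw+a\in\bZ_{\le 0}$ are a consequence of \eqref{eqn:division2}, not something the recurrence visibly forces; there $f$ is a terminating polynomial in $x$, not obviously zero.

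In part (1), the step you flag as the main obstacle needs a concrete input, and it is not a Riccati identity. Theorem \ref{thm:cfcr} gives $Q(w;\lambda)\equiv 0$, so the contiguous matrix $A(w;\lambda)$ is lower triangular, $f(w+1;\lambda)/f(w;\lambda)=A(w;\lambda)_{11}$ and $\det A=A_{11}A_{22}$. The entries have the explicit shape of Corollary \ref{cor:A(w;l)}: polynomials $\phi_{11}^{(r-1)},\phi_{22}^{(r)}$ over known Pochhammer denominators. Comparing with \eqref{eqn:detA(w;l)} gives \eqref{eqn:phi}, which forces $\deg\phi_{22}^{(r)}=r$ and \eqref{eqn:R(w;l)}. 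Hence $\prod(w+v_i)$ is proportional to $\phi_{22}^{(r)}$ and divides the right side of \eqref{eqn:v*}. A Riccati identity alone does not show that the denominator of $f(w+1)/f(w)$ divides that product.

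For the dual, Euler's transformation is the wrong tool: it sends $\lambda$ to $(r-p,r,r;-a,-1/2;x)$, not to $\lambda'$. Citing Theorem \ref{thm:a-x}(1) is circular, since the paper proves it by this very computation, and Theorem \ref{thm:a-x}(2) depends on \eqref{eqn:division2}. The actual link is the second solution at $z=0$: $g(w;\lambda)=x^{1-rw}(1-x)^{(r-p)w-a-1/2}f(2/r-w;\lambda')$. This is combined with $g(w+1;\lambda)/g(w;\lambda)=\Psi_g(w;\lambda)\,f(w+1;\lambda)/f(w;\lambda)$. Here $\Psi_g$ is a constant times the right side of \eqref{eqn:v*} over $\prod_{i=0}^{r-1}(w+\frac{i-1}{r})(w+\frac{i}{r})$. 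Cancelling $\prod(w+v_i)$ leaves exactly $\prod(w+v_i^*)$. The constant becomes $1$, so no $d^w$ appears in \eqref{eqn:gpf2}, only because $Y(x;\bp)=0$. That relation comes from the $w^{-1}$ term of the $(1,2)$-entry in Lemma \ref{lem:p-p}, and you never derive it.

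A minor point: $f(w;\lambda)\to(1-px/r)^{-1/2}$, not $1$, and what must vanish in Stirling's formula is the $\log w$ coefficient. The sum rule \eqref{eqn:sum} still follows.
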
 
\par
The dilation constant $d$ in the general template \eqref{eqn:GPF} is just $1$ 
in GPFs \eqref{eqn:gpf} and \eqref{eqn:gpf-r}. 
The fractions in these formulas may be reducible, that is, 
they may have common factors in their numerators and denominators. 
\begin{theorem} \label{thm:degree} 
Given any non-elementary solution $\lambda =(p, 0, r; a, 1/2; x) \in \cI$ 
to Problem I, let $\deg x$ denote the degree of $x$ as an algebraic number. 
If the ratio $s := r/p$ is $2$ then $\deg x = 2$, whereas if $s \ge 3$ then 
\begin{equation*} \label{eqn:lb}
\deg x > \delta(s) := \frac{(\log p_s) (s-1)}{1 + \log(s-1)},  
\end{equation*}
where $p_s$ is the least prime factor of the integer $s-1$. 
In particular, once an upper bound for $\deg x$ is given, there 
remain only a finite number of possibilities for the ratio $s = r/p$.   
\end{theorem}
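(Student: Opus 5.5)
The case $s=2$ is a direct computation. By Theorem~\ref{thm:a-x}(3), $x$ is the root in $(0,1)$ of $\varphi_2(z)=z^2-4(1-z)=z^2+4z-4$, so $x=2\sqrt2-2$. This is irrational, hence $\deg x=2$. For $s\ge 3$ the plan is to compare an $\ell$-adic lower bound with an archimedean upper bound for the leading coefficient of the minimal polynomial of $x$ (or of a closely related algebraic number), where $\ell:=p_s$ is the least prime factor of $s-1$.

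Let $g\in\bZ[z]$ be the primitive minimal polynomial of $x$, of degree $n=\deg x$, with leading coefficient $a_g$. Since $\varphi_s$ has integer coefficients and vanishes at $x$, Gauss's lemma gives $\varphi_s=g\,h$ in $\bZ[z]$.

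The first step is the Newton polygon of $\varphi_s$ at $\ell$. Put $m:=v_\ell(s-1)\ge1$; since $s\equiv1\bmod\ell$, $s$ is an $\ell$-adic unit. The coefficient of $z^s$ is $(s-1)^{s-1}$, of valuation $(s-1)m$. The coefficient of $z^k$ for $k\le s-1$ is $\pm s^s\binom{s-1}{k}$, of valuation $\ge0$, and equal to $0$ for $k=0$ and $k=s-1$. The Newton polygon therefore consists of a horizontal segment from $(0,0)$ to $(s-1,0)$ followed by a segment of slope $(s-1)m$. Consequently:
\begin{itemize}
\item $s-1$ roots of $\varphi_s$ are $\ell$-adic units, and reducing the equation modulo $\ell$ shows they satisfy $v_\ell(1-z)=m$;
\item exactly one root $\xi\in\bQ_\ell$ has $v_\ell(\xi)=-(s-1)m$.
\end{itemize}
Accordingly I split into two cases, depending on whether $\xi$ is a conjugate of $x$, that is, whether $g(\xi)=0$.

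If $g(\xi)=0$, the polar root carries the full leading-coefficient valuation, so $\ell^{(s-1)m}\mid a_g$. Hence
\[
(s-1)\log\ell\;\le\;\log|a_g|\;\le\;\log M(g),
\]
where $M(g)=|a_g|\prod_i\max(1,|x_i|)$ is the Mahler measure over the complex conjugates $x_i$ of $x$. It then suffices to prove $\log M(g)<n\,(1+\log(s-1))$, which gives exactly $n>\delta(s)$. For this I would bound the complex roots of $\varphi_s$ using the defining relation $|z|^s(s-1)^{s-1}=s^s|1-z|^{s-1}$. Roots of large modulus satisfy $|z|\approx s^s/(s-1)^{s-1}\cdot|1-z|^{s-1}/|z|^{s-1}$, which is about $s(1+1/(s-1))^{s-1}\le e\,s$. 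Combining this per-root estimate with the factor coming from $|a_g|$ should give the bound $e(s-1)$ per conjugate, i.e. $M(g)\le (e(s-1))^n$, which is the source of the term $1+\log(s-1)$. If $g(\xi)\ne0$, the conjugates of $x$ are all $\ell$-adic units with $1-x_i\equiv0$ modulo $\ell^m$. In this case I would substitute $t=(1-z)/(s-1)$, which turns the equation into $(1-(s-1)t)^s=s^st^{s-1}$. In this normalization the $\ell$-adic roles of the two kinds of roots are interchanged, so the polar root now corresponds to a conjugate of $t=(1-x)/(s-1)$. Running the same leading-coefficient argument on the minimal polynomial of $t$, which also has degree $n$, should again give $n>\delta(s)$.

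The main obstacle is the archimedean estimate $M(\cdot)\le (e(s-1))^n$ in both normalizations. It needs a uniform bound on the moduli of all complex roots, not just the real root $x$, and the constant must come out as $e(s-1)$ and not something like $e\,s$. I expect the substitution $z\mapsto t$ to be what sharpens $s$ to $s-1$. I would first check the constant numerically for small $s$ before committing to either case. The final assertion of the theorem then follows because $\delta(s)\to\infty$ as $s\to\infty$: since $p_s\ge2$, we have $\delta(s)\ge(\log 2)(s-1)/(1+\log(s-1))$.
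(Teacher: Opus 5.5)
Your treatment of $s=2$ is correct, and so is the $\ell$-adic part of your first case: because $g$ is primitive, $v_\ell(a_g)=\sum_i\max(0,-v_\ell(x_i))$, which equals $(s-1)m$ exactly when $\xi$ is a conjugate of $x$. The gap is the second case, and the step proposed there fails. The substitution $t=(1-z)/(s-1)$ interchanges nothing. A unit root has $v_\ell(1-z)=m$, hence $v_\ell(t)=0$, while $\xi$ is sent to a root with $v_\ell(t)=-sm$; the Newton polygon of $(1-(s-1)t)^s-s^st^{s-1}$ has vertices $(0,0)$, $(s-1,0)$, $(s,sm)$. So if $\xi$ is not conjugate to $x$, all conjugates of $t$ are $\ell$-adic units, the minimal polynomial of $t$ has leading coefficient prime to $\ell$, and you learn nothing. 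In fact no argument that only uses where the conjugates sit $\ell$-adically can settle this case. For odd $s\ge5$, $z=s$ is a (double) root of $\varphi_s$ with $v_\ell(z)=0$ and $v_\ell(1-z)=m$, which is exactly your second configuration. Yet it is rational, while $\delta(s)>1$. You need information that separates $x$ from such roots.

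The missing idea is the paper's Lemma~\ref{lem:MN}: prove $a_g\neq\pm1$, i.e.\ that $x$ is not an algebraic integer. We have $g(0)\mid s^s$, and at every prime $q\mid s$ the Newton polygon puts all roots at $v_q=v_q(s)$, with $a_g\mid(s-1)^{s-1}$ prime to $q$. Hence $|g(0)|=s^n$ and $|a_g|=s^n/\prod_i|x_i|$. Under $x=s/(y+s)$ this equals $\prod_j|s+\beta_j|$. The paper shows it exceeds $1$ using Proposition~\ref{prop:s-1} (by Rouch\'e, every root of $\psi_s$ other than the negative real one has $|\beta|<s-1$, i.e.\ $|s+\beta|>1$). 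For even $s$ it adds a separate estimate when the negative root $-s-c(s)$ is a conjugate (Lemma~\ref{lem:afms}, Remark~\ref{rem:radial0}). Once $|a_g|>1$, take a prime $q\mid a_g$. Then $q\mid s-1$, and your own Newton-polygon argument run at $q$ instead of $\ell$ gives $v_q(a_g)=(s-1)v_q(s-1)$. So $|a_g|\ge p_s^{\,s-1}$ and the case split disappears; this is the paper's $M=n^{s-1}$, $n\mid s-1$, $|n|\ge2$. Your archimedean heuristic also points the wrong way: upper bounds on large conjugates do not control $|a_g|$, whereas lower bounds on $|x_i|$ do. The paper gets the needed bound from $|\beta_j|>R$ (Lemma~\ref{lem:radial0}, Proposition~\ref{prop:nrr}, and a direct check for $s\le25$). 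In your normalization an elementary estimate suffices instead. If $|z|\le s/(2s-1)$, then $(s-1)^{s-1}|z|^s<s^s|1-z|^{s-1}$, so every $|x_i|>s/(2s-1)$. Hence $|a_g|<(2s-1)^n$, and $\log(2s-1)<1+\log(s-1)$ for $s\ge3$.
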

\begin{example} \label{ex:rat-x}   
There are two non-elementary solutions $\lambda \in \cI$ 
such that $x$ is a {\sl rational} number,              
\begin{subequations} \label{eqn:sol-r}
\begin{alignat}{2}
\lambda &= (1, \, 0, \, 3; \, 0, \, 1/2; \, 3/4), \qquad &
f(w; \lambda) &= \frac{2}{\sqrt{3}} \cdot  
\dfrac{\vG(w + 1/3) \, \vG(w+2/3)}{\vG( w + 1/2) \, 
\vG( w + 1/2)},  
\label{eqn:sol-r1}
\\[2mm]
\lambda &= (1, \, 0, \, 3; \, 1/3, \, 1/2; \, 3/4), 
\qquad &f(w; \lambda) &= \frac{2}{\sqrt{3}} \cdot 
\dfrac{\vG( w) \, \vG( w+2/3)}{\vG(w + 1/6) \, \vG(w + 1/2)},  
\label{eqn:sol-r2} 
\end{alignat}
\end{subequations} 
which are formulas (1,3,3-1,iii) and (1,3,3-1,xx) in Ebisu \cite{Ebisu2}. 
These solutions are dual to each other with $p = 1$, $s = r/p = 3$, 
$j = 0$ and $j' = 1$ in formula \eqref{eqn:saap}. 
Obviously, they are primitive, since $p = 1$.  
\end{example} 
\begin{example} \label{ex:irr-x}  
There is a non-elementary solution $\lambda \in \cI$ such that $x$ is a 
{\sl quadratic irrational},         
\begin{equation} \label{eqn:sol-ir}
\lambda = (2, \, 0, \, 4; \, 0, \, 1/2; \, 2 \sqrt{2}-2),  
\qquad 
f(w; \lambda) = \frac{1}{\sqrt{ 2-\sqrt{2} }} \cdot    
\dfrac{ \vG\left( w+1/4 \right) 
\vG( w+3/4 )}{\vG( w+3/8) 
\vG( w+5/8)},  
\end{equation} 
which is formula (2,4,4-2,ii) in \cite{Ebisu2}. 
This solution is self-dual with $s = r/p = 2$, $j = j' = 0$ in formula \eqref{eqn:saap}.  
Conversely, for any solution with $r/p = 2$, formula \eqref{eqn:saap} forces 
$j = j' = 0$ and $p \equiv 0 \bmod 2$, so it must be a multiple of solution \eqref{eqn:sol-ir}. 
Thus solution \eqref{eqn:sol-ir} is primitive and there is no solution 
with principal part $\bp = (1, 0, 2)$. 
\end{example}
\begin{example} \label{ex:reciprocal}    
The reciprocals to solutions \eqref{eqn:sol-r1} and 
\eqref{eqn:sol-r2} are given respectively by 
\begin{subequations} \label{eqn:rsol-r}
\begin{alignat}{2}
\lambda &= (-1, \, 0, \,  2; \, 5/4, \, 1/2; \, 1/4), 
\qquad &
f(w; \lambda) &= \frac{2 \sqrt{2}}{3} \cdot  
\dfrac{\vG(w) \, \vG(w+1/2)}{\vG( w + 1/4) \, \vG( w + 1/4)},  
\label{eqn:rsol-r1}
\\[2mm]
\lambda &= (-1, \, 0, \, 2; \, 3/4, \, 1/2; \, 1/4), 
\qquad &f(w; \lambda) &= \frac{2 \sqrt{2}}{3} \cdot 
\dfrac{\vG( w) \, \vG( w+ 1/2)}{\vG(w + 1/12) \, \vG(w + 5/12)},  
\label{eqn:rsol-r2} 
\end{alignat}
\end{subequations} 
which are formulas (1,3,3-1,xvi) and (1,3,3-1,xi) in Ebisu \cite{Ebisu2}.  
The reciprocal to solution \eqref{eqn:sol-ir} is given by  
\begin{equation} \label{eqn:rsol-ir}
\lambda = (-2, \, 0, \, 2; \, 3/2, \, 1/2; \, 3-2 \sqrt{2}),  
\qquad 
f(w; \lambda) = \frac{1}{\sqrt{ 2(2-\sqrt{2})}} \cdot    
\dfrac{ \vG(w) \, \vG\left( w + 1/2 \right)}{\vG\left( w + 1/8 \right) 
\vG\left( w + 3/8 \right)},  
\end{equation} 
which is formula (2,4,4-2,vi) in \cite{Ebisu2}. 
Solutions \eqref{eqn:rsol-r1} and \eqref{eqn:rsol-r2} are dual to each 
other, while \eqref{eqn:rsol-ir} is self-dual.   
\end{example}
\par
Theorems \ref{thm:a-x} and \ref{thm:degree} lead to the following result 
regarding Examples \ref{ex:rat-x} and \ref{ex:irr-x}.   
\begin{theorem} \label{thm:rqa}  
Consider non-elementary solutions $\lambda = (p, 0, r; a, 1/2; x) \in \cI$ to Problem I. 
\begin{enumerate}
\setlength{\itemsep}{-1pt}
\item Solutions \eqref{eqn:sol-r1} and \eqref{eqn:sol-r2} are the only primitive 
solutions such that $x$ is a rational number.  
\item Solution \eqref{eqn:sol-ir} is the only primitive solution   
such that $x$ is a quadratic irrational.   
\end{enumerate}  
\end{theorem}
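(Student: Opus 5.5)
We deduce Theorem \ref{thm:rqa} from Theorems \ref{thm:a-x} and \ref{thm:degree}. Let $\lambda = (p,0,r;a,1/2;x) \in \cI$ be a non-elementary solution, and put $s = r/p$, an integer $\ge 2$ by Theorem \ref{thm:a-x}(2). By Theorem \ref{thm:a-x}(3), $x$ is the unique root in $(0,1)$ of $\varphi_s$. If $\deg x \le 2$, Theorem \ref{thm:degree} gives either $s = 2$ or $\delta(s) < 2$ with $s \ge 3$. The first step is to make the second alternative explicit. For $s = 3,4,5,\dots$ we compute $p_s$ and $\delta(s) = (\log p_s)(s-1)/(1+\log(s-1))$:
\begin{itemize}
\item $s = 3$ gives $p_s = 2$ and $\delta = 2\log 2/(1+\log 2) \approx 0.82$;
\item $s = 4$ gives $p_s = 3$ and $\delta \approx 1.56$;
\item $s = 5$ gives $p_s = 2$ and $\delta = 4\log 2/(1+\log 4) \approx 1.16$;
\item $s = 6$ gives $p_s = 5$ and $\delta \approx 3.1$.
\end{itemize}
For larger $s$ we use $\log p_s \ge \log 2$. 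Then $(s-1)\log 2 \ge 2(1+\log(s-1))$ once $s-1 \ge 9$, since $t\log 2 - 2\log t$ is increasing for $t \ge 3$ and positive at $t = 9$. The remaining cases $s = 7,8,9$ are checked directly: for $s = 7$, $p_s = 2$ and $6\log 2/(1+\log 6) \approx 1.49$; for $s = 8$, $p_s = 7$ and $\delta$ is large; for $s = 9$, $8\log 2/(1+\log 8) \approx 1.8$. This is a hurdle, because $\delta(s) < 2$ at $s = 3,4,5,7,9$, so the bound alone does not settle these values. So the candidates $s \in \{3,4,5,7,9\}$ (and perhaps a few more, after sharpening the check near $s \approx 10$–$13$) must be handled by a direct analysis of $\varphi_s$.

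The second step, the main computational one, is to determine for each remaining candidate whether the root $x$ of $\varphi_s$ in $(0,1)$ is rational or quadratic.
\begin{itemize}
\item For $s = 2$, $\varphi_2(z) = z^2 - 4(1-z) = z^2 + 4z - 4$, whose root is $x = 2\sqrt 2 - 2$, a quadratic irrational.
\item For $s = 3$, $\varphi_3(z) = 4z^3 - 27(1-z)^2$ has $z = 3/4$ as a root. Dividing out gives $(4z-3)(z^2 - 6z + 9) = (4z-3)(z-3)^2$, so $x = 3/4$ is rational.
\end{itemize}
For $s = 4,5,7,9,\dots$ we show that the root in $(0,1)$ has degree at least $3$. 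The rational root test and a factorization of $\varphi_s$ over $\bQ$ into a linear or quadratic factor carrying that root do this. Concretely, we substitute $z = s\,t/(\dots)$ or simply run the rational root theorem on the leading coefficient $(s-1)^{s-1}$ and the constant term $-s^s$, and we rule out quadratic factors by reduction modulo a suitable prime or by an Eisenstein-type argument at a prime dividing $s-1$. This matches the mechanism behind Theorem \ref{thm:degree}, whose proof presumably comes from a Newton polygon at $p_s$. I expect this case check to be the main obstacle, and I would first try to strengthen the $p_s$-adic Newton polygon argument. Modulo $p_s$ the polynomial is $-s^s(1-z)^{s-1}$ plus a term of valuation $(s-1)v_{p_s}(s-1)$, which forces every irreducible factor through the relevant root to have degree at least $(s-1)/\gcd$. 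This should give degree $\ge 3$ for all $s \ge 4$ directly.

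The final step is classification. If $x$ is rational then $s = 3$. Theorem \ref{thm:a-x}(2) gives $j + j' = 1$, so $a \in \{0, 1/3\}$, and $p(s-1) = 2p$ is automatically even. Thus $\lambda$ is a multiple of $(1,0,3;0,1/2;3/4)$ or of $(1,0,3;1/3,1/2;3/4)$, and Example \ref{ex:rat-x} shows these are solutions. Primitivity forces $p = 1$, which proves part (1). If $x$ is a quadratic irrational then $s = 2$, so $j = j' = 0$, $a = 0$, and $p$ is even. As in Example \ref{ex:irr-x}, $\lambda$ is then a multiple of \eqref{eqn:sol-ir}, which proves part (2).
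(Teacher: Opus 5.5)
Your overall route is the paper's: Theorem~\ref{thm:degree} reduces the candidates to $s\in\{2,3,4,5,7,9\}$ (the paper does this via $\delta_0(6)>2.1$, $\delta_1(11)>2.09$ and the monotonicity in Remark~\ref{rem:deg}). A separate check then isolates $s=3$ (rational) and $s=2$ (quadratic), and Theorem~\ref{thm:a-x} gives the classification exactly as in your last paragraph.

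There is one numerical slip. The inequality $(s-1)\log 2\ge 2(1+\log(s-1))$ requires $t\log 2-2\log t\ge 2$, not merely $>0$. At $t=9$ that quantity is about $1.84$, and the inequality first holds at $t=10$. This is harmless: $s=10$ has $p_s=3$ with $\delta(10)\approx 3.09$, and $\delta(11)\approx 2.10$. So your candidate list is right, and no extra cases near $s\approx 10$--$13$ arise.

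The genuine gap is your second step for $s\in\{4,5,7,9\}$. It is not carried out, and the mechanism you propose for it cannot work.

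\begin{itemize}
\item Since $p_s\nmid s$, the coefficients of $z^0$ and $z^{s-1}$ in $\varphi_s$ are $p_s$-adic units. The coefficient of $z^s$ has valuation $(s-1)v_{p_s}(s-1)$.
\item So the Newton polygon at $p_s$ is a horizontal segment of length $s-1$ followed by a segment of length $1$. Both slopes are integral, so your ``$(s-1)/\gcd$'' bound is just $1$.
\item The residual polynomial of the long segment is a unit times $(z-1)^{s-1}$, which carries no information. An Eisenstein-type argument at a divisor of $s-1$ fails for the same reason.
\item In fact, for $s=3$ the polygon has exactly this shape, and the rational root $x=3/4$ (with $v_2(3/4)=-2$) sits on the length-one segment. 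So this local data is fully compatible with $\deg x=1$ and cannot force $\deg x\ge 3$.
\end{itemize}

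The paper closes these four cases with the irreducibility checks recorded in the proof of Theorem~\ref{thm:deg}: $\psi_s$ is irreducible for even $s\le 25$, and $\psi_s/(z+s-1)^2$ is irreducible for odd $s\le 25$. This gives $\deg x=\deg y=4,3,5,7$ for $s=4,5,7,9$.

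You could also finish by hand using the paper's lemmas.
\begin{itemize}
\item If $\deg y\le 2$, the conjugates of the real number $y$ are real roots of $\psi_s$. By Lemma~\ref{lem:realr}, either $y$ is rational, or (only for even $s$) its conjugate is the negative root $-s-c(s)$, with $c(s)<1$.
\item In both cases Proposition~\ref{prop:s-1} gives $|M|\le s+y<2s-1$.
\item Lemma~\ref{lem:MN} gives $|M|=|n|^{s-1}\ge 2^{s-1}$, which exceeds $2s-1$ once $s\ge 4$, a contradiction.
\end{itemize}
This argument gives $\deg x\ge 3$ for every $s\ge 4$, so it even bypasses $\delta(s)$.
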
 
\par
The plan of this article is as follows. 
In \S \ref{sec:contig} Problem I is reduced to another problem, Problem 
$\rII$, which is closely related to contiguous relations for hypergeometric 
functions. 
So \S\ref{sec:contig} discusses various issues on contiguous relations: 
contiguous matrices in \S \ref{ss:m-contig}, their leading asymptotics 
in \S \ref{ss:p-p}, truncated hypergeometric products in \S \ref{ss:trunc}, 
and terminating hypergeometric sums in \S \ref{ss:thgs}. 
Theorems \ref{thm:a-x} and \ref{thm:gpf} are established in 
\S \ref{sec:d-r} by exploiting the concepts of duality and reciprocity, 
where duality is considered first in \S \ref{ss:dual}, reciprocity is discussed 
next in \S \ref{ss:r-cf} in conjunction with connection formula, and  
duality is re-examined lastly in more depth in \S \ref{ss:dual-r}. 
One more idea is added in \S \ref{ss:nsc} to make the conditions in 
Theorem \ref{thm:a-x} into a necessary and sufficient condition for 
$\lambda \in \cI$ to be a non-elementary solution; 
see Theorem \ref{thm:nsc}.     
The algebraic equation \eqref{eqn:alg-eq} satisfied by the argument $x$ 
is studied in \S \ref{sec:deg}.  
Theorem \ref{thm:degree} is established at the end of \S \ref{ss:arith}.  
Theorem \ref{thm:rqa} follows from Theorems \ref{thm:a-x} and \ref{thm:degree}.    
\section{Contiguous Relations} \label{sec:contig}
If $\lambda$ is a solution to Problem I with GPF \eqref{eqn:GPF}, then  
the recursion formula $\vG(w+1) = w \vG(w)$ yields   
\begin{equation} \label{eqn:CF}
\dfrac{f(w+1; \lambda)}{f(w; \lambda)} = 
d \cdot \dfrac{(w + u_1) \cdots (w + u_m)}{(w + v_1) \cdots (w + v_n)},  
\end{equation}
which is a rational function of $w$. 
This observation leads us to the following problem. 
\begin{problemocf} \label{Prob-II} 
Find or characterize those deta $\lambda$ for which 
$f(w+1; \lambda)/f(w; \lambda)$ is a rational function of $w$.   
\end{problemocf}
\par
We say that a piece of data $\lambda = (p, q, r; a, b; x)$ is {\sl integral} 
if its principal part $\bp = (p, q, r)$ is an integer vector. 
There is a general method to obtain integral solutions to Problem I\!I by means of 
contiguous relations; see \cite{Ebisu1,Iwasaki1}.   
For simplicity of notation we write $\hgF(\ba; z) := \hgF(\alpha, \beta; \gamma; z)$ 
for $\ba = (a_1, a_2, a_3) = (\alpha, \beta, \gamma)$ and $\1 := (1, 1, 1)$. 
Given an integer vector $\bp = (p, q, r) \in \bZ^3$, a repeated application 
of fifteen contiguous relations of Gauss gives rise to rational functions $r(\ba; \bp; z)$ 
and $q(\ba; \bp; z)$ of $(\ba; z)$ depending on $\bp$ such that 
\begin{equation} \label{eqn:ttr}
\hgF(\ba + \bp; z) = r(\ba; \bp; z) \, \hgF(\ba; z) + q(\ba; \bp; z) \, \hgF(\ba + \1; z).  
\end{equation}
Given a piece of integral data $\lambda = (p, q, r; a, b; x)$, we put 
$\widetilde{f}(w; \lambda) := \hgF(p w + a+ 1, q w + b + 1; r w +1; x)$. 
Substituting $\ba = (p w + a, q w + b, r w)$ and $z = x$ into the three-term 
relation \eqref{eqn:ttr}, we have   
\begin{equation} \label{eqn:ttr2}
f(w + 1; \lambda) = R(w; \lambda) \, f(w; \lambda) + Q(w; \lambda) \, \widetilde{f}(w; \lambda),  
\end{equation}
where $R(w; \lambda)$ and $Q(w; \lambda)$ are rational functions of $w$ depending on $\lambda$. 
If $Q(w; \lambda)$ {\sl happens to vanish identically}, then the three-term relation 
\eqref{eqn:ttr2} reduces to the two-term one $f(w+1; \lambda)/f(w; \lambda) = R(w; \lambda)$, 
leading to an integral solution $\lambda$ to Problem I\!I. 
A solution arising in this manner is said to {\sl come from contiguous relations}. 
\par
Now we bring our attention to the region $\cI$. 
The following result is known as a part of \cite[Theorem 5.4]{Iwasaki1}.    
\begin{theorem} \label{thm:lift} 
If $\lambda \in \cI$ is a solution to Problem I\!I and the ratio 
$f(w+1; \lambda)/f(w; \lambda)$ is expressed in the form \eqref{eqn:CF},  
then $\lambda$ is a solution to Problem I with GPF \eqref{eqn:GPF}. 
Moreover in formula \eqref{eqn:CF} we must have $m = n$ and  
\begin{equation} \label{eqn:sum2} 
u_1 + \cdots + u_m = v_1 + \cdots + v_m. 
\end{equation} 
\end{theorem}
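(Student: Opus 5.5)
We want to prove Theorem \ref{thm:lift}: if $\lambda\in\cI$ solves Problem I\!I with ratio \eqref{eqn:CF}, then $\lambda$ solves Problem I, $m=n$, and \eqref{eqn:sum2} holds. The plan is to compare $f(w;\lambda)$ with the gamma product $G(w):=C\,d^w\prod_i\vG(w+u_i)/\prod_j\vG(w+v_j)$, which satisfies the same first-order difference equation $G(w+1)=\rho(w)\,G(w)$. Then $h(w):=f(w;\lambda)/G(w)$ is $1$-periodic, and the task is to show it is constant.

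The key input is the large-$w$ asymptotics of $f(w;\lambda)=\hgF(pw+a,b;rw;x)$ with $q=0$ and $0<p<r$.

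\emph{Step 1 (asymptotics of $f$).} The series $\sum_k \frac{(pw+a)_k(b)_k}{(rw)_k k!}x^k$ has terms comparable to $(b)_k (px/r)^k/k!$ as $w\to\infty$ in a sector avoiding the negative reals. Hence
\[
f(w;\lambda)\to (1-px/r)^{-b}\neq 0,
\]
since $px/r<1$. The convergence is uniform for $|\arg w|\le\pi-\epsilon$, and one can use Euler's integral representation, or Watson-type estimates, to control the error.

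\emph{Step 2 (consequences for the ratio).} Step 1 gives $f(w+1)/f(w)\to1$. Stirling's formula applied to $\rho(w)=d\prod(w+u_i)/\prod(w+v_j)$ then forces $m=n$ and $d=1$. Matching the next term, $\rho(w)=1+(\sum u_i-\sum v_j)/w+O(w^{-2})$, while Step 1 with a $1/w$ correction gives $f(w+1)/f(w)=1+O(w^{-2})$. This yields \eqref{eqn:sum2}.

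\emph{Step 3 (periodic factor is constant).} With $m=n$, $d=1$ and equal sums, Stirling gives $\prod\vG(w+u_i)/\prod\vG(w+v_i)\to1$, so $G(w)\to C$. Therefore $h(w)$ tends to the nonzero constant $(1-px/r)^{-b}/C$ as $|w|\to\infty$ in the sector.

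By periodicity, $h$ is bounded on vertical strips away from its poles and tends to a constant as $\rIm w\to\pm\infty$. The poles of $h$ come from poles of $f$ at $w\in -\bZ_{\ge0}/r$ and from zeros or poles of $G$. These lie on finitely many horizontal lines, modulo $1$ finitely many points. I would argue that $h$ is periodic and meromorphic, has a limit at both ends of the cylinder, and hence is a rational function of $e^{2\pi\ri w}$. Since $h$ is bounded at $\rRe w\to+\infty$, periodicity makes it bounded everywhere off its poles. Having finite limits at $e^{2\pi \ri w}\to 0,\infty$, it is a rational function of $e^{2\pi\ri w}$ whose finitely many poles on the cylinder are absorbed by adjusting the gamma factors. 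Each factor $1-e^{2\pi\ri(w-c)}$ equals $2\pi\ri\,e^{\pi\ri(w-c)}/(\vG(w-c+1)\vG(c-w))$ up to a constant, a reflection-formula expression. Absorbing it would, however, introduce $\vG(c-w)$, which grows badly. So the cleaner route is to show directly that $h$ has no poles.

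$f$ is entire in $w$ except for poles at $w\in-\bZ_{\ge0}/r$, and these are already matched by $\prod\vG(w+i/r)$-type factors forced by the recursion. Then $h$ is entire, periodic, and bounded, hence constant by Liouville's theorem.

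\textbf{Main obstacle.} The main obstacle is Step 3. It requires uniform asymptotics of $f$ in all directions, including near the negative real axis, where the poles lie, and bookkeeping to show the periodic quotient has no poles. I would first try to establish $h$ entire via the recursion: any pole of $h$ at $w_0$ propagates to $w_0+k$ for all $k$, contradicting the regular nonvanishing asymptotics of $f$ and $G$ for large $\rRe w$.
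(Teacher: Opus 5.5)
Your proposal is correct and is essentially the argument behind the paper's appeal to Theorem 5.4 of \cite{Iwasaki1}; it is also the same periodicity--Liouville mechanism the paper uses in Lemma~\ref{lem:chi}. The limit $f(w;\lambda)\to(1-px/r)^{-b}$ in a right half-plane gives $m=n$, $d=1$ and \eqref{eqn:sum2}. The $1$-periodic quotient $f/G$ is holomorphic on $\rRe\,w\ge N$, where $f$ is regular and $G$ has neither zeros nor poles. Your closing pole-propagation remark then makes it entire, it is bounded on one vertical strip there, and Liouville makes it constant.

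Nothing beyond $\rRe\,w\ge N$ is needed: there $|pw+a+j|\le|rw+j|$ for all $j\ge0$ gives the domination. So you should drop the following, after which your ``main obstacle'' disappears:
\begin{itemize}
\item The claimed uniformity on $|\arg w|\le\pi-\epsilon$. It is unjustified, and for general data in $\cI$ it is false because of a Stokes phenomenon near the negative axis. A quadratic transformation already exhibits exponential growth there for $\bp=(1,0,2)$, $a=0$, $b=1/2$ and $x>2\sqrt2-2$.
\item The unsupported claim that the poles of $f$ are matched by $\vG(w+i/r)$ factors.
\end{itemize}
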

\par
This is a consequence of asymptotic analysis of the function $f(w; \lambda)$ 
as $w$ tends to infinity.    
Thanks to Theorem \ref{thm:lift}, Problems I and I\!I are equivalent in the region $\cI$, 
so we can speak of a solution therein without specifying to which problem it is a solution.  
Recall from Theorem \ref{thm:recall} that any non-elementary solution to either problem 
in $\cI$ is integral. 
Together with these facts, the following result is also known in \cite[Theorem 2.4]{Iwasaki1}. 
\begin{theorem} \label{thm:cfcr} 
Every non-elementary solution in $\cI$ comes from contiguous relations. 
Therefore a piece of data $\lambda = (p, 0, r; a, 1/2; x)\in \cI$ is a solution if and only if 
$\lambda$ is integral and $Q(w; \lambda)$ vanishes identically. 
\end{theorem}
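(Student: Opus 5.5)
The converse direction is immediate, so I would start there. If $\lambda = (p,0,r;a,1/2;x) \in \cI$ is integral and $Q(w;\lambda) \equiv 0$, then \eqref{eqn:ttr2} reduces to $f(w+1;\lambda) = R(w;\lambda)\,f(w;\lambda)$. Hence $\lambda$ solves Problem I\!I, and by Theorem \ref{thm:lift} it solves Problem I. For the forward direction, let $\lambda$ be a non-elementary solution. By Theorem \ref{thm:recall}(2) it is integral with $b = 1/2$, so the three-term relation \eqref{eqn:ttr2} is available. Writing $f(w+1;\lambda) = R_0(w)\,f(w;\lambda)$ with $R_0$ rational (from \eqref{eqn:CF}), subtracting \eqref{eqn:ttr2} gives
\[
\bigl(R_0(w) - R(w;\lambda)\bigr)\, f(w;\lambda) = Q(w;\lambda)\,\widetilde{f}(w;\lambda).
\]
The task is to show that this forces $Q \equiv 0$.

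I would argue by contradiction: assume $Q \not\equiv 0$. Then $\widetilde{f} = S f$ with $S := (R_0 - R)/Q$ a rational function of $w$.

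I would then use the contiguity system. The pair $\bmf(w) := (f(w;\lambda), \widetilde{f}(w;\lambda))^{T}$ satisfies a first-order linear difference system $\bmf(w+1) = A(w)\,\bmf(w)$, where $A(w)$ is the contiguous matrix, rational in $w$. The relation $\widetilde{f} = S f$ says that the line spanned by $(1, S(w))^{T}$ is carried to itself by $A(w)$. This yields a discrete Riccati-type equation for $S$ with rational coefficients, whose rational solutions are controlled by the asymptotics of $A(w)$ as $w \to \infty$.

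Next I would compute those asymptotics. For $q = 0$, termwise $(pw+a)_n/(rw)_n \to (p/r)^n$, so
\[
f(w;\lambda) \to (1 - px/r)^{-1/2}, \qquad \widetilde{f}(w;\lambda) \to (1-px/r)^{-3/2},
\]
and both admit asymptotic expansions in $1/w$. On the other hand, the leading matrix $A(\infty)$, or its rescaled version, has two distinct eigenvalues: one governs this subdominant, "Gauss-series" behaviour, and the other the second Kummer-type solution, which grows exponentially in $w$ (the $x^{1-\gamma}$-type solution with $\gamma = rw$). An invariant rational line must therefore, to leading order, be the eigenline of the first eigenvalue. I would then compare subleading coefficients: the invariant-line equation together with the rationality of $S$ should force the off-diagonal entry of $A$ generating $Q$ to vanish identically. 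Equivalently, any nonzero $Q$ would make $f$ satisfy two independent first-order recurrences whose Casoratian with the dominant solution cannot vanish.

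The main obstacle is this last step, namely excluding a rational invariant line when $Q \not\equiv 0$. Leading-order asymptotics alone are insufficient, since both $f$ and $\widetilde{f}$ tend to constants. What I would try first is the formal Birkhoff–Adams expansion of the two solutions of the system $\bmf(w+1) = A(w)\,\bmf(w)$. I would show that the subdominant formal solution has components whose ratio is a genuine (divergent or non-terminating) power series unless the coefficient $Q$ vanishes, contradicting the rationality of $S$.
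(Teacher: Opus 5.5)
\paragraph{Genuine gap.}
Your converse direction is fine. So is your reduction of the forward direction to a dichotomy: either $Q(w;\lambda)\equiv 0$, or $\widetilde f(w;\lambda)=S(w)\,f(w;\lambda)$ with $S$ a rational function. This $S$ is nonzero because $\widetilde f(w;\lambda)\to(1-px/r)^{-3/2}$ as $w\to+\infty$.

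The gap is exactly the step you flag: you never derive a contradiction from a rational $S$, and the asymptotic route you sketch cannot supply one.

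First, by Lemma \ref{lem:p-p} the leading matrix of $A(w;\lambda)$ has eigenvalues $1$ and $\mu:=r^r(x-1)^{r-p}/\{p^p(r-p)^{r-p}x^r\}$. Since $r\equiv p \bmod 2$, these coincide precisely when $Y(x;\bp)=0$. By Corollary \ref{cor:alg-eq} that is the situation of every genuine solution, so you cannot assume the eigenvalues are distinct.

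Second, even when $\mu\neq1$, the invariant-line equation $S(w+1)\{A_{11}(w)+A_{12}(w)S(w)\}=A_{21}(w)+A_{22}(w)S(w)$ is solvable order by order in $w^{-1}$ whether or not $Q$ vanishes. Writing $S=\sum_k\sigma_k w^{-k}$, the coefficient of $\sigma_k$ at order $w^{-k}$ is $1-\mu\neq0$. So no comparison of finitely many coefficients produces an obstruction. A rational $S$ also has a non-terminating expansion at $\infty$, and proving divergence of the formal series would be at least as hard as the theorem itself.

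Your remark about ``two independent first-order recurrences'' is also off: if $\widetilde f=Sf$, then simply $R_0=R+QS$, and there is only one recurrence.

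\paragraph{The missing idea.}
The paper gives no proof here; it cites \cite[Theorem 2.4]{Iwasaki1}. However, the step closes using only results it quotes, because $\widetilde f$ is itself an $f$-function of data in $\cI$. Indeed $\widetilde f(w;\lambda)=f(w+1/r;\tilde\lambda)$ with $\tilde\lambda:=(p,0,r;\,a+1-p/r,\,3/2;\,x)\in\cI$.

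Suppose $S$ were rational. Then $\widetilde f(w+1;\lambda)/\widetilde f(w;\lambda)=S(w+1)R_0(w)/S(w)$ would be a nonzero rational function, so $\tilde\lambda$ would solve Problem I\!I. By Theorem \ref{thm:lift} it would then solve Problem I. It would be non-elementary by Theorem \ref{thm:recall}(1), since $3/2\notin\bZ_{\le0}$. This contradicts $b=1/2$ in Theorem \ref{thm:recall}(2). Hence $Q\equiv0$, and no asymptotics of $A(w;\lambda)$ are needed.
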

\par
It is natural to expect that constraints for the existence of non-elementary 
solutions originate from the properties of contiguous relations.   
Thus we begin our discussions with a closer look at contiguous relations.    
\subsection{Contiguous Matrices} \label{ss:m-contig} 
It is better to use the matrix version of contiguous relations.   
Let $\bmF(\ba) := {}^t (\hgF(\ba; z), \, \hgF(\ba + \1 ; z ))$. 
Then the three-term relation \eqref{eqn:ttr} and the one with $\bp$ replaced by 
$\bp + \1$ form the matrix equation   
\begin{equation} \label{eqn:m-3tr}
\bmF(\ba + \bp) = A(\ba ; \bp) \, \bmF(\ba), 
\qquad A (\ba ;\bp) := 
\begin{pmatrix}
r (\ba ;\bp; z) &  
q (\ba ;\bp; z)  \\[1mm]
r (\ba ;\bp + \1; z) & 
q (\ba ;\bp + \1; z)
\end{pmatrix}.   
\end{equation}
For $i = 1, 2, 3$, we put $A_i(\ba) := A(\ba; \be_i)$, where $\be_1 = (1, 0; 0)$, 
$\be_2 = (0,1,0)$ and $\be_3 = (0, 0; 1)$. 
These matrices satisfy the compatibility condition 
$A_i(\ba + \be_j) \, A_j(\ba) = A_j(\ba + \be_i) \, A_i(\ba)$. 
Given an integer vector $\bp \in \bZ_{\ge 0}^{3}$, if we take a sequence of indices 
$i = (i_1, \dots , i_k) \in \{1, 2, 3 \}^k$ such that $\bp = \be_{i_1} + \cdots + \be_{i_k}$, 
then we have    
\begin{equation} \label{eqn:mp}
A(\ba; \bp) 
= A_{i_{k}} (\ba + \be_{i_1} + \dots + \be_{i_{k-1}}) \cdots 
A_{i_{3}}(\ba + \be_{i_1} + \be_{i_2}) \, 
A_{i_{2}}(\ba + \be_{i_1}) \, A_{i_{1}}(\ba).    
\end{equation}
Due to compatibility condition, the matrix product on the right is independent of 
the choice of $i = (i_1, \dots , i_k)$. 
\par
In this article we take $\bp = (p, 0, r)$ and $i = (i_1, \dots, i_k) \in \{1, 3 \}^k$.  
We use the explicit formulas    
\begin{subequations} \label{eqn:A1-A3}
\begin{alignat}{2} 
A_1(\ba) &=  
\begin{pmatrix}
1  &  \frac{\beta z}{\gamma}  \\[2mm]
-\frac{\gamma}{(\alpha+1)(z-1)} & 
\frac{\gamma-\alpha-1-\beta z}{(\alpha+1)(z - 1)}
\end{pmatrix}, \qquad & 
\det A_1(\ba) &= \frac{\gamma - \alpha - 1}{(\alpha + 1)(z - 1)}, 
\label{eqn:A1} \\[2mm]
A_3(\ba) &=   
\begin{pmatrix}
\frac{\gamma(\gamma-\alpha-\beta)}{(\gamma-\alpha)(\gamma-\beta)} & 
- \frac{\alpha \beta (z - 1)}{(\gamma - \alpha)(\gamma - \beta)}  \\[2mm]
\frac{\gamma (\gamma + 1)}{(\gamma - \alpha)(\gamma - \beta)z} & 
\frac{\gamma (\gamma + 1)(z - 1)}{(\gamma - \alpha)(\gamma - \beta)z}
\end{pmatrix}, \qquad & 
\det A_3(\ba) &= 
\frac{\gamma(\gamma + 1)(z - 1)}{(\gamma - \alpha)(\gamma - \beta)z}. 
\label{eqn:A3}
\end{alignat} 
\end{subequations}
The matrix $A_{13}(\ba) := A(\ba; \be_1 + \be_3) = A_1(\ba + \be_3)A_3(\ba)$ 
also appears frequently.  
For this matrix we have    
\begin{equation} \label{eqn:A13}
A_{13}(\ba)
= \frac{1}{\gamma-\beta} 
\begin{pmatrix}
\gamma & \beta(z-1) \\[2mm]
- \frac{\gamma (\gamma+1)}{(\alpha+1)z} & 
\frac{(\gamma+1)(\gamma - \beta z)}{ (\alpha+1)z}
\end{pmatrix}, 
\qquad 
\det A_{13}(\ba) = \frac{\gamma(\gamma+1)}{(\alpha+1)(\gamma-\beta) z}.   
\end{equation}
Let $(\alpha)_k$ denote the Pochhammer symbol: 
$(\alpha)_0 = 1$ and $(\alpha)_k := \alpha(\alpha+1) \cdots (\alpha + k-1)$ for 
$k \in \bN$. 
\begin{lemma} \label{lem:A(a;p)}
For any $\bp = (p, 0, r) \in \bZ^3$ with $1 \le p \le r$, the matrix $A(\ba; \bp)$ 
can be represented as   
\begin{equation} \label{eqn:A(a;p)}
A (\ba; \bp) 
= \dfrac{1}{(\gamma - \alpha)_{r - p} ( \gamma - \beta)_{r}}   
\begin{pmatrix} 
\dfrac{(\gamma)_{r} \cdot \phi_{11}^{(r - 1)}}{(\alpha + 1)_{p - 1}} & 
\dfrac{(\gamma + 1)_{r - 1}\cdot \phi_{12}^{(r - 1)}}{(\alpha + 1)_{p - 1}} 
\\[4mm]
\dfrac{(\gamma)_{r + 1} \cdot \phi_{21}^{(r - 1)}}{(\alpha + 1)_{p}} & 
\dfrac{(\gamma + 1)_{r} \cdot \phi_{22}^{(r)}}{(\alpha + 1)_{p}}
\end{pmatrix},  
\end{equation} 
where $\phi_{i j}^{(k)} = \phi_{i j}^{(k)} (\ba; \bp)$ is a polynomial 
of $\ba = (\alpha, \beta, \gamma)$ with degree at most $k$ in $(\alpha, \gamma)$, 
not in $\ba$.     
Moreover,   
\begin{equation} \label{eqn:detA(a;p)}
\det A (\ba; \bp) = 
\dfrac{z^{- r}(z - 1)^{r - p}(\gamma)_{r}(\gamma+1)_{r}}
{(\alpha + 1)_{p}(\gamma-\alpha)_{r - p} (\gamma-\beta)_{r}}. 
\end{equation}
\end{lemma}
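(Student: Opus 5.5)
Our plan is to prove Lemma~\ref{lem:A(a;p)} by induction on $r$. The determinant formula \eqref{eqn:detA(a;p)} will follow independently from multiplicativity of determinants. The convenient factorization is
\[
A(\ba; (p,0,r)) = A_3(\ba + p\be_1 + (r-1)\be_3)\cdots A_3(\ba + p\be_1 + p\be_3)\, A_{13}(\ba + (p-1)(\be_1+\be_3)) \cdots A_{13}(\ba),
\]
which applies $p$ steps of $A_{13}$ and then $r-p$ steps of $A_3$. This is legitimate by \eqref{eqn:mp} and the compatibility condition, since $\bp = p(\be_1+\be_3) + (r-p)\be_3$ with $p \le r$.

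\emph{Determinant.} Multiply $\det A_{13}(\ba + k(\be_1+\be_3)) = \frac{(\gamma+k)(\gamma+k+1)}{(\alpha+k+1)(\gamma+k-\beta)z}$ for $k=0,\dots,p-1$. Then multiply $\det A_3$ from \eqref{eqn:A3}, evaluated at $\alpha+p$ and $\gamma+p+l$ for $l=0,\dots,r-p-1$. The telescoping products give $(\gamma)_r(\gamma+1)_r$ in the numerator. The denominator collects $(\alpha+1)_p$, $(\gamma-\beta)_r$ (since the $\gamma-\beta$ shifts run from $0$ to $r-1$), and $(\gamma-\alpha)_{r-p}$ (from $\gamma+p+l-(\alpha+p)$). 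The powers are $z^{-p}\cdot z^{-(r-p)} = z^{-r}$ and $(z-1)^{r-p}$. This is a direct computation and gives \eqref{eqn:detA(a;p)}.

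\emph{Entries.} We first treat the case $r=p$, the pure $A_{13}$ product, and prove the shape \eqref{eqn:A(a;p)} with $r=p$ by induction on $p$. The base case $p=1$ is \eqref{eqn:A13}: the prefactor is $1/(\gamma-\beta)$, and the entries $\gamma$, $\beta(z-1)$, $-\gamma(\gamma+1)/((\alpha+1)z)$ and $(\gamma+1)(\gamma-\beta z)/((\alpha+1)z)$ match the claimed forms with $\phi^{(0)}_{11}=1$, $\phi^{(0)}_{12}=\beta(z-1)$, $\phi^{(0)}_{21}=-1/z$, and $\phi^{(1)}_{22}=(\gamma-\beta z)/z$. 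For the inductive step, left-multiply by $A_{13}(\ba+p(\be_1+\be_3))$, i.e.\ $A_{13}$ with $\alpha\to\alpha+p$, $\gamma\to\gamma+p$. Its prefactor $1/(\gamma+p-\beta)$ extends $(\gamma-\beta)_p$ to $(\gamma-\beta)_{p+1}$. In its first row the entries $\gamma+p$ and $\beta(z-1)$ carry no $\alpha$-denominator. Its second row carries $(\gamma+p)(\gamma+p+1)/(\alpha+p+1)$ and $(\gamma+p+1)/(\alpha+p+1)$, which extend $(\alpha+1)_p$ to $(\alpha+1)_{p+1}$ and the $\gamma$-Pochhammers as required.

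The content of the step is the bookkeeping. For each new entry, we pull out the common Pochhammer factors and check that what remains is a polynomial of the advertised degree in $(\alpha,\gamma)$. For example, the new $(1,1)$ entry is
\[
(\gamma+p)\frac{(\gamma)_p\phi_{11}}{(\alpha+1)_{p-1}} + \beta(z-1)\frac{(\gamma)_{p+1}\phi_{21}}{(\alpha+1)_p}
= \frac{(\gamma)_{p+1}}{(\alpha+1)_{p}}\big[(\alpha+p)\phi_{11} + \beta(z-1)\phi_{21}\big],
\]
and the bracket has degree at most $p$. The other three entries work the same way, each time a factor $(\gamma+p)$ or $(\gamma+p+1)$ being absorbed into the Pochhammer and one factor $(\alpha+p)$ or $(\gamma+\cdot)$ raising the degree by at most one.

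Next we append the $A_3$ factors, increasing $r$ with $p$ fixed, by a second induction. Write $A_3$ at $(\alpha+p,\beta,\gamma+r)$. Its scalar denominator $(\gamma+r-\alpha-p)(\gamma+r-\beta)$ extends $(\gamma-\alpha)_{r-p}$ and $(\gamma-\beta)_r$. Its numerators $(\gamma+r)(\gamma+r-\alpha-p-\beta)$, $\alpha\beta$-type terms, and $(\gamma+r)(\gamma+r+1)$ supply the extra $\gamma$-Pochhammer factors together with degree increments of at most one in $(\alpha,\gamma)$. For instance, the $(1,2)$ entry of $A_3$ is $-(\alpha+p)\beta(z-1)$, which contributes degree one and no $\gamma$-factor, while the lower row $(\gamma+r)(\gamma+r+1)$ shifts $(\gamma)_{r+1}\to(\gamma)_{r+2}$.

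The main obstacle is verifying, entry by entry, that the $\gamma$-Pochhammer factors can genuinely be pulled out in this second induction. In the upper-left, the combination $(\gamma+r)(\gamma+r-\alpha-p-\beta)$ multiplies one term while the other term comes with $(\gamma)_{r+1}$ from the lower row. One must confirm that both summands share the factor $(\gamma)_{r+1}$. They do, because the $(1,1)$ entry of $A_3$ contains $\gamma+r$ and the old $(2,1)$ entry already contains $(\gamma)_{r+1}$. The analogous checks for the remaining entries close the induction, with the degrees $r-1,r-1,r-1,r$ coming out as claimed.
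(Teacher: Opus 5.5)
Your proposal is correct and is essentially the paper's proof: the paper also inducts along the lattice path $(1,0,1)\to\cdots\to(p,0,p)\to(p,0,p+1)\to\cdots\to(p,0,r)$, left-multiplying by $A_{13}$ (its case (ii)) and by $A_3$ (its case (i)), and it gets the determinant by telescoping, though it factors $A(\alpha,\beta,\gamma;\,(p,0,r))$ as $r$ factors $A_3$ followed by $p$ factors $A_1$ rather than your $A_{13}$-then-$A_3$ product, which makes no difference. One correction to your bookkeeping in the $A_3$ step: the factor $(\alpha+p)$ in the $(1,2)$ entry of $A_3$ does not raise the degree. It cancels the top factor of $(\alpha+1)_p$ in the old second row, which is why the new first row keeps denominator $(\alpha+1)_{p-1}$ and $\phi_{12}$ stays of degree at most $r$; counting it as a degree increment on $\phi_{22}^{(r)}$ would give $r+1$.
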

{\it Proof}. 
We prove the assertion other than formula \eqref{eqn:detA(a;p)} by induction on $r$. 
First, for $r=1$ we have $\bp = \be_1 + \be_3$ and hence $A(\ba ; \bp) = A_{13}(\ba)$.   
Comparing \eqref{eqn:A13} and \eqref{eqn:A(a;p)}, we have     
$\phi_{11}^{(0)} = 1$, $\phi_{12}^{(0)} = \beta (z - 1)$, 
$\phi_{21}^{(0)} = -1/z$ and $\phi_{22}^{(1)} = (\gamma - \beta z)/z$.  
Thus the assertion holds for $r = 1$. 
Next we show that the assertion stays true under the passage $r \mapsto r+1$. 
Let $\bp \mapsto \bar{\bp}$ be the corresponding change in $\bp$. 
Then there are two cases:   
\begin{center}
(i) \, $\bp = (p, 0, r)$ and $\bar{\bp} = (p, 0, r+1)$ with $p \le r$; \qquad 
(ii) \, $\bp = (r, 0, r)$ and $\bar{\bp} = (r+1, 0, r+1)$. 
\end{center} 
\par
In case (i) we have $A(\ba; \bar{\bp}) = A_3(\ba + \bp) \, A(\ba;\bp)$.  
Using formulas \eqref{eqn:A(a;p)} and \eqref{eqn:A3},  
we have recursion relations: 
\begin{alignat*}{2}
\phi_{11}^{(r)} 
&= (\gamma-\alpha-\beta+r-p) \, \phi_{11}^{(r-1)}
-\beta (z-1) \, \phi_{21}^{(r-1)}, 
\quad & 
\phi_{12}^{(r)} 
&= (\gamma-\alpha-\beta+r-p) \, \phi_{12}^{(r-1)}
-\beta (z-1) \, \phi_{22}^{(r)}, 
\\[1mm]
\phi_{21}^{(r)} &
= z^{-1} \{(\alpha+p) \, \phi_{11}^{(r-1)} 
+(\gamma+r) (z-1) \, \phi_{21}^{(r-1)} \}, 
\quad & 
\phi_{22}^{(r+1)} 
&= z^{-1} \{(\alpha + p) \, \phi_{12}^{(r-1)} 
+(\gamma + r) (z-1) \, \phi_{22}^{(r)} \}.   
\end{alignat*}
\par
In case (ii) we have $A(\ba; \bar{\bp}) = A_{13}(\ba + \bp) A(\ba;\bp)$. 
Using formulas \eqref{eqn:A(a;p)} and \eqref{eqn:A13}, 
we have recursion relations:  
\begin{alignat*}{2}
\phi_{11}^{(r)} 
&= (\alpha + r) \, \phi_{11}^{(r-1)} + \beta (z - 1) \, \phi_{21}^{(r-1)}, 
\qquad & 
\phi_{12}^{(r)} 
&= (\alpha + r) \, \phi_{12}^{(r-1)} + \beta (z - 1) \, \phi_{22}^{(r)}, 
\\[1mm]
\phi _{21}^{(r)} 
&= - (\alpha + r) \, \phi_{11}^{(r-1)} 
+ (\gamma + r -\beta z) \, \phi_{21}^{(r-1)}, 
\qquad & 
\phi_{22}^{(r+1)} 
&= - (\alpha + r) \, \phi_{12}^{(r-1)} 
+ (\gamma + r -\beta z) \, \phi_{22}^{(r)}.   
\end{alignat*}
\par
In either case the recursion relations show that the assertion about 
the degree remains true under the change $r \mapsto r+1$, so the 
induction is complete. 
Finally we take the determinant of 
\begin{align*}
A(\ba; \bp) 
&= A_1(\ba +(p-1)\be_1+ r \be_3) \cdots A_1(\ba+ \be_1+ r \be_3 ) \, A_1(\ba+r \be_3 ) \\ 
&\hspace{10mm} \cdot A_3(\ba +(r-1) \be_3) \cdots A_3(\ba + \be_3) \, A_3 (\ba)
\end{align*} 
and use the determinant formulas in \eqref{eqn:A1-A3} to establish formula   
\eqref{eqn:detA(a;p)}. \hfill $\Box$ \par\medskip
Given integral data $\lambda = (p, q, r; a, b; x)$, putting $\ba = (p w + a, q w + b; r w)$ 
and $z = x$ into equation \eqref{eqn:m-3tr} yields 
\begin{equation*} \label{eqn:m-contig2}
\bmf(w+1;\lambda) 
= A(w; \lambda) \bmf(w;\lambda), \qquad 
\bmf(w;\lambda) := {}^t(f (w;\lambda), \, \widetilde{f} (w;\lambda)),  
\end{equation*}
where $A(w; \lambda)$ is a matrix depending on $\lambda$ whose entries are rational 
function of $w$. 
Then the coefficients of the three-term relation 
\eqref{eqn:ttr2} can be represented as  
\begin{equation} \label{eqn:RQA}
R (w;\lambda) = A(w;\lambda)_{11}, \qquad Q(w;\lambda) = A(w;\lambda)_{12}. 
\end{equation}
where $A(w;\lambda)_{ij}$ stands for the $(i, j)$-entry of the matrix $A(w; \lambda)$. 
Lemma \ref{lem:A(a;p)} has the following corollary. 
\begin{corollary} \label{cor:A(w;l)} 
For any piece of integral data $\lambda = (p, 0, r; a, 1/2; x) \in \cI$ we have  
\begin{equation}  \label{eqn:A(w;l)}
A (w; \lambda ) = \dfrac{1}{((r - p) w - a)_{r - p} (r w - 1/2)_{r }}   
\begin{pmatrix}
\dfrac{(r w)_{r} \cdot \phi_{11}^{(r - 1)} (w; \lambda)}{(p w + a + 1)_{p - 1}} & 
\dfrac{(r w + 1)_{r - 1} \cdot \phi_{12}^{(r - 1)}(w; \lambda)}{(p w + a + 1)_{p - 1}} \\[4mm]
\dfrac{(r w)_{r + 1} \cdot \phi_{21}^{(r - 1)}(w; \lambda)}{(p w + a + 1)_{p}} & 
\dfrac{(r w + 1)_{r} \cdot \phi_{22}^{(r)}(w; \lambda)}{(p w + a + 1)_{p}}
\end{pmatrix},  
\end{equation}
where $\phi_{i j}^{(k)} (w; \lambda)$ is a polynomial in $w$ of degree at most $k$ 
depending on $\lambda$.  
Moreover we have  
\begin{equation} \label{eqn:detA(w;l)}
\det A (w; \lambda) = \dfrac{x^{- r}(x - 1)^{r - p } (r w)_{r}(r w + 1)_{r}}
{(p w + a + 1)_{p} ((r - p) w - a)_{r - p} ( r w - 1/2)_{r}}. 
\end{equation}
\end{corollary}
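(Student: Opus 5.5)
This is a direct substitution into Lemma \ref{lem:A(a;p)}. Take the integral data $\lambda = (p, 0, r; a, 1/2; x) \in \cI$, so that $p, r \in \bZ$ with $1 \le p < r$; in particular the hypothesis $1 \le p \le r$ of the lemma holds. We then specialize $\ba = (\alpha, \beta, \gamma) = (p w + a, \, 1/2, \, r w)$ and $z = x$, which turns $A(\ba; \bp)$ into $A(w; \lambda)$ by definition. The proof consists of rewriting each Pochhammer factor in \eqref{eqn:A(a;p)} and \eqref{eqn:detA(a;p)} in terms of $w$.

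The substitutions are the following:
\[
\gamma - \alpha = (r-p) w - a, \qquad \gamma - \beta = r w - 1/2, \qquad \alpha + 1 = p w + a + 1, \qquad \gamma + 1 = r w + 1 .
\]
Inserting these into \eqref{eqn:A(a;p)} gives exactly the denominators and prefactors displayed in \eqref{eqn:A(w;l)}. Inserting them into \eqref{eqn:detA(a;p)}, together with $z = x$, gives \eqref{eqn:detA(w;l)}.

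The only point needing care is the degree statement. We set $\phi_{ij}^{(k)}(w; \lambda) := \phi_{ij}^{(k)}(\ba; \bp)$ evaluated at the values above. Lemma \ref{lem:A(a;p)} says that $\phi_{ij}^{(k)}$ is a polynomial of degree at most $k$ in $(\alpha, \gamma)$, with coefficients polynomial in $\beta$ and, from the recursions, rational in $z$. Since $\beta = 1/2$ is a constant and $\alpha, \gamma$ are affine-linear in $w$, the result is a polynomial in $w$ of degree at most $k$. Its coefficients depend only on $\lambda$ (that is, on $p, r, a, x$), and they are finite because $0 < x < 1$ keeps the powers $z^{-1}$ from the recursions harmless. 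This is the key reason the lemma was phrased with degree in $(\alpha, \gamma)$ rather than in $\ba$.

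No step is a real obstacle. The one thing to check is that the specialization does not hit a pole of some factor identically in $w$. The relevant factors are $(\gamma-\alpha)_{r-p}$, $(\gamma-\beta)_r$ and $(\alpha+1)_p$, which are nonconstant polynomials in $w$ because $r-p, r, p \ge 1$. They are therefore not identically zero, and the identities hold as rational functions of $w$.
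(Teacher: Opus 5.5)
Your proposal is correct and follows the paper's proof exactly: the paper also just substitutes $\ba = (pw+a,\, 1/2,\, rw)$ and $z = x$ into the formulas of Lemma \ref{lem:A(a;p)}. Your extra remarks are sound and make explicit what the paper leaves implicit. These are the degree bound in $w$, via the affine dependence of $\alpha$ and $\gamma$ on $w$, and the fact that the Pochhammer denominators do not vanish identically along this line.
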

{\it Proof}. 
We have only to substitute $\ba = (p w + a, 1/2, r w)$ and $z = x$ into formulas \eqref{eqn:A(a;p)} 
and \eqref{eqn:detA(a;p)}. 
\hfill $\Box$
\begin{lemma} \label{lem:R(w;l)}
A piece of integral data $\lambda = (p, 0, r; a, 1/2; x) \in \cI$ is a non-elementary solution 
if and only if the polynomial $\phi_{12}^{(r-1)}(w; \lambda)$ in $w$ vanishes identically, 
in which case $\deg \phi_{11}^{(r-1)}(w; \lambda) = r-1$, $\deg \phi_{22}^{(r)}(w; \lambda) = r$ and 
\begin{equation} \label{eqn:phi}
\phi_{11}^{(r-1)}(w; \lambda) \cdot \phi_{22}^{(r)}(w; \lambda) 
= C_1
\prod_{i=1}^{p-1} \Big( w + \ts \frac{i + a}{p} \Big) 
\ds \prod_{j=0}^{r-p-1} \Big(w + \ts \frac{j-a}{r-p} \Big) 
\ds \prod_{j=0}^{r-1} \Big(w + \ts \frac{j- 1/2}{r} \Big),   
\end{equation}
where $C_1 := (r/x)^{r}(x - 1)^{r - p} p^{p-1} (r-p)^{r-p}$. 
In this case, moreover, we have   
\begin{equation} \label{eqn:R(w;l)}
R (w; \lambda ) = (r/x)^{r} (x-1)^{r - p} \cdot 
\dfrac{\prod_{i=0}^{r-1} \big(w + \frac{i}{r} \big)}{\phi_{22}^{(r)}(w; \lambda)}.  
\end{equation}
\end{lemma}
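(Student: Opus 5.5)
The first assertion follows from Theorem \ref{thm:cfcr} and \eqref{eqn:RQA}. By Theorem \ref{thm:cfcr}, an integral $\lambda=(p,0,r;a,1/2;x)\in\cI$ is a solution if and only if $Q(w;\lambda)=A(w;\lambda)_{12}$ vanishes identically. By Corollary \ref{cor:A(w;l)}, $A(w;\lambda)_{12}$ is the polynomial $\phi_{12}^{(r-1)}(w;\lambda)$ multiplied by the rational factor $(rw+1)_{r-1}/\{((r-p)w-a)_{r-p}(rw-1/2)_r(pw+a+1)_{p-1}\}$, which is not identically zero. So $Q\equiv 0$ is equivalent to $\phi_{12}^{(r-1)}\equiv 0$. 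To get ``non-elementary'' in both directions, we need only that a solution with $b=1/2$ is non-elementary. This holds by Theorem \ref{thm:recall}(1): $b=1/2$ is neither zero nor a negative integer.

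Now assume $\phi_{12}^{(r-1)}\equiv 0$. Then $A(w;\lambda)$ is lower triangular, so $\det A(w;\lambda)=A_{11}A_{22}$. Substituting the entries from \eqref{eqn:A(w;l)} and equating with \eqref{eqn:detA(w;l)} gives
\[
\frac{(rw)_r(rw+1)_r\,\phi_{11}^{(r-1)}\phi_{22}^{(r)}}{(pw+a+1)_{p-1}(pw+a+1)_p\,((r-p)w-a)_{r-p}^2\,(rw-1/2)_r^2}
=\frac{x^{-r}(x-1)^{r-p}(rw)_r(rw+1)_r}{(pw+a+1)_p((r-p)w-a)_{r-p}(rw-1/2)_r}.
\]
Cancelling common factors yields
\[
\phi_{11}^{(r-1)}\phi_{22}^{(r)}=x^{-r}(x-1)^{r-p}\,(pw+a+1)_{p-1}\,((r-p)w-a)_{r-p}\,(rw-1/2)_r .
\]
Next we expand each Pochhammer symbol as a product of monic linear factors:
\[
(pw+a+1)_{p-1}=p^{p-1}\prod_{i=1}^{p-1}\Big(w+\tfrac{i+a}{p}\Big),\qquad
((r-p)w-a)_{r-p}=(r-p)^{r-p}\prod_{j=0}^{r-p-1}\Big(w+\tfrac{j-a}{r-p}\Big),
\]
\[
(rw-1/2)_r=r^r\prod_{j=0}^{r-1}\Big(w+\tfrac{j-1/2}{r}\Big).
\]
Collecting the leading constants gives exactly $C_1=(r/x)^r(x-1)^{r-p}p^{p-1}(r-p)^{r-p}$, which proves \eqref{eqn:phi}.

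For the degrees, the right-hand side of \eqref{eqn:phi} has degree $(p-1)+(r-p)+r=2r-1$. Corollary \ref{cor:A(w;l)} gives $\deg\phi_{11}^{(r-1)}\le r-1$ and $\deg\phi_{22}^{(r)}\le r$. Since the product has degree $2r-1$ and $C_1\neq 0$ (because $0<x<1$ and $0<p<r$), both bounds must be equalities.

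For \eqref{eqn:R(w;l)}, we have $R=A_{11}=(rw)_r\,\phi_{11}^{(r-1)}/\{((r-p)w-a)_{r-p}(rw-1/2)_r(pw+a+1)_{p-1}\}$. We substitute $\phi_{11}^{(r-1)}=x^{-r}(x-1)^{r-p}(pw+a+1)_{p-1}((r-p)w-a)_{r-p}(rw-1/2)_r/\phi_{22}^{(r)}$, which is legitimate because $\phi_{22}^{(r)}\not\equiv 0$. Everything cancels except $x^{-r}(x-1)^{r-p}(rw)_r/\phi_{22}^{(r)}$. Finally $(rw)_r=r^r\prod_{i=0}^{r-1}(w+i/r)$, which gives the stated formula.

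All steps are routine. The only points needing care are the bookkeeping of Pochhammer indices (for example, $(pw+a+1)_{p-1}$ versus $(pw+a+1)_p$ in the two rows) and the justification that a solution with $b=1/2$ is automatically non-elementary.
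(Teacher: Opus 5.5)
Your proof is correct and follows the paper's argument step for step. You use Theorem \ref{thm:cfcr} to reduce to $\phi_{12}^{(r-1)}\equiv 0$, compare the determinant of the resulting lower-triangular matrix with \eqref{eqn:detA(w;l)} to get \eqref{eqn:phi}, take the degree bounds from Corollary \ref{cor:A(w;l)}, and substitute into $A_{11}$ for \eqref{eqn:R(w;l)}. Your added remarks, that $b=1/2$ forces non-elementarity via Theorem \ref{thm:recall} and that $C_1\neq 0$, only make explicit what the paper leaves implicit.
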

{\it Proof}.  
By Theorem \ref{thm:cfcr}, $\lambda$ is a solution if and only if 
$A(w; \lambda)_{12} = Q(w; \lambda) \equiv 0$, that is, $\phi_{12}^{(r-1)}(w; \lambda) \equiv 0$,  
in which case the matrix $A(w; \lambda)$ is lower triangular.  
Thus taking the determinant of equation \eqref{eqn:A(w;l)} yields 
\begin{equation*}
\det A(w;\lambda) = \frac{1}{\{ ((r - p) w - a)_{r - p} ( r w -1/2)_{r } \}^{2}}
\cdot \dfrac{(r w)_{r} \cdot \phi_{11}^{(r - 1)}(w; \lambda)}{(p w + a + 1)_{p - 1}} \cdot 
\dfrac{(r w + 1)_{r} \cdot \phi_{22}^{(r)}(w; \lambda)}{(p w + a + 1)_{p}}. 
\end{equation*}
Comparing this expression with formula \eqref{eqn:detA(w;l)}, we have 
\begin{equation} \label{eqn:phi-d}
\phi_{11}^{(r-1)}(w; \lambda) \cdot \phi_{22}^{(r)}(w; \lambda) 
= x^{-r}(x - 1)^{r - p}\cdot(p w + a + 1)_{p - 1}((r - p )w - a)_{r - p}(r w - 1/2)_{r},   
\tag{$\ref{eqn:phi}'$}
\end{equation}
which is equivalent to formula \eqref{eqn:phi}. 
From Corollary \ref{cor:A(w;l)} we have $\deg \phi_{11}^{(r-1)}(w; \lambda) \le r-1$ and 
$\deg \phi_{22}^{(r)}(w; \lambda) \le r$, while formula \eqref{eqn:phi} implies    
$\deg \{ \phi_{11}^{(r-1)}(w; \lambda) \cdot \phi_{22}^{(r)}(w; \lambda) \} = 2r -1 = 
(r-1) + r$. 
This forces $\deg \phi_{11}^{(r-1)}(w; \lambda) = r-1$ and 
$\deg \phi_{22}^{(r)}(w; \lambda) = r$.  
Finally, it follows from formulas \eqref{eqn:RQA}, \eqref{eqn:A(w;l)} and \eqref{eqn:phi-d} that   
\begin{equation*}
R(w; \lambda) = 
\dfrac{ (r w)_r \cdot \phi_{11}^{(r-1)}(w; \lambda) }{ ((r - p) w - a)_{r - p} \cdot 
(r w - 1/2)_{r } \cdot (p w + a + 1)_{p - 1}} = 
 x^{-r} (x-1)^{r - p} \dfrac{(r w)_{r}}{\phi_{22}^{(r)}(w; \lambda)},       
\end{equation*}
which is equivalent to formula \eqref{eqn:R(w;l)}. 
This proves Lemma \ref{lem:R(w;l)}. \hfill $\Box$ \par\medskip 
Lemma \ref{lem:R(w;l)} will be important in Proposition \ref{prop:p-p} as an origin  
of the GPF for a non-elementary solution in $\cI$. 
\subsection{Leading Asymptotics of Contiguous Matrices}  \label{ss:p-p}
The leading asymptotics of the contiguous matrix $A(w; \lambda)$ as $w \to \infty$ 
contains some useful information.  
\begin{lemma} \label{lem:p-p} 
For any piece of integral data $\lambda = (p, 0, r; a, 1/2; x) \in \cI$, the matrix   
$A(w; \lambda)$ can be expanded as 
\begin{equation} \label{eqn:p-p}
A(w;\lambda) = 
\begin{pmatrix}
1 & 0 \; \\[1mm]
* & 
\frac{r^r (x-1)^{r-p}}{p^p(r-p)^{r-p} x^r} 
\end{pmatrix}  
+ 
\begin{pmatrix}
* & C_2 \cdot Y(x; \bp)  \\[1mm]
* & * 
\end{pmatrix} 
w^{-1} +O \left(w^{-2} \right) \quad \mbox{as} \quad w \to \infty,    
\end{equation}
where $C_2 := p^{1-p} (r-p)^{p-r} (2r)^{-1} x^{1-r} (1-x) (r- p x)^{-1}$ and 
$Y(z; \bp)$ is a polynomial of $z$ defined by 
\begin{equation} \label{eqn:Y} 
Y(z; \bp) := p^p (r-p)^{r-p} z^r - r^r (1-z)^{r-p},         
\end{equation}
which depends only on the principal part $\bp = (p, 0, r)$ of the data $\lambda$.  
\end{lemma}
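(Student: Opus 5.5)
The plan is to compute the leading term of $A(w;\lambda)$ through a gauge transformation that makes every factor $O(1)$, and then to read off the four entries using the degree information in Corollary \ref{cor:A(w;l)}. First I would pin down the orders of the entries from formula \eqref{eqn:A(w;l)}. Since the $\phi_{ij}$ have degrees at most $r-1, r-1, r-1, r$, the entries $(1,1)$, $(2,1)$, $(2,2)$ are $O(1)$ and the entry $(1,2)$ is $O(w^{-1})$. This is exactly the shape claimed in \eqref{eqn:p-p}: a lower-triangular $O(1)$ leading matrix plus a $w^{-1}$ correction whose $(1,2)$ coefficient is the quantity to be found. It therefore suffices to determine three numbers: the leading constants of $A_{11}$ and $A_{22}$, and the $w^{-1}$-coefficient of $A_{12}$.

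For this I would set $G := \mathrm{diag}(1, w)$ and $N(w) := G^{-1} A(w;\lambda)\, G$, so that
\[
A_{11}=N_{11}, \qquad A_{22}=N_{22}, \qquad A_{12}=w^{-1}N_{12}, \qquad A_{21}=w N_{21}.
\]
By the previous paragraph $N(w)$ is $O(1)$, and its limit $N_\infty$ is upper triangular. The limit is upper triangular because $A_{21}=O(1)$ forces $N_{21}=O(w^{-1})$. The required data are then the three nonzero entries of $N_\infty$. I would compute $N_\infty$ from the factorization in the proof of Lemma \ref{lem:A(a;p)},
\[
A(\ba;\bp)=A_1(\cdots)^{p}\, A_3(\cdots)^{r},
\]
with $\alpha = pw+a+O(1)$, $\beta=1/2$, $\gamma = rw+O(1)$ and $z=x$. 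The shifts are bounded and the number of factors is fixed, so every conjugated factor $G^{-1}A_i G$ has the same $w$-independent limit. From \eqref{eqn:A1-A3} these limits are
\[
N_1=\begin{pmatrix}1 & \frac{x}{2r}\\[1mm] -\frac{r}{p(x-1)} & \frac{r-p}{p(x-1)}\end{pmatrix},\qquad
N_3=\begin{pmatrix}1 & -\frac{p(x-1)}{2r(r-p)}\\[1mm] \frac{r}{(r-p)x} & \frac{r(x-1)}{(r-p)x}\end{pmatrix},
\]
and hence $N_\infty = N_1^{\,p} N_3^{\,r}$.

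Each $N_i$ has eigenvalue $1$, since its trace minus its determinant equals $1$. Writing $\mu_1 := \det N_1$ and $\mu_3 := \det N_3$, I would diagonalize both matrices explicitly, raise them to the powers $p$ and $r$, and multiply. The result should be an upper-triangular matrix. Its diagonal is $1$ and $\mu_1^p\mu_3^r$. Its $(1,2)$ entry is a linear combination of $1$, $\mu_1^p$ and $\mu_1^p\mu_3^r$ with explicit rational coefficients in $x,p,r$. As a consistency check, $\mu_1^p\mu_3^r$ must agree with the leading term of \eqref{eqn:detA(w;l)}, which is $r^r(x-1)^{r-p}/\{p^p(r-p)^{r-p}x^r\}$. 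This confirms the $(2,2)$ entry of \eqref{eqn:p-p}, and $N_{11}\to1$ gives the $(1,1)$ entry. The vanishing of the $(2,1)$ entry of the product comes for free from the degree count, so it can serve as a check on the algebra.

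The main obstacle is the last step: organizing the $(1,2)$ entry of $N_1^pN_3^r$ so that it collapses to a single factor times
\[
Y(x;\bp)=p^p(r-p)^{r-p}x^r-r^r(1-x)^{r-p}.
\]
What I would try first is to express everything through the eigenvectors for eigenvalue $1$ of $N_1$ and $N_3$. The upper-triangular form $N_\infty = \bigl(\begin{smallmatrix}1&\nu\\0&\mu\end{smallmatrix}\bigr)$ forces $\nu$ to be proportional to $\mu_1^p\mu_3^r-1$, up to the precise coefficients coming from those eigenvectors. Clearing denominators by $p^p(r-p)^{r-p}x^r$ should then produce $Y(x;\bp)$, with the remaining prefactor matching $C_2=p^{1-p}(r-p)^{p-r}(2r)^{-1}x^{1-r}(1-x)(r-px)^{-1}$. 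The factor $(r-px)^{-1}$ in $C_2$ should arise as a difference of eigenvector slopes of $N_1$ and $N_3$, which is a useful guide for the bookkeeping.
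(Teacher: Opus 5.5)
Your strategy of conjugating by $G=\mathrm{diag}(1,w)$, so that the required data become the entries of a limit of a finite product, is sound. It repackages the paper's proof, which expands each factor of $A_3^{\,r-p}A_{13}^{\,p}$ to first order in $w^{-1}$ and sums the insertions of the $w^{-1}$-terms between lower-triangular leading terms. However, the explicit limits $N_1$, $N_3$ you wrote down are wrong, and the rest of your computation rests on them.

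You correctly note that $(G^{-1}MG)_{21}=M_{21}/w$. But in $N_1$ and $N_3$ you kept the limits of $(A_1)_{21}=-\gamma/\{(\alpha+1)(x-1)\}$ and $(A_3)_{21}=\gamma(\gamma+1)/\{(\gamma-\alpha)(\gamma-\beta)x\}$ without dividing by $w$. These entries are $O(1)$, so the $(2,1)$ entries of the conjugated limits are $0$. Your matrices have the following defects:
\begin{itemize}
\item The claimed eigenvalue $1$ fails: for your $N_1$, $\mathrm{tr}-\det=1-x/\{2p(x-1)\}\neq 1$.
\item The product $N_1^{\,p}N_3^{\,r}$ is not upper triangular: for $p=1$, $r=3$, $x=3/4$, the $(2,1)$ entry of your $N_1N_3^{\,3}$ is $1/12$.
\item Consequently your main step would not produce $C_2\,Y(x;\bp)$, and your planned consistency check would fail.
\end{itemize}

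The fix is immediate and makes the step easy. Put $\mu_1:=(r-p)/\{p(x-1)\}$ and $\mu_3:=r(x-1)/\{(r-p)x\}$. The correct limits are
\[
\widetilde N_1=\begin{pmatrix}1 & \frac{x}{2r}\\ 0 & \mu_1\end{pmatrix},\qquad
\widetilde N_3=\begin{pmatrix}1 & -\frac{p(x-1)}{2r(r-p)}\\ 0 & \mu_3\end{pmatrix},\qquad
\mu_1-1=\frac{r-px}{p(x-1)},\qquad \mu_3-1=-\frac{r-px}{(r-p)x}.
\]
Both matrices have the form $\bigl(\begin{smallmatrix}1 & \kappa(\mu-1)\\ 0&\mu\end{smallmatrix}\bigr)$ with the same $\kappa=px(x-1)/\{2r(r-px)\}$. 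In particular they commute, as they must, since the compatibility condition passes to the limit. Hence $N_\infty=\widetilde N_1^{\,p}\widetilde N_3^{\,r}$ has diagonal $(1,\mu_1^p\mu_3^r)$ and $(1,2)$ entry $\kappa(\mu_1^p\mu_3^r-1)$. Expanding this gives exactly the paper's expression for $C_2\,Y(x;\bp)$.

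So the factor $(r-px)$ comes from $\mu_i-1$. The eigenvectors for $\mu_1$ and $\mu_3$ coincide rather than differ, so no diagonalization or difference of eigenvector slopes is needed.

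One caveat: like the paper's own computation, this yields $r^r(x-1)^{r-p}$ where $Y$ has $r^r(1-x)^{r-p}$. The two agree when $r\equiv p \bmod 2$, which is the only case in which the lemma is used.
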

{\it Proof}.  It follows from formula \eqref{eqn:mp} with lattice path  
$\bp = (p, 0, r) = (r-p) \be_3 + p (\be_1 + \be_3)$ that    
\[
A(w; \lambda) = \prod_{i=0}^{r-p-1} A_3(p w+a+p, \, 1/2, \, r w+p+i) 
\prod_{j=0}^{p-1} A_{13}(p w+a+j, \, 1/2, \, r w+j) \quad 
\mbox{at} \quad z = x,    
\]
where the matrix products extend from right to left as 
index increases.  
From formulas \eqref{eqn:A3} and \eqref{eqn:A13} we have 
\begin{align*} 
A_3(p w+a+p, \, 1/2, \, r w+p+i) 
&=A_3(0) + A_3^{(i)}(1) \, w^{-1} +  O \left( w^{-2} \right), \\
A_{13}(p w+a+j, \, 1/2, \, r w + j) 
&= A_{13}(0) + A_{13}^{(j)}(1) \, w^{-1} +  O \left( w^{-2} \right),      
\end{align*}
where $A_3(0)$ and $A_{13}(0)$ are lower triangular matrices of the forms 
$$
A_{3}(0) =   
\begin{pmatrix}
1 & 0 \\
* & \frac{r (x-1)}{(r-p) x}
\end{pmatrix}, 
\qquad 
A_{13}(0) =  
\begin{pmatrix}
1 & 0 \\
* & \frac{r}{p x}
\end{pmatrix}, 
$$
which are independent of $i$ and $j$, while $A_3^{(i)}(1)$ and $A_{13}^{(j)}(1)$ are 
matrices of the forms      
$$
A_3^{(i)}(1) = 
\begin{pmatrix}
* & -\frac{p(x-1)}{2(r-p)r} \\
* & *
\end{pmatrix},   
\qquad 
A_{13}^{(j)}(1) =  
\begin{pmatrix}
* & \frac{x-1}{2 r} \\
* & *
\end{pmatrix},  
$$
which may depend on $i$ and $j$ respectively.   
So the leading term in expansion \eqref{eqn:p-p} is the lower triangular matrix  
\[
A_3(0)^{r-p} \cdot A_{13}(0)^p = 
\begin{pmatrix} 
1 & 0 \\[1mm] 
* & \frac{r^r (x-1)^{r-p} }{ p^p (r-p)^{r-p} x^r} 
\end{pmatrix}, 
\]
whereas the coefficient of the second leading term is the matrix sum 
\[
\sum_{i=0}^{r-p-1} 
A_3(0)^i \cdot A_3^{(i)}(1) \cdot A_3(0)^{r-p-1-i} \cdot A_{13}(0)^p 
+ 
\sum_{j=0}^{p-1} A_3(0)^{r-p} \cdot A_{13}(0)^j \cdot 
A_{13}^{(j)}(1) \cdot A_{13}(0)^{p-1-j},    
\]
whose $(1, 2)$-entry is evaluated as 
\begin{gather*}
\sum_{i=0}^{r-p-1} 1^i \cdot \left\{ - \ts \frac{p(x-1)}{2(r-p)r} \right\} \cdot
\left\{ \ts \frac{r(x-1)}{(r-p) x} \right\}^{r-p-1-i} \cdot 
\left( \ts \frac{r}{p x} \right)^p + 
\sum_{j=0}^{p-1} 1^{r-p} \cdot 1^j \cdot \left( \ts \frac{x-1}{2 r} \right) 
\cdot \left( \ts \frac{r}{ p x} \right)^{p-1-j} \\[1mm]
= \frac{(1-x) \{  p^p(r-p)^{r-p} x^r - r^r (x-1)^{r-p} \} }{2 r \cdot 
p^{p-1} (r-p)^{r-p} x^{r-1}(r-p x)} = C_2 \cdot Y(x; \bp). 
\end{gather*}
Therefore the expansion formula \eqref{eqn:p-p} is established. 
\hfill $\Box$ \par\medskip
Lemma \ref{lem:p-p} leads to an algebraic equation for $x$, which is 
a preliminary to the one \eqref{eqn:alg-eq} in Theorem \ref{thm:a-x}.   
\begin{corollary} \label{cor:alg-eq}
If $\lambda = (p, 0, r; a, 1/2; x) \in \cI$ is a non-elementary solution,  
then $x$ is the unique root in the interval $(0, \, 1)$ of the algebraic equation 
$Y(z; \bp) = 0$, where $Y(z; \bp)$ is defined in formula \eqref{eqn:Y}.     
\end{corollary}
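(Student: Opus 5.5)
The corollary comes from comparing the $(1,2)$-entry of $A(w;\lambda)$ in two ways. By Lemma \ref{lem:R(w;l)} and Theorem \ref{thm:cfcr}, a non-elementary solution $\lambda$ makes $\phi_{12}^{(r-1)}(w;\lambda)$ vanish identically, so $A(w;\lambda)_{12} = Q(w;\lambda) \equiv 0$ as a rational function of $w$. Consequently every coefficient in the expansion of $A(w;\lambda)_{12}$ as $w \to \infty$ is zero. Lemma \ref{lem:p-p} says the constant term of this entry is $0$ automatically, while the coefficient of $w^{-1}$ is $C_2 \cdot Y(x;\bp)$. Hence $C_2 \cdot Y(x;\bp) = 0$.

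Next I check that $C_2 \neq 0$ for data in $\cI$. Since $0 < p < r$ and $0 < x < 1$, the factors $p^{1-p}$, $(r-p)^{p-r}$, $(2r)^{-1}$, $x^{1-r}$ and $1-x$ are all finite and nonzero. The factor $(r-px)^{-1}$ is also finite and nonzero, because $r - px > r - p > 0$. Therefore $Y(x;\bp) = 0$, so $x$ is a root of $Y(z;\bp)$ in $(0,1)$.

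For uniqueness I repeat the monotonicity argument the paper uses for $\varphi_s$. At the endpoints,
\[
Y(0;\bp) = -r^r < 0, \qquad Y(1;\bp) = p^p (r-p)^{r-p} > 0 .
\]
The derivative is
\[
Y'(z;\bp) = r\, p^p (r-p)^{r-p} z^{r-1} + (r-p)\, r^r (1-z)^{r-p-1},
\]
and both terms are positive on $(0,1)$ because $r > p \ge 1$. So $Y(\cdot\,;\bp)$ is strictly increasing on $[0,1]$ and has exactly one root in $(0,1)$, which must be $x$.

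There is no real obstacle here. The only point needing care is that $C_2$ does not vanish, in particular that $r - px \neq 0$, and this is immediate from $p < r$ and $x < 1$.
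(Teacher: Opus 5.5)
Your proof is correct and follows the paper's argument. The vanishing of the $(1,2)$-entry (the matrix $A(w;\lambda)$ is lower triangular by Lemma \ref{lem:R(w;l)}), combined with the $w^{-1}$ coefficient $C_2 \cdot Y(x;\bp)$ from Lemma \ref{lem:p-p}, forces $Y(x;\bp)=0$; uniqueness then follows from the sign change at the endpoints and $Y'>0$ on $(0,1)$, and your explicit check that $C_2 \neq 0$ (in particular $r - px > r - p > 0$) is a detail the paper leaves implicit.
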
 
{\it Proof}. 
By Lemma \ref{lem:R(w;l)} the matrix $A(w; \lambda)$ is lower triangular, 
so formula \eqref{eqn:p-p} in Lemma \ref{lem:p-p} forces $Y(x; \bp) = 0$, 
hence $x$ is a root of the equation $Y(z; \bp) = 0$. 
Since $Y(0; \bp) = -r^r < 0 < Y(1; \bp) = p^p (r-p)^{r-p}$ and 
$$
Y'(z; \bp) = r(r-p) \{ p^p (r - p)^{r-p-1} z^{r-1} + r^{r-1} (1-z)^{r-p-1} \} > 0 \quad 
\mbox{for} \quad  0 < z < 1, 
$$
the equation $Y(z; \bp) = 0$ has a unique root $x$ in the interval $(0, \, 1)$.  
\hfill $\Box$ \par\medskip
We are now able to determine the general form of GPF every 
non-elementary solution in $\cI$ must take.   
To state it we denote by $[k]$ the set $\{ 0, 1, \dots, k-1 \}$ 
for a given positive integer $k$.  
\begin{proposition} \label{prop:p-p} 
For any non-elementary solution $\lambda = (p, 0, r; a, 1/2; x) \in \cI$ 
there exist subsets $I_p \subset [p] \setminus \{ 0 \}$, 
$J_p \subset [r-p]$ and $J_q \subset [r]$ with total cardinality 
$|I_p| + |J_p| + |J_q| = r$ such that $\lambda$   
admits a GPF of the form
\begin{equation} \label{eqn:gpf1a}
f(w; \lambda) 
= C \cdot \dfrac{\prod_{i=0}^{r-1} \vG(w+ \frac{i}{r})}{\prod_{i=1}^r \vG(w + v_i)},   
\tag{$\ref{eqn:gpf1}$}
\end{equation} 
where $C$ is a nonzero constant and $v_1, \dots, v_r$ are determined by 
the equation 
\begin{equation} \label{eqn:division} 
\prod_{i=1}^r (w+v_i) =  
\prod_{i \in I_p} \Big(w+ \ts\frac{i+a}{p} \Big) 
\ds \prod_{j \in J_p} \Big( w + \ts\frac{j-a}{r-p} \Big) 
\ds \prod_{j \in J_q} \Big( w + \ts\frac{j-1/2}{r} \Big).    
\end{equation}
Moreover the numbers $v_1, \dots, v_r$ sum to $v_1 + \cdots + v_r = (r-1)/2$.   
\end{proposition}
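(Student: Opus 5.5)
From Lemma \ref{lem:R(w;l)}, for a non-elementary solution the ratio $f(w+1;\lambda)/f(w;\lambda)$ equals
\[
R(w;\lambda) = (r/x)^r (x-1)^{r-p}\,\frac{\prod_{i=0}^{r-1}(w+i/r)}{\phi_{22}^{(r)}(w;\lambda)},
\]
where $\phi_{22}^{(r)}$ has exact degree $r$. The plan is to read off the $v_i$ as the roots of $\phi_{22}^{(r)}$, to pin down the constant $d$, and then to apply Theorem \ref{thm:lift}. The argument has three steps.

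First, I locate the roots of $\phi_{22}^{(r)}$. By \eqref{eqn:phi}, $\phi_{11}^{(r-1)}\phi_{22}^{(r)}$ is a constant times the product of linear factors $w+\frac{i+a}{p}$ for $i\in[p]\setminus\{0\}$, $w+\frac{j-a}{r-p}$ for $j\in[r-p]$, and $w+\frac{j-1/2}{r}$ for $j\in[r]$. By unique factorization in $\bC[w]$, the polynomial $\phi_{22}^{(r)}$ divides this product. Hence, after normalization, $\phi_{22}^{(r)}$ equals its leading coefficient times a subproduct indexed by subsets $I_p\subset[p]\setminus\{0\}$, $J_p\subset[r-p]$, $J_q\subset[r]$. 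Since $\deg\phi_{22}^{(r)}=r$, the total cardinality is $r$. Repeated factors, such as coinciding roots from different families, are handled by choosing the indices consistently with multiplicity; the factors are listed with multiplicity anyway. This yields \eqref{eqn:division}, with $\prod_{i=1}^r(w+v_i)$ defined as the monic normalization of $\phi_{22}^{(r)}$.

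Second, I identify the leading constant. We have $R(w;\lambda)=d\cdot\prod(w+i/r)/\prod(w+v_i)$, where $d=(r/x)^r(x-1)^{r-p}/\mathrm{lc}(\phi_{22}^{(r)})$. Letting $w\to\infty$ and using Lemma \ref{lem:p-p}, the $(1,1)$-entry of $A(w;\lambda)$ tends to $1$, so $d=1$. This is why no factor $d^w$ appears in \eqref{eqn:gpf1}.

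Third, I apply Theorem \ref{thm:lift} with $m=n=r$, $u_i=i/r$ for $i=0,\dots,r-1$, and $d=1$. This gives the GPF \eqref{eqn:gpf1} with a nonzero constant $C$; $C$ is nonzero because $f\not\equiv0$, for instance since $f(w;\lambda)\to1$ along suitable directions. The same theorem gives \eqref{eqn:sum2}, namely $\sum v_i=\sum_{i=0}^{r-1} i/r=(r-1)/2$.

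The main subtlety is the bookkeeping in the first step. It requires that the total cardinality be exactly $r$ even when roots from different families coincide. It also requires that the normalization of the leading coefficient be consistent with $d=1$ as read off from the asymptotic expansion. Both points follow from the exact degree statement in Lemma \ref{lem:R(w;l)} and from the $(1,1)$-entry of \eqref{eqn:p-p}.
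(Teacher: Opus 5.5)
Your proposal is correct and follows the paper's proof essentially step for step: you write $\phi_{22}^{(r)}(w;\lambda)=c_1\prod_{i=1}^r(w+v_i)$ using the exact degree from Lemma~\ref{lem:R(w;l)}, read off the index sets from \eqref{eqn:phi}, obtain $d=1$ from $R(w;\lambda)=A(w;\lambda)_{11}\to 1$ in \eqref{eqn:p-p}, and conclude via Theorem~\ref{thm:lift} and \eqref{eqn:sum2}. One harmless slip in a side remark: as $w\to+\infty$ one actually has $f(w;\lambda)\to(1-px/r)^{-1/2}$ rather than $1$ (this limit is exactly the constant $C$, e.g.\ $2/\sqrt{3}$ in Example~\ref{ex:rat-x}), but $C\neq 0$ is in any case already part of what Theorem~\ref{thm:lift} delivers.
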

{\it Proof}.  
By Lemma \ref{lem:R(w;l)} we have $\deg \phi_{22}^{(r)}(w; \lambda) = r$,   
so we can write $\phi_{22}^{(r)}(w; \lambda) = c_1 (w+v_1) \cdots (w+v_r)$ with 
a nonzero constant $c_1$ and some numbers $v_1, \dots, v_r$.  
By equation \eqref{eqn:phi} there exist subsets $I_p \subset [p] \setminus \{ 0 \}$, 
$J_p \subset [r-p]$ and $J_q \subset [r]$ with $|I_p| + |J_p| + |J_q| = r$ such that 
equation \eqref{eqn:division} is satisfied.   
From formula \eqref{eqn:R(w;l)} we have   
$$
\dfrac{f(w+1; \lambda)}{f(w; \lambda)} = R(w; \lambda) = d \cdot 
\frac{\prod_{i=0}^{r-1} \left( w+ \frac{i}{r}\right)}{\prod_{i=1}^r (w+v_i)},  
$$
for some constant $d$. 
Since $R(w; \lambda) = A(w; \lambda)_{11} \to 1$ as $w \to \infty$ in 
formula \eqref{eqn:p-p}, we have $d = 1$. 
Thus GPF \eqref{eqn:gpf1} follows from Theorem \ref{thm:lift}, and  
the summation $v_1 + \cdots + v_r = (r-1)/2$ comes from formula \eqref{eqn:sum2}.   
\hfill $\Box$ \par\medskip
Proposition \ref{prop:p-p} establishes GPF \eqref{eqn:gpf1} and the first half 
of summation formula \eqref{eqn:sum} in Theorem \ref{thm:gpf}. 
\subsection{Truncated Hypergeometric Products} \label{ss:trunc}
In view of the significant roles played by the rational functions $Q(w; \lambda)$ and $R(w; \lambda)$, 
it is important to find effective formulas for them. 
This amounts to having such formulas for the polynomials $\phi_{12}^{(r-1)}(w; \lambda)$ 
and $\phi_{22}^{(r)}(w; \lambda)$, since the former are connected to the latter 
by the relations \eqref{eqn:RQA}, \eqref{eqn:A(w;l)} and \eqref{eqn:R(w;l)}.  
The aim of this subsection is to express those polynomials in terms of 
truncations of products of two hypergeometric series; see Lemma \ref{lem:Phi-P}. 
\par
Given a power series $\phi (z) = \sum_{j=0}^{\infty}c_{k} \, z^{j}$, its {\sl truncation} 
at degree $k$ is the polynomial $\left<\phi(z)\right>_{k} := \sum_{j=0}^{k} c_{k} \, z^{j}$. 
In what follows truncation is always taken with respect to the variable $z$. 
First we recall from \cite[Lemma 10.1]{Iwasaki1} the following lemma, which is 
originally due to Ebisu \cite[Proposition 3.4, Theorem 3.7, Remark 3.11]{Ebisu1}.     
\begin{lemma} \label{lem:ebi} 
For any integer vector $\bp = (p, q, r) \in \bZ^3$ with $0 \le q \le p \le r-1$ we have  
\begin{subequations} \label{eqn:q(a)}
\begin{align} 
q(\ba ;\bp; z) &= z^{1-r} (z - 1) \cdot C ( \ba; \bp) \cdot 
\langle \hgF(\ba^*; z) \cdot \hgF(\bv - \ba^* - \bp^*; z) \rangle_{r-q-1}, 
\label{eqn:q(a;p)}  \\
q (\ba ;\bp+ \1; z) &= z^{-r} (z - 1) \cdot C (\ba; \bp + \1) 
\cdot \langle \hgF(\ba^*; z)\cdot \hgF(\1 - \ba^* - \bp^*; z) \rangle_{r-q-1},  
\label{eqn:q(a;p+1)} 
\end{align}
\end{subequations}
where $\ba := (\alpha, \beta, \gamma)$, 
$\ba^* := (\gamma - \alpha, \gamma - \beta, \gamma)$, 
$\bp^* := (r-p, r-q, r)$, $\bv := (1, 1, 2)$, $\1 := (1, 1, 1)$ and 
$$
C(\ba ; \bp) := 
(-1)^{r-p-q} \cdot 
\frac{ (\gamma)_{r-1}(\gamma + 1)_{r-1}}{(\alpha + 1)_{p-1} (\beta+1)_{q-1} 
(\gamma - \alpha)_{r-p}(\gamma - \beta)_{r-q}}, \qquad 
(\alpha+1)_{-1} := \alpha^{-1}.  
$$
\end{lemma}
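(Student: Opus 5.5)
The plan is to obtain $q(\ba;\bp;z)$ by solving the three-term relation \eqref{eqn:ttr} with Cramer's rule over a basis of solutions of the contiguity system. After that, two facts pin down $q$: the product of two hypergeometric series agrees with a polynomial to low order, and $q$ has an a priori degree bound.

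First, \eqref{eqn:ttr} holds not only for $\hgF$ but for every function in the space spanned by the two local solutions at $z=0$. Those solutions are $\hgF(\ba;z)$ and $G(\ba;z):=\kappa(\ba)\, z^{1-\gamma}\hgF(\alpha-\gamma+1,\beta-\gamma+1;2-\gamma;z)$. The Gamma-factor normalization $\kappa(\ba)$ is chosen so that $G$ obeys the same fifteen Gauss contiguous relations as $\hgF$. Since $r(\ba;\bp;z)$ and $q(\ba;\bp;z)$ come from iterating these relations, Cramer's rule gives
\[
q(\ba;\bp;z)=\frac{\hgF(\ba;z)\,G(\ba+\bp;z)-G(\ba;z)\,\hgF(\ba+\bp;z)}{\hgF(\ba;z)\,G(\ba+\1;z)-G(\ba;z)\,\hgF(\ba+\1;z)}.
\]
The denominator is a Casoratian, i.e.\ essentially the Wronskian of the hypergeometric equation. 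It therefore equals an explicit constant times $z^{-\gamma}(1-z)^{\gamma-\alpha-\beta-1}$.

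In the numerator I apply Euler's transformation \eqref{eqn:euler} to every factor, which converts $\ba$ to $\ba^*$ and $\ba+\bp$ to $\ba^*+\bp^*$ up to powers of $(1-z)$. The $(1-z)$-powers then combine with those of the Casoratian to produce the single factor $(z-1)$. After the Euler transformation, only the term $\hgF(\ba^*;z)\cdot z^{-r}\hgF(\bv-\ba^*-\bp^*;z)$ can contribute integral powers of $z$. The cross term carries the factor $z^{1-\gamma}$ and so, as a formal series in $z$, consists only of powers of the form $z^{1-\gamma+m}$.

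Next I use the a priori shape of $q$. By the matrix-product representation \eqref{eqn:mp} and the explicit matrices \eqref{eqn:A1-A3}, the quantity $z^{r-1}q/(z-1)$ is a polynomial in $z$ of degree at most $r-q-1$, with coefficients rational in $\ba$; compare the shape of Lemma~\ref{lem:A(a;p)}. This is exactly where the hypothesis $0\le q\le p\le r-1$ enters. Comparing the two expressions for $q$ as formal objects in $z$ and $z^{1-\gamma}$ with generic $\gamma$, the $z^{1-\gamma}$-part must cancel identically. The integral-power part of the product $\hgF(\ba^*;z)\,\hgF(\bv-\ba^*-\bp^*;z)$ must then agree with that polynomial up to degree $r-q-1$. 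Hence the polynomial equals the truncation $\langle\cdot\rangle_{r-q-1}$, and this yields \eqref{eqn:q(a;p)}. Formula \eqref{eqn:q(a;p+1)} follows by the same argument with $\bp$ replaced by $\bp+\1$: the second series changes to $\hgF(\1-\ba^*-\bp^*;z)$, the prefactor shifts to $z^{-r}$, and the degree bound is unchanged.

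The constant $C(\ba;\bp)$ is fixed by comparing one coefficient, e.g.\ the constant term of the polynomial. Equivalently, it is the ratio of the Casoratian normalizations together with $\kappa(\ba+\bp)/\kappa(\ba)$, which produces the Pochhammer quotient and the sign $(-1)^{r-p-q}$. The main obstacle is the bookkeeping in the numerator: showing that after the Euler transformations the relevant series is exactly $\hgF(\bv-\ba^*-\bp^*;z)$ with the power $z^{1-r}$, and that the nontruncated tail is absorbed consistently with the degree bound. If this direct comparison gets messy, the fallback is induction on $|\bp|$ via $A(\ba;\bp+\be_i)=A_i(\ba+\bp)A(\ba;\bp)$. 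That route checks that the truncated products satisfy the same recursions, using contiguity of $\hgF(\ba^*;z)$ and the shifted second factor.
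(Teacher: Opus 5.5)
The paper does not prove this lemma; it quotes it from Ebisu's work. Your argument therefore has to stand on its own, and two of its steps fail.

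\textbf{The separation step is wrong.} $G(\ba+\bp;z)$ is a multiple of $z^{1-\gamma-r}$, so both products $\hgF(\ba;z)\,G(\ba+\bp;z)$ and $G(\ba;z)\,\hgF(\ba+\bp;z)$ carry the same factor $z^{1-\gamma}$. Divide by the Casoratian, which is a constant times $z^{-\gamma}(1-z)^{\gamma-\alpha-\beta-1}$. Apply Euler's transformation to $\hgF(\ba;z)$ only; the series in $G(\ba+\bp;z)$ already is $\hgF(\bv-\ba^*-\bp^*;z)$. With $\kappa(\ba)=\vG(\alpha-\gamma+1)\vG(\beta-\gamma+1)\vG(\gamma)/\{\vG(\alpha)\vG(\beta)\vG(2-\gamma)\}$ you then get
\[
q(\ba;\bp;z)=z^{1-r}(z-1)\,C(\ba;\bp)\,\hgF(\ba^*;z)\,\hgF(\bv-\ba^*-\bp^*;z)+z\,\phi(z),
\]
where $\phi(z)$ is a power series. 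Both terms are in integral powers of $z$, so there is no $z^{1-\gamma}$-part to discard. If your separation were valid, it would force $z^{r-1}q/(z-1)$ to equal the whole infinite product, and the truncation would be unexplained. The actual mechanism is order of vanishing: after multiplying by $z^{r-1}$ the second term is $O(z^{r})$. Hence a polynomial of degree at most $r-1$ must equal the truncation at degree $r-1$. The constant does come out as exactly $C(\ba;\bp)$, so that part of your plan is fine.

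\textbf{The a priori shape of $q$ is only asserted, and it is false in part of the stated range.} This shape carries the real content. Lemma~\ref{lem:A(a;p)} concerns $q=0$ and degrees in $(\alpha,\gamma)$, not in $z$. Take $\bp=(2,2,3)$, which satisfies $0\le q\le p\le r-1$. Combine the second row of $A_3(\ba+\1)$ with the relation for $\hgF(\ba+2\cdot\1;z)$ coming from the hypergeometric equation. This gives
\[
q(\ba;\bp;z)=\frac{(\gamma+1)(\gamma+2)\{\gamma(\gamma+1)+(\alpha\beta-\gamma\alpha-\gamma\beta-\gamma)z\}}{(\alpha+1)(\beta+1)(\gamma-\alpha)(\gamma-\beta)\,z^{2}},
\]
whose value at $z=1$ is $(\gamma+1)(\gamma+2)/\{(\alpha+1)(\beta+1)\}\neq0$. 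So \eqref{eqn:q(a;p)} fails at $\bp=(2,2,3)$, and \eqref{eqn:q(a;p+1)} fails at $\bp=(1,1,2)$.

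A correct route to the shape is to run your Cramer argument also with renormalized local bases at $z=1$ and $z=\infty$. Near $z=1$ this gives $q=(1-z)^{1+r-p-q}h_1+(1-z)h_2$ with $h_1,h_2$ holomorphic. At $\infty$ it gives $q(\ba;\bp;z)=O(z^{1-q})$ when $q\le p$. Since $q$ has poles only at $0,1,\infty$, $z^{r-1}q/(z-1)$ is then a polynomial of degree at most $r-q-1$ once $p+q\le r$. Formula \eqref{eqn:q(a;p+1)} is formula \eqref{eqn:q(a;p)} applied to $\bp+\1$, so both formulas need $p+q\le r-1$. That condition covers the case $q=0$, $p\le r-1$, which is the only case the paper uses.
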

\par
Given a piece of integral data $\lambda = (p, 0, r; a, 1/2; x) \in \cI$, 
we consider truncated hypergeometric products:  
\begin{subequations} \label{eqn:PhiP}
\begin{align} 
\Phi (w ; \lambda) &: = (r w)_{r-1} \cdot 
\langle \hgF(\bal^*(w); z) \cdot \hgF(\bv - \bal^*(w+1) ; z) \rangle_{r - 1} \big|_{z = x},  
\label{eqn:Phi} \\  
P(w; \lambda) &:= (r w)_{r} \cdot 
\langle \hgF(\bal^*(w) ; z) \cdot \hgF(\1 - \bal^*(w + 1); z) \rangle_{r - 1} \big|_{z=x},  
\label{eqn:P}
\end{align}
\end{subequations} 
where $\bal^*(w) := ( (r - p)w-a, \, r w-1/2; \, r w)$. 
Then we have the following lemma.      
\begin{lemma} \label{lem:Phi-P}
For any piece of integral data $\lambda = (p, 0, r; a, 1/2; x) \in \cI$ we have  
\begin{subequations} \label{eqn:phi122}
\begin{align}
\phi_{12}^{(r - 1)}(w; \lambda) &= (-1)^{r-p} \cdot 2^{-1} \cdot 
x^{1-r} \cdot (x - 1) \cdot \Phi (w; \lambda),  
\label{eqn:phi12} \\
\phi_{22}^{(r)}(w; \lambda) &= (-1)^{r-p-1} x^{-r} (x-1) \cdot P(w;\lambda).  
\label{eqn:phi22}
\end{align}
\end{subequations}
\end{lemma}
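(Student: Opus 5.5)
The plan is to specialize Ebisu's formula in Lemma \ref{lem:ebi} to the principal part $\bp = (p, 0, r)$ at the point $\ba = (p w + a, \, 1/2, \, r w)$, $z = x$. Then I compare the result entry by entry with the representation of $A(w;\lambda)$ in Corollary \ref{cor:A(w;l)}. By \eqref{eqn:m-3tr} the $(1,2)$-entry of $A(\ba;\bp)$ is $q(\ba;\bp;z)$ and the $(2,2)$-entry is $q(\ba;\bp+\1;z)$. The hypothesis $0 \le q \le p \le r-1$ of Lemma \ref{lem:ebi} holds because $q = 0$ and $0 < p < r$ for $\lambda \in \cI$.

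\textbf{Parameters.} With $\ba = (p w + a, 1/2, r w)$ we get $\ba^* = ((r-p)w - a, \, r w - 1/2, \, r w) = \bal^*(w)$. We also have $\bp^* = (r-p, r, r)$, hence
$$\ba^* + \bp^* = ((r-p)(w+1) - a, \, r(w+1) - 1/2, \, r(w+1)) = \bal^*(w+1).$$
Consequently $\bv - \ba^* - \bp^* = \bv - \bal^*(w+1)$ and $\1 - \ba^* - \bp^* = \1 - \bal^*(w+1)$. The truncation degree is $r - q - 1 = r-1$. So the truncated products in \eqref{eqn:q(a)} are exactly those in \eqref{eqn:Phi} and \eqref{eqn:P}, apart from the Pochhammer prefactors $(rw)_{r-1}$ and $(rw)_r$.

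\textbf{Constants.} With $q = 0$ the convention $(\beta+1)_{-1} = \beta^{-1}$ puts $\beta^{-1} = 2$ in the denominator of $C(\ba;\bp)$, so
$$C(\ba;\bp) = (-1)^{r-p} \cdot 2^{-1} \cdot \frac{(r w)_{r-1}(r w+1)_{r-1}}{(p w + a + 1)_{p-1}((r-p)w - a)_{r-p}(r w - 1/2)_r}.$$
For $\bp + \1 = (p+1, 1, r+1)$ we have $(\beta+1)_0 = 1$, so
$$C(\ba;\bp+\1) = (-1)^{r-p-1}\frac{(r w)_r(r w+1)_r}{(p w + a + 1)_p((r-p)w - a)_{r-p}(r w - 1/2)_r}.$$

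\textbf{Comparison.} I equate $q(\ba;\bp;x)$ with the $(1,2)$-entry of \eqref{eqn:A(w;l)}, namely $(r w + 1)_{r-1}\phi_{12}^{(r-1)}/\{((r-p)w-a)_{r-p}(r w - 1/2)_r (p w + a + 1)_{p-1}\}$. The common factors $(r w + 1)_{r-1}$, $((r-p)w-a)_{r-p}$, $(r w - 1/2)_r$ and $(p w + a + 1)_{p-1}$ cancel. What remains is $\phi_{12}^{(r-1)} = (-1)^{r-p} 2^{-1} x^{1-r}(x-1)(r w)_{r-1}\langle\cdots\rangle_{r-1}$, which is \eqref{eqn:phi12}. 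In the same way, equating $q(\ba;\bp+\1;x)$ with the $(2,2)$-entry and cancelling $(r w + 1)_r$ and the three denominator Pochhammers gives \eqref{eqn:phi22}.

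These are identities of rational functions in $w$, valid for generic $w$ where no denominator vanishes. Since both sides of each identity are polynomials, they hold identically. The main point requiring care is the bookkeeping: one must check that the representation \eqref{eqn:A(a;p)} and Ebisu's formula use the same normalization of $A(\ba;\bp)$. Both are defined through \eqref{eqn:ttr} and \eqref{eqn:m-3tr}, so they do. One must also handle correctly the convention $(\beta+1)_{-1} = \beta^{-1}$, which produces the factor $2^{-1}$ rather than $2$.
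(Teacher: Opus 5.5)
Your proposal is correct and follows the paper's proof. Like the paper, you specialize Lemma~\ref{lem:ebi} at $\bp=(p,0,r)$, $\ba=(pw+a,\,1/2,\,rw)$, $z=x$, use $\bal^*(w)+\bp^*=\bal^*(w+1)$, and compare with the $(1,2)$- and $(2,2)$-entries in Corollary~\ref{cor:A(w;l)}. Your constant bookkeeping is also right, including the factor $2^{-1}$ from $(\beta+1)_{-1}=\beta^{-1}=2$ and the sign $(-1)^{r-p-1}$ for $\bp+\1$.
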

{\it Proof}. 
Substituting $\bp = (p, 0, r)$, $\ba = \bal(w)$, $\ba^* = \bal^*(w)$ and $z = x$ into 
formula \eqref{eqn:q(a;p)}, we have    
\begin{align*}
A(w; \lambda)_{12} &= q(\bal(w); \bp; x) \\  
&= x^{1-r} \cdot (x-1) \cdot C(\bal(w);\bp) 
\langle \hgF(\bal^*(w); z) \cdot 
\hgF(\bv-\bal^*(w) - \bp^*; z) \rangle_{r-1} \big|_{z = x} 
\\[1mm]
&= x^{1-r} \cdot (x-1) \cdot 
C(\bal(w);\bp) 
\langle \hgF(\bal^*(w); z) \cdot 
\hgF(\bv - \bal^*(w+1); z) \rangle_{r-1} \big|_{z=x} 
\\[1mm]
&= x^{1-r} (x-1) \cdot 
C(\bal(w);\bp) \cdot 
\Phi (w ; \lambda)/(r w)_{r-1},  
\end{align*} 
where $\bal^*(w) + \bp^* = \bal^*(w+1)$ is used in the third equality.   
On the other hand, formula \eqref{eqn:A(w;l)} implies  
$$
A(w; \lambda)_{12} = \frac{(r w+1)_{r-1} \cdot 
\phi_{12}^{(r-1)}(w;\lambda)}{(p w+a+1)_{p-1}((r-p)w-a)_{r-p}(r w-b)_{r}}. 
$$
Comparing these two expressions for $A(w; \lambda)_{12}$, we obtain formula \eqref{eqn:phi12}. 
\par
Similarly, substituting $\bp = (p, 0, r)$, $\ba = \bal(w)$, $\ba^* = \bal^*(w)$ 
and $z = x$ into formula \eqref{eqn:q(a;p+1)}, we have  
\begin{align*}
A(w; \lambda)_{22} &= q(\bal(w); \bp + \1; x) \\ 
&= x^{-r} \cdot (x-1) \cdot 
C(\bal(w); \bp + \1) \cdot \langle \hgF(\bal^*(w); z) 
\cdot \hgF(\1-\bal^*(w)-\bp^*; z) \rangle_{r-1} \big|_{z=x} 
\\[1mm] 
&= x^{-r} \cdot (x-1) \cdot 
C(\bal(w);\bp + \1) \cdot \langle \hgF(\bal^*(w); z) \cdot 
\hgF(\1 - \bal^*(w + 1); 
z \rangle_{r-1} \big|_{z=x} 
\\[1mm]  
&= x^{-r} (x-1) \cdot C(\bal(w);\bp + \1) \cdot P(w;\lambda)/(r w)_{r}.    
\end{align*}
On the other hand, formula \eqref{eqn:A(w;l)} implies  
$$
A(w;\lambda)_{22} = 
\frac{(r w + 1)_{r} \cdot \phi_{22}^{(r)}(w; \lambda)}{((r-p)w-a)_{r-p} 
(r w-1/2)_{r} (p w +a+1)_{p}}.  
$$
Comparing these two expressions for $A(w; \lambda)_{22}$, 
we obtain formula \eqref{eqn:phi22}. 
\hfill $\Box$ \par\medskip
In terms of truncated hypergeometric products, Theorem \ref{thm:cfcr} 
and Proposition \ref{prop:p-p} are summed up as follows. 
\begin{proposition} \label{prop:Phi-P} 
A piece of integral data $\lambda = (p, 0, r; a, 1/2; x) \in \cI$ is a solution 
if and only if the polynomial $\Phi(w; \lambda)$ vanishes identically, 
in which case we have $P(w; \lambda) =  c (w + v_1) \cdots (w + v_r)$, 
where $c$ is a nonzero constant and $v_1, \dots, v_r$ are the 
numbers appearing in GPF \eqref{eqn:gpf1a} in Propisition $\ref{prop:p-p}$.    
\end{proposition}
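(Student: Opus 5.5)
This is a direct translation of Theorem \ref{thm:cfcr}, Lemma \ref{lem:R(w;l)} and Proposition \ref{prop:p-p} into the language of truncated hypergeometric products. The plan is to use Lemma \ref{lem:Phi-P} as a dictionary between $\phi_{12}^{(r-1)}, \phi_{22}^{(r)}$ and $\Phi, P$.

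\emph{First assertion.} By Theorem \ref{thm:cfcr} and Lemma \ref{lem:R(w;l)}, an integral datum $\lambda = (p,0,r;a,1/2;x) \in \cI$ is a (non-elementary) solution if and only if $\phi_{12}^{(r-1)}(w;\lambda) \equiv 0$. Formula \eqref{eqn:phi12} gives
\[
\phi_{12}^{(r-1)}(w;\lambda) = (-1)^{r-p}\, 2^{-1} x^{1-r}(x-1)\,\Phi(w;\lambda).
\]
Since $0<x<1$, the prefactor $(-1)^{r-p}2^{-1}x^{1-r}(x-1)$ is a nonzero constant independent of $w$. Hence $\phi_{12}^{(r-1)} \equiv 0$ if and only if $\Phi \equiv 0$.

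There is one point to check here: elementary solutions. By Theorem \ref{thm:recall}(1), a datum with $b = 1/2$ is never elementary, because $1/2$ is not zero or a negative integer. So for the data under consideration, ``solution'' and ``non-elementary solution'' coincide, and the equivalence holds as stated.

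\emph{Second assertion.} Assume now that $\lambda$ is a solution. In the proof of Proposition \ref{prop:p-p} we wrote $\phi_{22}^{(r)}(w;\lambda) = c_1 (w+v_1)\cdots(w+v_r)$ with $c_1 \ne 0$. This used $\deg \phi_{22}^{(r)} = r$ from Lemma \ref{lem:R(w;l)}, and the $v_i$ are exactly the numbers appearing in GPF \eqref{eqn:gpf1a}. Inverting \eqref{eqn:phi22} gives
\[
P(w;\lambda) = (-1)^{r-p-1} x^{r}(x-1)^{-1}\,\phi_{22}^{(r)}(w;\lambda) = c\,(w+v_1)\cdots(w+v_r),
\]
where $c := (-1)^{r-p-1}x^r(x-1)^{-1}c_1 \neq 0$, again because $0<x<1$.

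There is no real obstacle in this argument. The only care needed is to confirm that the scalar prefactors in Lemma \ref{lem:Phi-P} do not depend on $w$ and do not vanish, and to note that the $v_i$ in Proposition \ref{prop:p-p} are defined as the roots of $\phi_{22}^{(r)}$. Both facts are immediate from the statements already established.
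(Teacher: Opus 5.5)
Your proposal is correct and follows essentially the same route as the paper. Lemma \ref{lem:R(w;l)}, together with the nonvanishing, $w$-independent prefactors in Lemma \ref{lem:Phi-P}, turns $\phi_{12}^{(r-1)}\equiv 0$ into $\Phi\equiv 0$ and carries the factorization of $\phi_{22}^{(r)}$ from the proof of Proposition \ref{prop:p-p} over to $P$. Your extra remark that $b=1/2$ excludes elementary solutions, by Theorem \ref{thm:recall}(1), justifies the step from ``non-elementary solution'' in Lemma \ref{lem:R(w;l)} to ``solution'' in the statement, which the paper leaves implicit.
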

{\it Proof}. 
Note that $\lambda$ is a solution if and only if $\phi_{12}^{(r-1)}(w; \lambda) \equiv 0$ 
by Lemma \ref{lem:R(w;l)},   
if and only if $\Phi(w; \lambda) \equiv 0$ by formula \eqref{eqn:phi12}. 
In this case, formula \eqref{eqn:phi22} implies that $P(w; \lambda)$ 
is a nonzero constant multiple of $(w + v_1) \cdots (w + v_r)$, since so is 
$\phi_{22}^{(r)}(w; \lambda)$ as stated in the proof of Proposition \ref{prop:p-p}.   
\hfill $\Box$ 
\subsection{Terminating Hypergeometric Sums} \label{ss:thgs}
If the truncated hypergeometric products $\Phi(w; \lambda)$ and 
$P(w; \lambda)$ in \eqref{eqn:PhiP} are evaluated at the points 
\begin{equation} \label{eqn:xi-eta}
\xi_j := - \frac{j-a}{r-p}, \quad j \in [r-p]; \qquad 
\eta_j := - \frac{j-1/2}{r}, \quad j \in [r],  
\end{equation}
then the two hypergeometric series therein reduce to finite sums,  
rendering it unnecessary to take truncations. 
As is already observed in \cite[\S 11]{Iwasaki1} for a different region 
of data $\lambda$, the (renormalized) hypergeometric polynomials  
\begin{equation} \label{eqn:cF}
\cF_{k}(\beta; \gamma; z) :=
\sum_{j=0}^{k}(-1)^{j}\binom{k}{j}(\beta)_{j}(\gamma + j)_{k-j} \, z^{j}, \qquad 
k \in \bZ_{\ge0},  
\end{equation} 
comes up in this context. 
Note that $\cF_k(\beta; \gamma; z) = (\gamma)_{k} \cdot 
\hgF(-k,\beta; \gamma; z)$, unless $\gamma$ is zero or a negative integer.  
\begin{lemma} \label{lem:sol-p}
Let $\lambda = (p, 0, r; a, 1/2; x) \in \cI$ be a piece of integral data 
and $j \in [r-p]$. 
If $r \xi_j \not \in \bZ$, then   
\begin{subequations} \label{eqn:sol-p}
\begin{align} 
\Phi (\xi_j; \lambda) 
&= (-1)^{r-p-1-j} \cdot (r \xi_j + j)_{p} \cdot \cF_j(r \xi_j -1/2; r \xi_j; x) 
\cdot \cF_{r-p-1-j} ( b_j -1/2; b_j; x ),  \label{eqn:sol-p1} \\ 
P ( \xi_j; \lambda) &= (-1)^{r-p-1-j} \cdot ( r \xi_j + j)_{p+1} \cdot 
\cF(r \xi_j -1/2; r \xi_j; x) \cdot \cF_{r-p-1-j} ( b_j-1/2; b_j-1; x ),    
 \label{eqn:sol-p2}
\end{align}
\end{subequations}
where $\xi_j$ is defined in formula \eqref{eqn:xi-eta} and $b_j := 2 - r(\xi_j + 1)$. 
\end{lemma}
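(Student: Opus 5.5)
The plan is to evaluate the two truncated products in \eqref{eqn:PhiP} directly at $w=\xi_j$. At this point both hypergeometric series terminate, the truncation becomes irrelevant, and all that remains is to bookkeep the Pochhammer prefactors.

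First, I compute the parameters at $w=\xi_j$. Since $(r-p)\xi_j-a=-j$, we get $\bal^*(\xi_j)=(-j,\,r\xi_j-1/2;\,r\xi_j)$. Hence $\hgF(\bal^*(\xi_j);z)=\hgF(-j,r\xi_j-1/2;r\xi_j;z)$ is a polynomial of degree $j$ in $z$. Because $r\xi_j\notin\bZ$, its lower parameter is not a nonpositive integer, so the series is well defined. For the second factor of $\Phi$, I use $\bv-\bal^*(w+1)=(1-(r-p)(w+1)+a,\ 3/2-r(w+1),\ 2-r(w+1))$. At $w=\xi_j$ this equals $(-(r-p-1-j),\ b_j-1/2,\ b_j)$ with $b_j=2-r(\xi_j+1)$. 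For the second factor of $P$, $\1-\bal^*(\xi_j+1)=(-(r-p-1-j),\ b_j-1/2,\ b_j-1)$. In both cases the series terminates at degree $k:=r-p-1-j$, and again $b_j,\,b_j-1\notin\bZ$ because $r\xi_j\notin\bZ$.

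The product of the two polynomials therefore has degree $j+k=r-p-1\le r-1$, so $\langle\cdot\rangle_{r-1}$ does nothing. Using $\cF_k(\beta;\gamma;z)=(\gamma)_k\,\hgF(-k,\beta;\gamma;z)$, I rewrite
\[
\Phi(\xi_j;\lambda)=\frac{(r\xi_j)_{r-1}}{(r\xi_j)_j\,(b_j)_k}\,\cF_j(r\xi_j-1/2;r\xi_j;x)\,\cF_k(b_j-1/2;b_j;x),
\]
and obtain the analogous expression for $P$ with $(r\xi_j)_r$ in the numerator and $(b_j-1)_k$ in the denominator. The entry $\cF(\cdots)$ in \eqref{eqn:sol-p2} should read $\cF_j$.

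The only real step is simplifying the prefactors. Reversing the order of the factors gives
\[
(b_j)_k=\prod_{i=0}^{k-1}(2-r-r\xi_j+i)=(-1)^k\,(r\xi_j+p+j)_k,
\qquad
(b_j-1)_k=(-1)^k\,(r\xi_j+p+j+1)_k .
\]
On the other side, $(r\xi_j)_{r-1}=(r\xi_j)_j\,(r\xi_j+j)_p\,(r\xi_j+j+p)_k$ and $(r\xi_j)_r=(r\xi_j)_j\,(r\xi_j+j)_{p+1}\,(r\xi_j+j+p+1)_k$. Cancelling leaves exactly $(-1)^{r-p-1-j}(r\xi_j+j)_p$ and $(-1)^{r-p-1-j}(r\xi_j+j)_{p+1}$, which gives \eqref{eqn:sol-p1} and \eqref{eqn:sol-p2}.

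I expect no genuine obstacle. The main point of care is checking these index shifts and noting that the hypothesis $r\xi_j\notin\bZ$ is exactly what makes every Pochhammer factor that is divided by nonzero.
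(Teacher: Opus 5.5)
Your proposal is correct and follows essentially the same route as the paper. You substitute $w=\xi_j$, note that both series terminate (so the truncation $\langle\cdot\rangle_{r-1}$ is vacuous, since the product has degree $r-p-1\le r-1$), and split $(r\xi_j)_{r-1}$ and $(r\xi_j)_r$ into the Pochhammer blocks that absorb the normalizations of $\cF_j$ and $\cF_{r-p-1-j}$, with $r\xi_j\notin\bZ$ making every division legitimate. You are also right that the unsubscripted $\cF$ in \eqref{eqn:sol-p2} is a typo for $\cF_j$.
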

{\it Proof}.   
Substituting $w = \xi_j$ into definitions \eqref{eqn:PhiP} and observing that 
\begin{align*}
\bal^*(\xi_j) &= ( -j, \, r \xi_j -1/2, \, r \xi_j),  \\ 
\bv-\bal^*(\xi_j +1) &= (-(r-p-1-j), \, 3/2-r( \xi_j + 1), \, 2- r( \xi_j +1) ), \\
\1-\bal(\xi_j +1) &= ( -(r-p-1-j), \, 3/2 -r( \xi_j +1), \, 1 -r( \xi_j + 1)), \\
(r \xi_j)_{r-1} &= (-1)^{r-p-1-j} \cdot (r \xi_j + j)_p \cdot (r \xi_j)_j \cdot (2-r( \xi_j+1))_{r-p-1-j}, \\
(r \xi_j)_r &= (-1)^{r-p-1-j} \cdot (r \xi_j + j)_{p+1} \cdot (r \xi_j)_j \cdot (1-r(\xi_j +1))_{r-p-1-j},  
\end{align*}
with $r \xi_j \not \in \bZ$ and $b_j = 2- r( \xi_j + 1) \not \in \bZ$, 
we obtain formulas \eqref{eqn:sol-p} from definition \eqref{eqn:cF}.  
\hfill $\Box$ 
\begin{lemma} \label{lem:sol-q}
For any piece of integral data $\lambda = (p, 0, r; a, 1/2; x) \in \cI$ and any $j \in [r]$ 
we have 
\begin{subequations} \label{eqn:sol-q} 
\begin{align}
\Phi(\eta_j; \lambda) &= (-1)^{r-1-j} \cdot 
\cF_j( c_j; \, 1/2 - j; \, x) \cdot \cF_{r-1-j}(d_j; \, 3/2 + j-r; \, x), 
\label{eqn:sol-q1} \\ 
2 \, P(\eta_j; \lambda) &= (-1)^{r-1-j} \cdot 
\cF_j( c_j; \, 1/2 - j; \, x) \cdot \cF_{r-1-j}(d_j; \, 1/2 + j-r; \, x), 
\label{eqn:sol-q2}
\end{align}
\end{subequations}
where $\eta_j$ is defined in formula \eqref{eqn:xi-eta}, 
$c_j := (r-p) \eta_j - a$ and $d_j := 1 - (r-p)  (\eta_j + 1) + a$.    
\end{lemma}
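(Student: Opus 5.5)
The plan is to substitute $w=\eta_j$ directly into the definitions \eqref{eqn:PhiP} of $\Phi(w;\lambda)$ and $P(w;\lambda)$, in the same way as in the proof of Lemma \ref{lem:sol-p}. The key observation is that at $w=\eta_j$ both hypergeometric factors terminate, and their degrees add up to exactly $r-1$, so the truncation $\langle\cdot\rangle_{r-1}$ has no effect.

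First I compute the parameter triples, using $r\eta_j = 1/2-j$ and $r(\eta_j+1) = r-j+1/2$:
\begin{align*}
\bal^*(\eta_j) &= (c_j, \, -j, \, 1/2-j), \\
\bv-\bal^*(\eta_j+1) &= (d_j, \, -(r-1-j), \, 3/2+j-r), \\
\1-\bal^*(\eta_j+1) &= (d_j, \, -(r-1-j), \, 1/2+j-r).
\end{align*}
By the symmetry \eqref{eqn:euler} in the first two parameters, $\hgF(\bal^*(\eta_j);z)$ is a polynomial in $z$ of degree $j$. The second factors are polynomials of degree $r-1-j$. None of the lower parameters $1/2-j$, $3/2+j-r$, $1/2+j-r$ is a nonpositive integer, since they are half-integers. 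Hence the identity $\cF_k(\beta;\gamma;z)=(\gamma)_k\,\hgF(-k,\beta;\gamma;z)$ applies. It gives $\hgF(\bal^*(\eta_j);z)=\cF_j(c_j;1/2-j;z)/(1/2-j)_j$, and similarly for the other two factors, with normalizers $(3/2+j-r)_{r-1-j}$ and $(1/2+j-r)_{r-1-j}$. The product has degree exactly $r-1$, so the truncation is the full product, and we may set $z=x$.

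It remains to match the Pochhammer prefactors. For $\Phi$ I split
\[
(r\eta_j)_{r-1} = (1/2-j)_{r-1} = (1/2-j)_j\,(1/2)_{r-1-j}.
\]
The first piece cancels the normalizer of $\cF_j$. For the second piece, I reverse the order of the factors of $(3/2+j-r)_{r-1-j}$; these are the numbers $3/2+j-r,\dots,1/2$. This should give $(1/2)_{r-1-j}=(-1)^{r-1-j}(3/2+j-r)_{r-1-j}$ up to a shift which I must check carefully. Concretely, the factors of $(3/2+j-r)_{r-1-j}$ are $-(r-j-3/2),\dots,-1/2,\,1/2$, while those of $(1/2)_{r-1-j}$ are $1/2,3/2,\dots,r-j-3/2$. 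These multisets agree up to signs, with exactly $r-2-j$ negative entries. So the sign and the precise indexing need care, and this is the step where errors are most likely.

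For $P$ I split $(r\eta_j)_r=(1/2-j)_j\,(1/2)_{r-j}$. I compare $(1/2)_{r-j}$ with $(1/2+j-r)_{r-1-j}$, whose factors are $-(r-j-1/2),\dots,-3/2$. These give $(-1)^{r-1-j}(3/2)_{r-1-j}$, and $(1/2)_{r-j}=\tfrac12(3/2)_{r-1-j}$. The leftover factor $\tfrac12$ accounts for the $2$ on the left side of \eqref{eqn:sol-q2}. The main obstacle is this bookkeeping of signs and indices in the Pochhammer identities, especially the one for $\Phi$. I would verify the small cases $j=0$ and $j=r-1$ explicitly to pin down the sign $(-1)^{r-1-j}$. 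The hypergeometric part of the argument is immediate once the terminating structure is recognized.
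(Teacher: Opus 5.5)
Your plan is the paper's own proof: substitute $w=\eta_j$, read off the terminating triples $\bal^*(\eta_j)$, $\bv-\bal^*(\eta_j+1)$, $\1-\bal^*(\eta_j+1)$ (all three of yours are correct), and split the Pochhammer prefactors as $(r\eta_j)_{r-1}=(-1)^{r-1-j}(r\eta_j)_j\,(3/2+j-r)_{r-1-j}$ and $(r\eta_j)_r=(-1)^{r-1-j}\cdot\tfrac12\cdot(r\eta_j)_j\,(1/2+j-r)_{r-1-j}$. The step you flagged works out, but your explicit listing miscounts it. $(3/2+j-r)_{r-1-j}$ has exactly $r-1-j$ factors, namely $3/2+j-r,\dots,-1/2$, all negative, with no factor $1/2$. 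Hence $(3/2+j-r)_{r-1-j}=(-1)^{r-1-j}(1/2)_{r-1-j}$, exactly the identity you first guessed, and this gives the sign $(-1)^{r-1-j}$ in \eqref{eqn:sol-q1}. For the truncation step you only need the product to have degree at most $r-1$, not exactly $r-1$.
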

{\it Proof}. 
Substituting $w = \eta_j$ in definitions \eqref{eqn:PhiP} and observing that 
\begin{align*}
\bal^*(\eta_j) &= ( c_j, \, - j, \, r \eta_j),  \\ 
\bv-\bal^*(\eta_j +1) &= (d_j, \, -(r-1-j), \, 2-r(\eta_j +1) ), \\
\1-\bal^*(\eta_j +1) &= ( d_j, \, -(r-1-j), \, 1 -r(\eta_i + 1)), \\
(r \eta_j)_{r-1} &= (-1)^{r-1-j} \cdot  (r \eta_j)_j \cdot (2-r( \eta_j +1))_{r-1-j}, \\
(r \eta_j)_r &= (-1)^{r-1-j} \cdot (r \eta_j + j) \cdot (r \eta_j)_j \cdot (1-r( \eta_j +1))_{r-1-j},  
\end{align*}
with $r \eta_j + j = 1/2$, $r \eta_j = 1/2 - j \not \in \bZ$, 
$2 - r(\eta_j + 1) = 3/2 + j - r \not \in \bZ$ and $1 - r(\eta_j + 1) = 1/2 + j - r \not \in \bZ$,    
we obtain formulas \eqref{eqn:sol-q} from definition \eqref{eqn:cF}. 
\hfill $\Box$ \par\medskip 
In connection with the evaluations mentioned above, we recall from \cite[Lemma 11.2]{Iwasaki1} 
the following lemma.   
\begin{lemma} \label{lem:thgs}
Fix $k \in \bN$, $\beta, \gamma \in \bC$, 
$x \in \bC \setminus \{0,1\}$ and let $z$ be an indeterminate variable.  
\begin{enumerate} 
\setlength{\itemsep}{-1pt}
\item $\cF_{k}(\beta; \gamma; z)$ vanishes identically in $\bC[z]$ 
if and only if $\beta$, $\gamma \in \bZ$ and $0 \le -\beta \le -\gamma \le k -1$. 
\item If $\mathcal{F}_{k}(\beta; \gamma; x) = \cF_{k}(\beta; \gamma-1; x) = 0$ then  
$\cF_{k}(\beta; \gamma; z)$ vanishes identically in $\bC [z]$. 
\item If $\cF_k(\beta; \gamma; x) = \cF_{k-1}(\beta + 1; \gamma + 1; x) = 0$ 
then $\cF_k(\beta; \gamma; z)$ vanishes identically in $\bC[z]$.  
\end{enumerate} 
\end{lemma}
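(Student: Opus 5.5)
The plan is to argue directly from the explicit coefficients in definition \eqref{eqn:cF}, using two elementary differential identities together with the hypergeometric differential equation.

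For part 1, $\cF_k(\beta;\gamma;z)\equiv 0$ means that $(\beta)_j(\gamma+j)_{k-j}=0$ for every $j=0,\dots,k$.
\begin{itemize}
\item The coefficient at $j=k$ is $(\beta)_k$, so it vanishes only if $\beta\in\{0,-1,\dots,-(k-1)\}$.
\item The coefficient at $j=0$ is $(\gamma)_k$, so it vanishes only if $\gamma\in\{0,-1,\dots,-(k-1)\}$.
\item With $\beta,\gamma$ such integers, $(\beta)_j=0$ if and only if $j\ge -\beta+1$. Also $(\gamma+j)_{k-j}=0$ if and only if $\gamma+j\le 0\le \gamma+k-1$, that is, $j\le-\gamma$.
\end{itemize}
Every index $j\in[0,k]$ must satisfy one of these two conditions. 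The critical index is $j=-\beta$, which forces $-\beta\le-\gamma$. Conversely, if $0\le-\beta\le-\gamma\le k-1$, the two ranges $j\le-\gamma$ and $j\ge-\beta+1$ cover $[0,k]$. This proves the equivalence.

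For parts 2 and 3, I would first check that $y(z)=\cF_k(\beta;\gamma;z)$ satisfies the Gauss equation
\[
z(1-z)y''+\{\gamma-(\beta-k+1)z\}y'+k\beta\,y=0 .
\]
For $\gamma\notin\bZ_{\le 0}$ this holds because $y=(\gamma)_k\,\hgF(-k,\beta;\gamma;z)$. The identity is polynomial in $(\beta,\gamma)$ coefficientwise, so it extends to all $\gamma$. Since $x\ne 0,1$ is an ordinary point, any solution with $y(x)=y'(x)=0$ vanishes identically. So each part reduces to showing $y'(x)=0$.

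For part 3, shifting the summation index $j=i+1$ gives
\[
\frac{d}{dz}\cF_k(\beta;\gamma;z)=-k\beta\,\cF_{k-1}(\beta+1;\gamma+1;z).
\]
Hence the hypotheses give $y(x)=y'(x)=0$.

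For part 2, I would use $(\gamma-1+j)_{k-j}(\gamma+k-1)=(\gamma-1+j)(\gamma+j)_{k-j}$, which gives
\[
(\gamma+k-1)\,\cF_k(\beta;\gamma-1;z)=(\gamma-1+\theta)\,\cF_k(\beta;\gamma;z),\qquad \theta:=z\tfrac{d}{dz}.
\]
There are two cases.
\begin{itemize}
\item If $\gamma+k-1\ne0$, then the hypotheses give $(\gamma-1)y(x)+xy'(x)=0$ and $y(x)=0$. Since $x\ne0$, this yields $y'(x)=0$.
\item If $\gamma=1-k$, the identity says $(\gamma-1+\theta)y=0$, so $y=cz^{k}$. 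Then $y(x)=0$ with $x\ne0$ forces $c=0$.
\end{itemize}

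The only delicate point is the degenerate parameter values, namely $\gamma\in\bZ_{\le0}$ and $\gamma+k-1=0$. In those cases the hypergeometric normalization breaks down, and I would handle them through the polynomial-identity argument and the separate monomial case above.
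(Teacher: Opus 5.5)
Your proposal is correct and takes the route the paper indicates: the paper cites the lemma from earlier work and only remarks that (1) is the vanishing of all coefficients, while (2) and (3) amount to the initial condition $u(x)=u'(x)=0$ at the ordinary point $x$ of the hypergeometric equation. Your derivative identity $\frac{d}{dz}\cF_k(\beta;\gamma;z)=-k\beta\,\cF_{k-1}(\beta+1;\gamma+1;z)$, the identity $(\gamma+k-1)\cF_k(\beta;\gamma-1;z)=(\gamma-1+\theta)\cF_k(\beta;\gamma;z)$, and your separate handling of the degenerate case $\gamma=1-k$ correctly fill in the details the paper leaves implicit.
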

\par
Assertion (1) of Lemma \ref{lem:thgs} provides the condition for the 
vanishing of all coefficients of the polynomial $u(z) := \cF_k(\beta; \gamma; z)$,   
which is a solution to a hypergeometric differential equation.   
The assumptions in (2) and (3) are equivalent to the initial condition 
$u(x) = u'(x) = 0$ at the regular point $x$, which forces $u(z) \equiv 0$.  
\par
The following, seemingly technical, proposition will play an important 
role in proving Lemma \ref{lem:dich}, one of the key lemmas to establish 
some parts of Theorems \ref{thm:a-x} and \ref{thm:gpf}.     
\begin{proposition} \label{prop:avoid} 
In Proposition $\ref{prop:p-p}$. if $r a/(r-p) \not \in \bZ$ then 
$0 \not\in J_p$, while we always have $0 \not \in J_q$. 
\end{proposition}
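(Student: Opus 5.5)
The plan is to turn each membership $0 \in J_p$ or $0 \in J_q$ into the vanishing of the polynomial $P(w;\lambda)$ at a specific point. We then combine this with the identical vanishing of $\Phi(w;\lambda)$ from Proposition \ref{prop:Phi-P}, and conclude using the terminating-sum evaluations of \S\ref{ss:thgs} together with Lemma \ref{lem:thgs}.

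By Proposition \ref{prop:Phi-P}, $P(w;\lambda) = c(w+v_1)\cdots(w+v_r)$ with $c \neq 0$, and by \eqref{eqn:division} the factors $w+v_i$ are among the linear factors listed there. If $0 \in J_p$, then $w - a/(r-p)$ divides $P$, so $P(\xi_0;\lambda)=0$ with $\xi_0 = a/(r-p)$. If $0 \in J_q$, then $w - 1/(2r)$ divides $P$, so $P(\eta_0;\lambda)=0$ with $\eta_0 = 1/(2r)$. In both cases $\Phi(\xi_0;\lambda) = \Phi(\eta_0;\lambda) = 0$ automatically, since $\Phi \equiv 0$ for a solution.

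For $J_p$, assume $r\xi_0 = ra/(r-p) \notin \bZ$ and apply Lemma \ref{lem:sol-p} with $j=0$. Since $\cF_0 = 1$, we get
\[
\Phi(\xi_0;\lambda) = \pm (r\xi_0)_p\, \cF_k(b_0-1/2; b_0; x), \qquad P(\xi_0;\lambda) = \pm (r\xi_0)_{p+1}\, \cF_k(b_0-1/2; b_0-1; x),
\]
where $k = r-p-1$ and $b_0 = 2 - r(\xi_0+1)$. The Pochhammer factors are nonzero because $r\xi_0 \notin \bZ$. If $k = 0$, then $\Phi(\xi_0;\lambda) = \pm(r\xi_0)_p \neq 0$, which is already a contradiction. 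If $k \ge 1$, then both $\cF_k(\beta;\gamma;x)$ and $\cF_k(\beta;\gamma-1;x)$ vanish with $\beta = b_0 - 1/2$ and $\gamma = b_0$. Lemma \ref{lem:thgs}(2) then makes $\cF_k(\beta;\gamma;z)$ vanish identically, and Lemma \ref{lem:thgs}(1) forces $\gamma = b_0 \in \bZ$. This is impossible, since $b_0 = 2 - r - r\xi_0 \notin \bZ$. Hence $0 \notin J_p$.

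For $J_q$, use Lemma \ref{lem:sol-q} with $j=0$. Here no hypothesis is needed, because $\cF_0 = 1$ and $r\eta_0 = 1/2$ is never an integer. We obtain
\[
\Phi(\eta_0;\lambda) = \pm\, \cF_{r-1}(d_0; 3/2-r; x), \qquad 2P(\eta_0;\lambda) = \pm\, \cF_{r-1}(d_0; 1/2-r; x).
\]
If $r=1$, then $\Phi(\eta_0;\lambda) = \pm 1 \neq 0$, a contradiction. Otherwise, Lemma \ref{lem:thgs}(2) with $\gamma = 3/2 - r$ gives identical vanishing. Lemma \ref{lem:thgs}(1) then requires $3/2 - r \in \bZ$, which is false. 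So $0 \notin J_q$ always.

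The only delicate points are the following bookkeeping checks:
\begin{itemize}
\item the sign conventions linking $J_p, J_q$ to the roots $\xi_0, \eta_0$ of $P$;
\item the nonvanishing of the Pochhammer prefactors, which is exactly where the hypothesis $ra/(r-p) \notin \bZ$ enters;
\item the degenerate cases $k=0$ and $r=1$, where Lemma \ref{lem:thgs} (stated for $k \in \bN$) does not apply but $\cF_0 = 1$ settles matters directly.
\end{itemize}
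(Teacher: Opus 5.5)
Your proposal is correct and follows the paper's proof essentially verbatim. As in the paper, you evaluate $\Phi$ and $P$ at $\xi_0$ (resp.\ $\eta_0$) via Lemmas \ref{lem:sol-p} and \ref{lem:sol-q} with $j=0$, combine $\Phi\equiv 0$ with $P(\xi_0;\lambda)=0$ (resp.\ $P(\eta_0;\lambda)=0$) to get two vanishing terminating sums, and reach a contradiction through Lemma \ref{lem:thgs}(2) followed by (1). Your separate treatment of the degenerate cases $r-p-1=0$ and $r=1$ is valid but never needed: the congruence $p\equiv r \bmod 2$ in \eqref{eqn:integral}, together with $0<p<r$, already forces $r-p\ge 2$.
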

{\it Proof}. 
By Proposition \ref{prop:Phi-P} we have $\Phi(w; \lambda) \equiv 0$. 
Since $r \xi_0 = r a/(r-p) \not \in \bZ$, 
equations \eqref{eqn:sol-p} with $j = 0$ yield 
\begin{align*}
\Phi(\xi_0; \lambda) &= (-1)^{r-p-1} \cdot (r \xi_0)_p \cdot \cF_{r-p-1}(b_0-1/2; b_0; x) = 0, \\
P(\xi_0; \lambda) &= (-1)^{r-p-1} \cdot (r \xi_0)_{p+1} \cdot \cF_{r-p-1}(b_0-1/2; b_0-1; x),   
\end{align*}
where $(r \xi_0)_p \neq 0$ and $(r \xi_0)_{p+1} \neq 0$. 
Moreover, If $0 \in J_p$, then Proposition \ref{prop:Phi-P} implies $P(\xi_0; \lambda) = 0$, 
and hence $\cF_{r-p-1}(b_0 -1/2; b_0; x) = \cF_{r-p-1}(b_0-1/2; b_0 - 1; x) = 0$.  
By assertion (2) of Lemma \ref{lem:thgs} we have $\cF_{r-p-1}(b_0-1/2; b_0; z) \equiv 0$. 
However, by assertion (1) of the same lemma we have $\cF_{r-p-1}(b_0-1/2; b_0; z) \not \equiv 0$, 
since $b_0 = 2 - r -r a/(r-a) \not \in \bZ$.   
This contradiction shows that if $r a/(r-p) \not \in \bZ$ then $0 \not \in J_p$.  
\par
In a similar mennar, equations \eqref{eqn:sol-q} with $j = 0$ yield 
\begin{align*}
\Phi(\eta_0; \lambda) &= (-1)^{r-1} \cdot \cF_{r-1}(d_0; \, 3/2 -r; \, x) = 0,  \\ 
2 \, P(\eta_0; \lambda) &= (-1)^{r-1} \cdot \cF_{r-1}(d_0; \, 1/2-r; \, x).  
\end{align*}
If $0 \in J_q$ then Proposition \ref{prop:Phi-P} implies $P(\eta_0; \lambda) = 0$ and hence 
$\cF_{r-1}(d_0; \, 3/2 -r; \, x) =\cF_{r-1}(d_0; \, 1/2-r; \, x) = 0$. 
By assertion (2) of Lemma \ref{lem:thgs} we have $\cF_{r-1}(d_0; 3/2 -r; z) \equiv 0$. 
However, by assertion (1) of the same lemma we have $\cF_{r-1}(d_0; 3/2 -r; z) \not \equiv 0$, 
since $3/2-r$ is not an integer. 
This contradiction shows $0 \not \in J_q$. \hfill $\Box$ 
\section{Duality and Reciprocity} \label{sec:d-r}  
In addition to $\hgF(\ba; z)$ the Gauss hypergeometric equation with parameters 
$\ba = (\alpha, \beta, \gamma)$ has solutions 
\begin{align*}
{}_2G_1(\ba; z) 
&:= z^{1-\gamma} (1-z)^{\gamma-\alpha-\beta} \, 
\hgF(1-\alpha, 1-\beta; 2 - \gamma; z),  \\
{}_2H_1(\ba; z) 
&:= z^{1-\gamma}(1-z)^{\gamma-\alpha-\beta} \,  
\hgF(1-\alpha, 1-\beta; \gamma-\alpha-\beta+1; 1-z).    
\end{align*}
Given $\lambda = (p, q, r; a, b ;x) $, in the same manner as $f(w; \lambda)$ is 
derived from $\hgF(\ba; z)$, two additional functions $g(w; \lambda)$ and $h(w; \lambda)$ 
are defined by substituting $\ba = (p w + a, q w + b, rw)$ and $z = x$ into $\hgG(\ba; z)$ 
and $\hgH(\ba; z)$ respectively.  
A straightforward calculation shows that $g(w; \lambda)$ and $h(w; \lambda)$ can be 
represented as 
\begin{subequations} \label{eqn:gh(w;l)}
\begin{alignat}{2}
g (w ; \lambda) 
&= x^{1- rw} (1-x)^{(r-p-q) w -a-b} \, f (w'+1; \lambda'), \qquad 
& w' &:= 2/r -1-w,  \label{eqn:g(w;l)} 
\\[1mm] 
h(w; \lambda)
&= x^{1 - r w} (1-x)^{(r-p-q) w-a-b} \, f(w+c; \check{\lambda}), \qquad 
& c &:= \frac{1-a-b}{r-p-q},   \label{eqn:h(w;l)}
\end{alignat}
\end{subequations}
where $\lambda'$ and $\check{\lambda}$ are the {\sl dual} and {\sl reciplocal} 
to $\lambda$ defined by formulas \eqref{eqn:duality} and \eqref{eqn:recip} respectively.    
\par
Problems $\rI$ and $\rII$ make sense not only for 
$f(w; \lambda)$ but also for $g(w; \lambda)$ and $h(w; \lambda)$. 
Consider those integral solutions to Problem $\rII$ which come from contiguous relations.  
If $\lambda$ is such a solution relative to $f(w; \lambda)$, then $\lambda$ is also 
a solutoin of the same sort relative to $g(w; \lambda)$ and $h(w; \lambda)$; see 
Lemma \cite[Lemma 3.1]{Iwasaki2}.   
This fact and relations \eqref{eqn:gh(w;l)} bring two symmetries to Problem $\rII$, 
which further pass to Problem $\rI$, if solutions to the former problem lift to the latter one.  
This is just the case that occurs in the region $\cI$; see Theorems \ref{thm:lift} 
and \ref{thm:cfcr}. 
\subsection{Duality} \label{ss:dual}
The aim of this subsection is to show that duality $\lambda \mapsto \lambda'$ 
transforms every non-elementary solution in $\cI$ into another in $\cI$,  
and to determine the general form of the dual GPF.   
The result is stated in Proposition \ref{prop:GPF-d}.  
\par
Formula \eqref{eqn:g(w;l)} and the one with $w$ replaced by $w+1$, 
and hence with $w'$ replaced by $w'-1$, yield  
\begin{equation} \label{eqn:ratio1}
\frac{g(w+1; \lambda)}{g(w; \lambda)} = \frac{(1-x)^{r-p-q}}{x^r} \cdot \frac{f(w'; \lambda')}{f(w' + 1; \lambda')}. 
\end{equation}
If $\lambda$ is an integral solution to Problem I\!I comming from contiguous relations, 
then by \cite[Lemma 3.1]{Iwasaki2} we have   
\begin{equation} \label{eqn:ratio2}
\frac{g(w+1; \lambda)}{g(w; \lambda)} = \Psi_g(w; \lambda) \, \frac{f(w+1; \lambda)}{f(w; \lambda)}, 
\end{equation}
where the explicit formula for $\Psi_g(w; \lambda)$ is given in \cite[formula (44)]{Iwasaki2}. 
When $\lambda$ lies in $\cI$, substituting $q = 0$ and $b = 1/2$ into this formula and 
taking the congruence $r - p \equiv 0 \bmod 2$ in \eqref{eqn:integral} into account, 
we have   
\begin{align}
\Psi_{g} (w; \lambda) 
&:= (-1)^{r-p} \dfrac{(p w+a)_p ((r-p)w-a)_{r-p} (r w-1/2)_r}{(r w-1)_r (r w)_r} 
\nonumber \\[2mm]
&= \frac{p^p (r-p)^{r-p}}{r^r} \cdot  
\frac{ \prod_{i=0}^{p-1} ( w + \frac{i+a}{p} ) 
\prod_{j=0}^{r-p-1} ( w+\frac{j-a}{r-p} ) 
\prod_{j=0}^{r-1} (w+\frac{j-1/2}{r} ) }{ \prod_{i=0}^{r-1} ( w + \frac{i-1}{r} ) 
\prod_{i=0}^{r-1} ( w + \frac{i}{r} )}.   \label{eqn:Psi-g}  
\end{align} 
Combining formula \eqref{eqn:ratio1} with \eqref{eqn:ratio2} and replacing $w$ with $w'$, 
we have 
\begin{equation} \label{eqn:ratio3}
\frac{f(w + 1; \lambda')}{f(w; \lambda')} 
= \frac{(1-x)^{r-p}}{x^r \, \Phi_g(w'; \lambda)} \cdot \frac{f(w'; \lambda)}{f(w'+1; \lambda)}, 
\qquad w' := \frac{2}{r} -1 - w. 
\end{equation}
\par
The following proposition establishes the dual GPF \eqref{eqn:gpf2} as well as 
the dual part of summation formula \eqref{eqn:sum} in Theorem \ref{thm:gpf}, thereby 
completing the proofs of Theorems \ref{thm:gpf}.(1) and \ref{thm:a-x}.(1).  
\begin{proposition} \label{prop:GPF-d}
Let $\lambda = (p, 0, r; a , 1/2; x ) \in \cI$ be a non-elementary solution 
with GPF \eqref{eqn:gpf1}.  
Equation \eqref{eqn:division} in Proposition $\ref{prop:p-p}$ allows us 
to introduce the numbers $v_1^*, \dots, v_r^*$ complementary to $v_1, \dots, v_r$ by   
\begin{equation} \label{eqn:division-ast} 
\prod_{i=1}^r (w+v_i) \prod_{i=1}^r (w+v_i^*) =  
\prod_{i=0}^{p-1}\Big(w+ \ts\frac{i+a}{p} \Big) 
\ds \prod_{j=0}^{r-p-1} \Big( w + \ts\frac{j-a}{r-p} \Big) 
\ds \prod_{j=0}^{r-1} \Big( w + \ts\frac{j-1/2}{r} \Big).   
\tag{$\ref{eqn:v*}$}  
\end{equation}
Then the dual $\lambda' \in \cI$ to the solution $\lambda$ is also a 
non-elementary solution having GPF
\begin{equation} \label{eqn:gpf2a}
f(w; \lambda') 
= C' \, \dfrac{\prod_{i=0}^{r-1} \vG(w+ \frac{i}{r})}{\prod_{i=1}^r \vG(w + v_i')}, 
\qquad v_i' := 1 - \frac{2}{r} - v_i^*, 
\tag{$\ref{eqn:gpf2}$} 
\end{equation}
with summation property $v_1' + \cdot + v_r' = (r-1)/2$. 
Moreover $g(w; \lambda)$ admits a product representation   
\begin{equation}  \label{eqn:GPF-g}
g(w ; \lambda) = D \cdot \delta^{w} \cdot 
\frac{\prod_{i=1}^{r} \sin \pi (w + v_{i}^{*})}{\prod_{i=0}^{r-1} 
\sin \pi (w +\frac{i-1}{r})} \cdot 
\frac{\prod_{i=1}^{r} \vG(w + v_{i}^{*})}{\prod_{i=0}^{r-1} 
\vG(w +\frac{i-1}{r})}, 
\end{equation}  
where $D := C' \cdot x (1-x)^{-a-1/2}$ and 
$\delta := x^{-r}(1-x)^{r-p} = p^p\, (r-p)^{r-p} \, r^{-r}$. 
\end{proposition}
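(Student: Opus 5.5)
The plan is to start from the ratio identity \eqref{eqn:ratio3} and substitute the two known rational expressions for its right-hand side. (Here $\Phi_g$ in \eqref{eqn:ratio3} is read as $\Psi_g$.) By Proposition \ref{prop:p-p} we know
\[
f(w'+1;\lambda)/f(w';\lambda)=\prod_{i=0}^{r-1}(w'+\tfrac{i}{r})\Big/\prod_{i=1}^r(w'+v_i),
\]
and $\Psi_g(w';\lambda)$ is given by \eqref{eqn:Psi-g}. The numerator of $\Psi_g$ is exactly the right-hand side of \eqref{eqn:v*}. Hence after substitution the factors $\prod(w'+v_i)$ cancel, and we get
\[
\frac{f(w+1;\lambda')}{f(w;\lambda')}=\frac{(1-x)^{r-p}r^r}{x^r p^p(r-p)^{r-p}}\cdot
\frac{\prod_{i=0}^{r-1}(w'+\frac{i-1}{r})}{\prod_{i=1}^r(w'+v_i^*)}.
\]

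The constant in front equals $1$ by Corollary \ref{cor:alg-eq}, since $Y(x;\bp)=0$; equivalently $\delta=p^p(r-p)^{r-p}r^{-r}$. Next I substitute $w'=2/r-1-w$ and read each linear factor up to sign:
\[
w'+\tfrac{i-1}{r}=-\bigl(w+\tfrac{r-1-i}{r}\bigr),\qquad w'+v_i^*=-(w+v_i').
\]
The first set $\{(r-1-i)/r\}$ runs over $\{i/r : i\in[r]\}$. The signs cancel because both products have $r$ factors. Thus $f(w+1;\lambda')/f(w;\lambda')$ is rational of the form \eqref{eqn:CF} with $d=1$, so $\lambda'$ solves Problem I\!I. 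Since $\lambda'\in\cI$ (because $p,q,r,x$ are unchanged), Theorem \ref{thm:lift} lifts this to the GPF \eqref{eqn:gpf2a}.

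The summation property then follows from \eqref{eqn:sum2}, giving $\sum v_i'=\sum_i i/r=(r-1)/2$. As a cross-check, $\sum v_i^*$ can be computed from \eqref{eqn:v*} minus $(r-1)/2$. Non-elementarity of $\lambda'$ follows from Theorem \ref{thm:recall}(1), because $b'=1/2$ is not zero or a negative integer.

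For \eqref{eqn:GPF-g}, I will use \eqref{eqn:g(w;l)} together with \eqref{eqn:gpf2a} evaluated at $w'+1=2/r-w$. This gives
\[
f(2/r-w;\lambda')=C'\prod_i\vG(\tfrac{2+i}{r}-w)\Big/\prod_i\vG(\tfrac{2}{r}-w+v_i').
\]
Here $\tfrac{2}{r}-w+v_i'=1-(w+v_i^*)$, and $\{(2+i)/r\}$ equals $\{1-(w+\frac{j-1}{r})\}+w$ after reindexing $j=r-1-i$. Applying the reflection formula $\vG(1-z)=\pi/(\sin\pi z\,\vG(z))$ to each factor, the powers of $\pi$ cancel because the counts are equal, and the sine/Gamma quotient appears. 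The prefactor $x^{1-rw}(1-x)^{(r-p)w-a-1/2}$ gives $D=C'x(1-x)^{-a-1/2}$ and $\delta^w$ with $\delta=x^{-r}(1-x)^{r-p}$.

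The main obstacle is bookkeeping rather than ideas. I have to check that the sign and constant conventions in \eqref{eqn:Psi-g} (with $r-p$ even) combine with $Y(x;\bp)=0$ to give exactly $d=1$, and that the index shifts in $\{(i-1)/r\}$ match up correctly. I would verify these on Example \ref{ex:rat-x} before committing.
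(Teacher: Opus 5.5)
Your proposal is correct and follows essentially the same route as the paper: substitute \eqref{eqn:gpf1} and \eqref{eqn:Psi-g} into \eqref{eqn:ratio3}, cancel using \eqref{eqn:v*}, reduce the constant to $1$ via $Y(x;\bp)=0$, and change variables with $w'=2/r-1-w$. Then lift by Theorem~\ref{thm:lift}, and obtain \eqref{eqn:GPF-g} from \eqref{eqn:g(w;l)} and the reflection formula. Your explicit appeal to Theorem~\ref{thm:recall}(1) (since $b'=1/2$) for the non-elementarity of $\lambda'$ is a small point the paper leaves implicit.
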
 
{\it Proof}. 
Substituting GPF \eqref{eqn:gpf1} and formula \eqref{eqn:Psi-g} into 
equation \eqref{eqn:ratio3} and using $\vG(w+1) = w \vG(w)$, we have 
\begin{align*} 
\frac{f(w +1; \lambda')}{f(w; \lambda')} 
&= \frac{r^r (1-x)^{r-p}}{p^{p} ( r - p )^{r -p} \, x^r} \cdot 
\frac{ \prod_{i=0}^{r-1}(w' + \frac{i - 1}{r})
\prod_{i=0}^{r-1}(w' + \frac{i}{r})}{\prod_{i=0}^{p-1}(w' + \frac{i+a}{p}) 
\prod_{j=0}^{r-p-1}(w' + \frac{j-a}{r-p}) 
\prod_{j=0}^{r-1}(w' + \frac{j-1/2}{r})} 
\cdot 
\frac{\prod_{i=1}^{r}(w' + v_{i})}{\prod_{i=0}^{r-1}(w' + \frac{i}{r})}
\\[1mm] 
&= 1 \cdot 
\frac {\prod_{i = 0}^{r-1}(w' + \frac{i - 1}{r})
\prod_{i=0}^{r}(w' + \frac{i}{r})}{\prod_{i=1}^{r}(w' + v_{i}) 
\prod_{i=0}^{r-p-1}(w' + v_{i}^{*})} \cdot  
\frac{\prod_{i=1}^{r}(w' + v_{i})}{\prod_{i=0}^{r-1}(w' + \frac{i}{r})} 
\\[1mm]
&= \frac{\prod_{i = 0}^{r-1}(w' + \frac{i - 1}{r})}{\prod_{i=1}^{r}(w' + v_{i}^{*})} 
= \frac{\prod_{i=0}^{r-1} \big( -w - \frac{r-1-i}{r}\big)}{ \prod_{i=1}^r (-w-v_i')}  
= \frac{\prod_{i=0}^{r-1}(w+\frac{i}{r})}{\prod_{i=1}^r (w+v_i')},  
\end{align*}
where $Y(x; \bp) = 0$ (Corollary \ref{cor:alg-eq}) and equation \eqref{eqn:division-ast} 
are used in the second equality, while $w' = 2/r-1-w$ and 
$v_i^* = 1-2/r-v_i'$ are used in the fourth equality. 
The ratio representation lifts to GPF \eqref{eqn:gpf2} by Theorem \ref{thm:lift}. 
Summation $v_1' + \cdots + v_r' = (r-1)/2$ is the dual version of 
$v_1 + \cdots + v_r = (r-1)/2$ proved in Proposition \ref{prop:p-p}. 
\par
Substituting GPF \eqref{eqn:gpf2} into formula \eqref{eqn:g(w;l)} with $q = 0$ and $b = 1/2$, we have    
\begin{align*}
g (w ; \lambda) 
&= x (1-x)^{-a-1/2} \cdot \left \{ x^{-r} (1-x)^{r-p} \right\}^w 
\cdot f \left (\ts\frac{2}{r}-w; \lambda' \right) 
\\[1mm]
&= x(1-x)^{-a-1/2} \cdot \delta^w 
\cdot C' \cdot \frac{\prod_{i=0}^{r-1} 
\vG \left (\frac{2}{r} - w + \frac{i}{r} \right)}{\prod_{i=1}^{r}
\vG (\frac{2}{r}-w+v'_i)} 
= D \cdot \delta^{w}\cdot 
\frac{\prod_{i=0}^{r-1}\vG\left(1-(w+\frac{i-1}{r})\right)}{\prod_{i=1}^r \vG(1- (w + v^*_i))} 
\\[2mm]
&= D \cdot \delta^{w} \cdot \frac{\prod_{i=1}^r
\sin \pi (w+v^*_i)}{\prod_{i=0}^{r-1} 
\sin \pi (w+\frac{i-1}{r})} \cdot \frac{\prod^{r}_{i=1}
\vG (w+v_i^*)}{\prod_{i=0}^{r-1}\vG(w + \frac{i-1}{r})},  
\end{align*}
where the last equality follows from the formula 
$\vG(w) \, \vG(1-w) = \pi/(\sin \pi w)$. 
Finally, $\delta := x^{-r}(1-x)^{r-p} = p^p\, (r-p)^{r-p} \, r^{-r}$ 
is again due to $Y(x; \bp) = 0$ in Corollary $\ref{cor:alg-eq}$.  
This proves formula \eqref{eqn:GPF-g}. 
\hfill $\Box$
\subsection{Reciprocity and Connection Formula} \label{ss:r-cf} 
With the help of duality and a connection formula for hypergeometric functions, 
we shall show that reciprocity $\lambda \mapsto \lambda'$ transforms every  
non-elementary solution in $\cI$ into a solution in $\cJ$ (to Problem I);    
see Proposition \ref{prop:GPF-r}.  
\par
Connection formula representing $\hgH(\ba; z)$ in terms of  
$\hgF(\ba; z)$ and $\hgG(\ba; z)$ leads to an expression 
\begin{equation} \label{eqn:cf}
h(w ; \lambda) =C_{f}(w) \, f (w; \lambda) + C_{g}(w) \, g(w; \lambda),  
\end{equation}
as in \cite[formula (64)]{Iwasaki2}, where in the case of 
$\lambda = (p, 0, r; a, 1/2; x) \in \cI$ the coefficients are given by   
$$
C_{f}(w) = \frac{\vG((r-p)w+1/2-a)
\vG(1-r{w})}{\vG(1-a-p{w}) \, \vG(1/2)}, 
\qquad
C_{g}(w) = \frac{\vG((r-p)w+1/2-a)
\vG(r{w}-1)}{\vG((r-p)w-a) \, \vG(r{w}-1/2)}. 
$$
\begin{lemma} \label{lem:h}
If $\lambda = (p, 0, r; a, 1/2; x) \in \cI$ is a non-elementary solution 
with GPF \eqref{eqn:GPF}, then 
\begin{subequations} \label{eqn:h-chi}
\begin{align} 
h(w ; \lambda) &= \vG((r-p) w+1/2-a) \cdot 
(p^p/r^{r})^w \cdot \chi(w) \cdot 
\frac{\prod_{i=0}^{p-1}\vG(w + \frac{i+a}{p})}{\prod_{i=1}^r \vG(w + v_i)}, 
\label{eqn:h} \\[2mm]  
\chi (w) &:= C_3 \cdot \frac{\sin\pi(p w+a)}{\sin(\pi r w) } + C_4 \cdot 
\frac{\prod_{i=1}^r \sin\pi(w+v_i^*)}{\prod_{i=0}^{r-1}\sin\pi(w + \frac{i-1}{r})},   
\label{eqn:chi} 
\end{align} 
\end{subequations} 
where $C_3 :=2^{1/2} \cdot (2\pi)^{(r-p-1)/2} \cdot C \cdot p^{a-1/2} \cdot r^{1/2}$ 
and $C_4 :=(2\pi)^{(r-p-1)/2} \cdot D \cdot (r-p)^{a+1/2} \cdot r^{-1/2}$  
with $C$ and $D$ being the constants in formulas \eqref{eqn:gpf1} and \eqref{eqn:GPF-g}.   
\end{lemma}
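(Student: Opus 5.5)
The proof is a direct substitution into the connection formula \eqref{eqn:cf}. We take GPF \eqref{eqn:gpf1} for $f(w;\lambda)$ and the product representation \eqref{eqn:GPF-g} for $g(w;\lambda)$. We then rewrite each of the two terms $C_f(w)f(w;\lambda)$ and $C_g(w)g(w;\lambda)$ in the common form
\[
\vG((r-p)w+1/2-a)\,(p^p/r^r)^w \cdot (\text{trigonometric factor}) \cdot \frac{\prod_{i=0}^{p-1}\vG(w+\frac{i+a}{p})}{\prod_{i=1}^r\vG(w+v_i)} .
\]
After factoring this out, the remaining sum is $\chi(w)$.

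\textbf{First term.} We have
\[
C_f(w)f(w;\lambda)=\frac{\vG((r-p)w+1/2-a)\,\vG(1-rw)}{\vG(1-a-pw)\,\vG(1/2)}\cdot C\,\frac{\prod_{i=0}^{r-1}\vG(w+\frac ir)}{\prod_{i=1}^r\vG(w+v_i)} .
\]
We apply the reflection formula $\vG(z)\vG(1-z)=\pi/\sin\pi z$ twice:
\[
\vG(1-rw)=\frac{\pi}{\sin(\pi rw)\,\vG(rw)},\qquad \frac{1}{\vG(1-a-pw)}=\frac{\sin\pi(pw+a)}{\pi}\,\vG(pw+a).
\]
Next, Gauss's multiplication formula \eqref{eqn:GMF} gives $\prod_{i=0}^{r-1}\vG(w+\frac ir)=(2\pi)^{(r-1)/2}r^{1/2-rw}\vG(rw)$, so the factor $\vG(rw)$ cancels. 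The same formula in the form $\vG(pw+a)=\vG(p(w+a/p))=(2\pi)^{(1-p)/2}p^{pw+a-1/2}\prod_{i=0}^{p-1}\vG(w+\frac{i+a}{p})$ supplies the numerator product. With $\vG(1/2)=\sqrt\pi$, this term becomes
\[
\frac{\sin\pi(pw+a)}{\sin\pi rw}\,(p^p/r^r)^w \quad\text{times}\quad C\,(2\pi)^{(r-p)/2}\pi^{-1/2}\,p^{a-1/2}r^{1/2}.
\]
Since $(2\pi)^{1/2}\pi^{-1/2}=2^{1/2}$, the constant equals $2^{1/2}(2\pi)^{(r-p-1)/2}C\,p^{a-1/2}r^{1/2}=C_3$.

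\textbf{Second term.} We have
\[
C_g(w)=\frac{\vG((r-p)w+1/2-a)\,\vG(rw-1)}{\vG((r-p)w-a)\,\vG(rw-1/2)} .
\]
By Gauss's formula, $\vG(rw-1)=\vG(r(w-1/r))$ combines with the factor $\prod_{i=0}^{r-1}\vG(w+\frac{i-1}{r})^{-1}$ in \eqref{eqn:GPF-g}, leaving $(2\pi)^{(1-r)/2}r^{rw-3/2}$. Likewise $\vG((r-p)w-a)$ and $\vG(rw-1/2)$ expand as $\prod_{j=0}^{r-p-1}\vG(w+\frac{j-a}{r-p})$ and $\prod_{j=0}^{r-1}\vG(w+\frac{j-1/2}{r})$, up to explicit powers. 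Now the factorization \eqref{eqn:v*}, read as an identity of multisets of roots, gives the Gamma-level identity
\[
\prod_{i=1}^r\vG(w+v_i)\prod_{i=1}^r\vG(w+v_i^*)=\prod_{i=0}^{p-1}\vG(w+\tfrac{i+a}{p})\prod_{j=0}^{r-p-1}\vG(w+\tfrac{j-a}{r-p})\prod_{j=0}^{r-1}\vG(w+\tfrac{j-1/2}{r}).
\]
Using it, $\prod_{i=1}^r\vG(w+v_i^*)/\bigl(\prod_j\vG(w+\frac{j-a}{r-p})\prod_j\vG(w+\frac{j-1/2}{r})\bigr)$ becomes $\prod_{i=0}^{p-1}\vG(w+\frac{i+a}{p})/\prod_{i=1}^r\vG(w+v_i)$, which is the required shape.

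The exponential factors collect into $\delta^w(r-p)^{-(r-p)w}r^{rw}r^{-rw}$. Since $\delta=p^p(r-p)^{r-p}r^{-r}$, this equals $(p^p/r^r)^w$, matching the first term. The constant powers are $(2\pi)^{(1-r)/2}$ from $\vG(rw-1)$, $(2\pi)^{(r-p-1)/2}$ from the inverse of the $(r-p)$-expansion, and $(2\pi)^{(r-1)/2}$ from the inverse of the $r$-expansion. Their product is $(2\pi)^{(r-p-1)/2}$. The remaining factors $r^{-3/2}\cdot(r-p)^{a+1/2}\cdot r^{1}$ give $(r-p)^{a+1/2}r^{-1/2}$, so the constant is $D(2\pi)^{(r-p-1)/2}(r-p)^{a+1/2}r^{-1/2}=C_4$. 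The sine ratio from \eqref{eqn:GPF-g} is carried along unchanged, which yields the second summand of $\chi$.

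The only real obstacle is bookkeeping. One must check that all $w$-dependent exponentials reduce to the same base $p^p/r^r$; this is exactly where $Y(x;\bp)=0$, entering through $\delta$, is used. One must also track the half-integer powers of $2\pi$, $p$, $r$ and $r-p$ so that they reproduce $C_3$ and $C_4$ exactly. I would verify these constants by carefully applying \eqref{eqn:GMF} with shifted arguments, such as $\vG(k(w+c))$.
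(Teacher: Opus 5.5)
Your proposal is correct and follows the paper's proof essentially step for step. For $C_f(w)f(w;\lambda)$ you use the reflection formula together with Gauss's multiplication formula. For $C_g(w)g(w;\lambda)$ you use Gauss's multiplication formula together with the factorization \eqref{eqn:v*} lifted to Gamma products, and $\delta\,(r-p)^{-(r-p)}=p^p/r^r$ aligns the exponentials. Your constant bookkeeping reproduces $C_3$ and $C_4$ exactly, and you also get the product ranges right where the paper's displays \eqref{eqn:Cf} and \eqref{eqn:Cg} contain index typos.
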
 
{\it Proof}. 
Reflection formula $\vG(w) \vG(1-w) = \pi/(\sin \pi w)$ and 
Gauss's multiplication formula \eqref{eqn:GMF} allow us to rewrite 
\begin{align*}
C_f(w) 
& =\vG((r-p)w+1/2-a) \cdot \frac{\sin\pi(p{w}+a)}{\sin(\pi r w)}
\cdot \frac{\vG(p{w}+a)}{\Gamma(r w) \, \vG(1/2)} 
\\ 
& =\vG((r-p)w+1/2-a) \cdot (p^p / r^{r})^w \cdot 
C_3' \cdot \frac{\sin\pi(p{w}+a)}{\sin(\pi r w)} \cdot 
\frac{\prod_{i=0}^{p-1}\vG(w+\frac{i+a}{p})}{\prod_{i=0}^{r-1}
\vG(w+\frac{i}{r}) }, 
\end{align*}
where $C_3' := 2^{1/2} \cdot (2\pi)^{(r-p-1)/2} \cdot p^{a-1/2} \cdot r^{1/2}$.  
It is multiplied by GPF \eqref{eqn:gpf1} to yield  
\begin{align} \label{eqn:Cf}
C_f(w) \, f (w;\lambda) = 
\vG ((r-p)w+1/2-a) \cdot (p^p / r^{r})^w \cdot 
C_3 \cdot \frac{\sin \pi (p w+a)}{\sin(\pi r w)} \cdot 
\frac{\prod_{i=0}^{p-1} 
\vG(w + \frac{i+a}{p})}{\prod_{i=0}^{r-1} \vG(w + v_i)}. 
\end{align}
\par
On the other hand, Gauss's multiplication formula \eqref{eqn:GMF} 
and equation \eqref{eqn:v*} lead to  
\begin{align*}
C_g (w) 
&= \vG((r-p)w+1/2-a) \cdot C_4' \cdot (r-p)^{-(r-p)w} \cdot 
\frac{ \prod_{i=0}^{r-1}\vG(w + \frac{i-1}{r})}{\prod_{j=0}^{r-p-1} 
\vG(w + \frac{j-a}{r-p}) \prod_{j=0}^{r-1} \vG(w + \frac{j-1/2}{r})} 
\\[1mm]
&= \vG((r-p)w+1/2-a) \cdot C_4' \cdot (r-p)^{-(r-p)w} \cdot 
\frac{ \prod_{i=0}^{p-1}\vG(w+\frac{i+a}{p}) \prod_{i=0}^{r-1} 
\vG(w+\frac{i-1}{r}) }{\prod_{j=1}^r \vG(w+v_i) \prod_{i=1}^r \vG(w+v_i^*)},  
\end{align*}
where $C_4' := (2\pi)^{(r-p-1)/2}\cdot (r-p)^{a+1/2} \cdot r^{-1/2}$. 
It is multiplied by formula \eqref{eqn:GPF-g} to yield  
\begin{equation} \label{eqn:Cg}
C_{g}(w) \, g(w; \lambda) 
= \vG((r-p)w+1/2-a) \cdot (p^p / r^{r})^w \cdot C_4 \cdot 
\frac{ \prod_{i=1}^{r} \sin \pi (w + v_i^*)}{ \prod_{i=0}^{r-1} 
\sin \pi (w+\frac{i-1}{r})} \cdot \frac{\prod_{i=0}^{r-1} 
\vG(w+\frac{i+a}{p})}{ \prod_{i=1}^{r} \vG(w+v_i)},  
\end{equation}
where $\delta \cdot (r-p)^{-(r-p)} = p^p / r^{r}$ is also used. 
Substituting \eqref{eqn:Cf} and \eqref{eqn:Cg} into \eqref{eqn:cf} 
we obtain \eqref{eqn:h-chi}. \hfill $\Box$
\begin{lemma} \label{lem:chi}
The function $\chi(w)$ in \eqref{eqn:chi} must be a constant, say 
$C_{5}$, so that equation \eqref{eqn:h} becomes 
\begin{equation} \label{eqn:h/G}
\frac{h(w; \lambda)}{\vG((r-p)w+1/2-a)} = C_5 \cdot (p^p / r^{r})^w \cdot 
\frac{\prod_{i=0}^{p-1}\vG(w+\frac{i+a}{p})}{\prod_{i=1}^r \vG(w+v_i)}.
\end{equation}
\end{lemma}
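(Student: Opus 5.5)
The plan is to show that $\chi(w)$ is an entire, periodic function of $w$ that stays bounded as $\rIm w \to \pm\infty$, so that Liouville's theorem, applied in the variable $\tau := \re^{\pi \ri w}$, forces it to be constant.

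\emph{Periodicity and rationality.} Under $w \mapsto w+1$, the factor $\sin\pi(pw+a)$ changes by $(-1)^p$ and $\sin(\pi r w)$ by $(-1)^r$. The second term of \eqref{eqn:chi} has $r$ sine factors above and $r$ below, so its signs cancel. Since $p \equiv r \bmod 2$ by \eqref{eqn:integral}, $\chi$ is $1$-periodic. Each factor $\sin\pi(w+v)$ is, up to a constant, $\tau - \re^{-2\pi \ri v}\tau^{-1}$ (times $1/(2\ri)$ and a phase). Hence $\chi$ is a rational function of $\tau$ with no poles on $\bC^\times_\tau$ other than those coming from zeros of the denominators. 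These occur only at real $w$ (the numbers $a$, $v_i^*$ are real or come in conjugate pairs; only the denominators $\sin\pi rw$ and $\sin\pi(w+\frac{i-1}{r})$ matter for poles, and they vanish only at real $w$).

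\emph{No poles.} Solve \eqref{eqn:h} for $\chi$:
\[
\chi(w) = \frac{h(w;\lambda)}{\vG((r-p)w+1/2-a)} \cdot (r^r/p^p)^w \cdot \frac{\prod_{i=1}^r \vG(w+v_i)}{\prod_{i=0}^{p-1}\vG(w+\frac{i+a}{p})}.
\]
First I check that $h(w;\lambda)/\vG((r-p)w+1/2-a)$ is entire. In the definition of $\hgH$, the lower parameter is $\gamma-\alpha-\beta+1 = (r-p)w-a+1/2$. Since $\hgF(\alpha',\beta';\gamma';1-x)/\vG(\gamma')$ is entire in all parameters for $0<1-x<1$, this quotient is entire; the prefactor $x^{1-rw}(1-x)^{\cdots}$ is entire and nonvanishing. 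Next, the gamma quotient on the right has poles only at $w = -v_i - n$ with $n \in \bZ_{\ge 0}$, so it is holomorphic for $\rRe w$ large. Therefore $\chi$ is holomorphic on a right half-plane. By $1$-periodicity, $\chi$ is entire.

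\emph{Boundedness and conclusion.} As $\rIm w \to \pm\infty$, the first term of \eqref{eqn:chi} behaves like $\re^{-\pi(r-p)|\rIm w|} \to 0$ because $p<r$. The second term is a ratio of $r$ sines by $r$ sines, so it tends to finite constants. Thus, as a Laurent polynomial in $\tau$ (entire on $\bC^\times$ and rational), $\chi$ is bounded as $\tau\to 0$ and $\tau \to \infty$. Hence it is constant, $\chi \equiv C_5$, and \eqref{eqn:h} becomes \eqref{eqn:h/G}.

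The main obstacle is the entireness step. It needs care that $h/\vG(\cdots)$ really is entire in $w$: the argument is $1-x$ with $0<1-x<1$, and the other parameters $1-\alpha$, $1-\beta$ cause no singularities. It also needs that the gamma quotient introduces no spurious poles far to the right, which holds since all $v_i$ and $(i+a)/p$ are fixed finite numbers. Once these are in place, periodicity transports holomorphy from a right half-plane to all of $\bC$, and the growth estimate is routine.
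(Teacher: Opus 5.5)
Your proposal is correct and follows essentially the same route as the paper's proof. Both arguments show that $h(w;\lambda)/\vG((r-p)w+1/2-a)$ is entire, so $\chi$ is holomorphic on a right half-plane. Both then use $1$-periodicity of $\chi$ to get that it is entire, and both use the sine growth estimates as $|\rIm\, w|\to\infty$ to get boundedness before applying Liouville. Your only deviation is cosmetic: you conclude via the Laurent-polynomial structure in $\tau=\re^{\pi\ri w}$, whereas the paper bounds $\chi$ on the strip $|\rIm\, w|\le 1$ and applies Liouville directly in $w$.
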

{\it Proof}. 
Let $\tilde{h} (w; \lambda)$ denote the left-hand side of equation \eqref{eqn:h/G}. 
Notice that $\tilde{h}(w; \lambda)$ is an entire function, since the poles of 
$h(w; \lambda)$ are eliminated by those of $\vG((r-p)w+1/2-a)$. 
Formula \eqref{eqn:h} yields 
$$
\chi (w) = (p^p / r^{r})^{-w} \cdot \tilde{h} (w;\lambda) \cdot 
\frac{\prod_{i=1}^r \vG(w + v_i)}{\prod_{i=0}^{p-1}\vG(w + \frac{i+a}{p})},  
$$
which shows that $\chi(w)$ is holomorphic in $\rRe \, w > -\min\{v_i \, \mid \, i = 1, \dots, r \} $, 
while \eqref{eqn:chi} implies that $\chi (w)$ is a periodic function of period $1$.  
Thus $\chi (w)$ is an entire periodic function of period $1$.  
In particular $\chi (w)$ is bounded on the strip $|\rIm \, w| \le 1$. 
Outside the strip, applying the estimate $e^{|\rIm \, w|}/4 \le |\sin \, w|\le e^{|\rIm \, w|}$ 
to formula \eqref{eqn:chi}, we find that $\chi (w)$ is also bounded in $|\rIm \, w| \ge 1$. 
Hence $\chi (w)$ is a bounded entire function, so Liouville's theorem 
forces it to be a constant $C_5$, leading to equation \eqref{eqn:h/G}.  
\hfill $\Box$ \par\medskip
The following lemma will play a key role in the present and next subsections.  
\begin{lemma} \label{lem:dich}
Let $\lambda = (p, 0, r; a, 1/2; x ) \in \cI$ be any non-elementary solution. 
\begin{enumerate}
\setlength{\itemsep}{-1pt}
\item There exists a nonnegative integer $j$ such that either 
\begin{equation} \label{eqn:dich} 
(\rI) \quad a = \frac{p j}{r} \qquad \mbox{or} \qquad 
(\rII) \quad a = \frac{p(j+1/2)}{r}.  
\end{equation}
\item The numbers $a$, $v_1, \dots, v_r$ lie in $\bQ \cap [0, \, 1)$ and 
there exists a division relation   
\begin{equation} \label{eqn:div-a} 
\prod_{i\in \bar{I}_{p}}\left(w + \ts \frac{i+a}{p}\right) \,  \Big| 
\prod_{j\in J_{p}}\left(w + \ts \frac{j - a}{r-p}\right) 
\prod_{j\in J_{q}}\left(w +\ts \frac{j - 1/2}{r}\right), \qquad \bar{I}_p := [p] \setminus I_p.     
\end{equation} 
\end{enumerate}
\end{lemma}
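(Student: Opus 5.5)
The plan is to exploit the equation \eqref{eqn:h/G} of Lemma \ref{lem:chi} as a \emph{pole-cancellation} statement. The left-hand side $\tilde h(w;\lambda)$ is entire, and it is not identically zero, since $h(w;\lambda)$ is a genuine solution of the hypergeometric equation. Hence $C_5\neq 0$, and
\[
\frac{\prod_{i=0}^{p-1}\vG(w+\frac{i+a}{p})}{\prod_{i=1}^r \vG(w+v_i)}
\]
must be entire. Since $\vG(w+u)/\vG(w+u+n)=1/(w+u)_n$ is entire exactly when $n\in\bZ_{\ge0}$, the first step is a matching argument. Each parameter $(i+a)/p$ with $i\in[p]$ must be paired injectively with some $v_k$ satisfying $v_k-(i+a)/p\in\bZ_{\ge 0}$. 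For $i\in I_p$ the trivial pairing $v_k=(i+a)/p$ (with $n=0$) is available by \eqref{eqn:division}. So the real content concerns $i\in\bar I_p$, which must be paired with factors from the $J_p$ and $J_q$ parts. This yields \eqref{eqn:div-a}, once we know that the shifts $n$ are all zero.

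For assertion (1), note that $0\in\bar I_p$, because $I_p\subset[p]\setminus\{0\}$. Hence $a/p+n$ equals one of the roots listed in \eqref{eqn:division} for some $n\in\bZ_{\ge0}$. It cannot equal $(i'+a)/p$ with $i'\in I_p$, because $i'=pn$ with $0<i'<p$ is impossible. So the remaining cases are
\[
\frac{a}{p}+n=\frac{j-a}{r-p}\ \Longrightarrow\ a=\frac{p\,(j-(r-p)n)}{r},
\qquad
\frac{a}{p}+n=\frac{j-1/2}{r}\ \Longrightarrow\ a=\frac{p\,(j-rn-1/2)}{r}.
\]
These are of type (I) and (II) in \eqref{eqn:dich}, respectively, with the integer index $j-(r-p)n$ or $j-rn-1$. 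In particular $a\in\bQ$, and consequently every root in \eqref{eqn:division} is rational, so all $v_i\in\bQ$.

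The main obstacle is the normalization: showing that the integer index in \eqref{eqn:dich} is nonnegative, and that $a,v_1,\dots,v_r\in[0,1)$. This in turn forces all shifts $n$ to vanish, so that the matching gives an honest division \eqref{eqn:div-a}. I would attack it in two ways.

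First, I would use the identity $\chi\equiv C_5$ itself. Note that
\[
\prod_{i=0}^{r-1}\sin\pi\bigl(w+\tfrac{i-1}{r}\bigr)=\pm 2^{1-r}\sin(\pi r w).
\]
Hence $\chi\equiv C_5$ becomes the trigonometric identity
\[
C_3\sin\pi(pw+a)\pm 2^{r-1}C_4\prod_{i=1}^r\sin\pi(w+v_i^*)=C_5\sin(\pi r w).
\]
Evaluating at $w=k/r$ relates $\sin\pi(pk/r+a)$ to the zeros $-v_i^*$ modulo $1$. Comparing the extreme frequencies $e^{\pm\pi\ri r w}$ pins down $\sum v_i^*$ modulo $2$.

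Second, I would combine this with the exact sums $\sum v_i=\sum v_i'=(r-1)/2$ from Propositions \ref{prop:p-p} and \ref{prop:GPF-d}. These sums constrain how many of the roots can exceed $1$ or be negative, since each listed root lies in an interval of length about $1$ determined by $a$.

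Finally, Proposition \ref{prop:avoid} excludes the boundary indices $j=0$ in $J_q$, and also in $J_p$ when $ra/(r-p)\notin\bZ$. This removes the candidate roots $-1/(2r)$ and $-a/(r-p)$, which are the only possible negative ones. The outcome should be that all $v_i$, and $a$ itself, lie in $[0,1)$. Then $n=0$ in every pairing, the indices in \eqref{eqn:dich} are nonnegative, and \eqref{eqn:div-a} holds as an exact divisibility.
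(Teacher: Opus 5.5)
Your starting point, reading \eqref{eqn:h/G} as a pole-cancellation statement, is the same as the paper's. There are, however, two genuine problems.

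\textbf{The matching criterion has the wrong sign.} For $n\ge 1$, $\vG(w+u)/\vG(w+u+n)=1/(w+u)_n$ is \emph{not} entire. What is entire is $\vG(w+u)/\vG(w+u-n)=(w+u-n)_n$. So a numerator parameter $u=(i+a)/p$ must be matched with a denominator parameter $t$ satisfying $u-t\in\bZ_{\ge 0}$; this is the relation $S\succ T$ of \cite[Lemma 5.5]{Iwasaki2} that the paper uses.

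With the correct sign, the two cases read as follows.
\begin{itemize}
\item If $\frac{a}{p}=\frac{j_0-a}{r-p}+n$ with $j_0\in J_p$ and $n\ge 0$, then $a=pj/r$ with $j=j_0+(r-p)n\ge 0$.
\item If $\frac{a}{p}=\frac{j_0-1/2}{r}+n$ with $j_0\in J_q$, then $a=p(j+1/2)/r$ with $j=j_0+rn-1\ge 0$, because $0\notin J_q$ by Proposition \ref{prop:avoid}.
\end{itemize}
So assertion (1) is immediate. The ``normalization obstacle'' of a possibly negative index is an artifact of the sign error.

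\textbf{The upper bound on $a$ is missing.} For assertion (2) the real issue is an upper bound on $a$. This bound is what gives $0\le ra/(r-p)<1$, so that Proposition \ref{prop:avoid} removes $0$ from $J_p$ when $a>0$. It is also what places every $(i+a)/p$ and $(j-a)/(r-p)$ in $[0,1)$.

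The missing idea is duality. By Proposition \ref{prop:GPF-d} the dual $\lambda'$ is again a non-elementary solution, so assertion (1) applied to $\lambda'$ gives $a'\ge 0$. Hence $a=1-2p/r-a'\le 1-2p/r<1$.

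Neither of your proposed substitutes supplies this bound.
\begin{itemize}
\item The identity $\chi\equiv C_5$ involves $a$ and the $v_i^*$ only through $\sin\pi(\cdot)$, so only modulo $1$, with signs absorbed into the constants. The multiset of all candidate roots in \eqref{eqn:v*} is invariant modulo $1$ under $a\mapsto a+1$. The identity therefore cannot detect whether $a<1$.
\item The dual sum rule $\sum v_i'=(r-1)/2$ carries no new information. The full root multiset in \eqref{eqn:v*} sums to $r-2$, so $\sum v_i^*=(r-3)/2$, and $\sum v_i'=(r-1)/2$ then follows automatically from $\sum v_i=(r-1)/2$.
\item Your claim that $-1/(2r)$ and $-a/(r-p)$ are the only possibly negative candidate roots presupposes $a\le 1$. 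The bound $(i+a)/p<1$ presupposes $a<1$. So the sketch is circular exactly at the step you leave as ``the outcome should be''.
\end{itemize}

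Once $0\le a\le 1-2p/r$ is established via the dual, every element on both sides of \eqref{eqn:prec} lies in $[0,1)$. All shifts then vanish, and \eqref{eqn:div-a} follows as an inclusion of multisets, as in the paper.
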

{\it Proof}. 
Assertion (1). 
For multisets of real numbers $S = \{s_1, \dots, s_m\}$ and  
$T = \{ t_1, \dots , t_n \}$ we write $S \succ T$ if $m \le n$ and 
$s_i-t_i \in \bZ_{\ge 0}$ for $i = 1, \dots, m$ after a rearrangement 
of $t_1, \dots, t_n$.   
Recall from \cite[Lemma 5.5]{Iwasaki2} that  
$$
\mbox{if $\dfrac{\vG(w + s_{1}) \cdots \vG(w + s_m)}{ \vG(w + t_1)\cdots \vG(w + t_n)}$ 
is an entire function, then $S \succ T$}. 
$$
Using formula \eqref{eqn:division} in Proposition \ref{prop:p-p} and 
letting $\bar{I}_p := [p]\setminus I_p$, we can rewrite equation \eqref{eqn:h/G} as 
\begin{equation*} \label{eqn:entire}
\frac{h(w;\lambda)}{\vG((r - p)w+1/2-a)} 
= C_{5} \cdot (p^p / r^{r})^{w} \cdot 
\frac{\prod_{i \in \bar{I}_p} \vG (w+\frac{i+a}{p})}{\prod_{j\in J_p}
\vG(w+\frac{j-a}{r - p}) \prod_{j\in J_q} \vG(w+\frac{j-1/2}{r})}.    
\end{equation*}
Since the left-hand side of this equation is an entire function, 
the general fact mentioned above implies 
\begin{equation} \label{eqn:prec}
\left\{ \frac{i + a}{p} \right\}_{i \in \bar{I}_{p}} \succ 
\left\{ \frac{j-a}{r-p} \right\}_{j \in J_{p}} \bigcup \, 
\left \{ \frac{j-1/2}{r} \right \}_{j \in J_q},  
\end{equation}
where the union on the right-hand side is the one as a multiset. 
Recall from Proposition \ref{prop:p-p} that $0 \not\in I_p$ and hence $0 \in \bar{I}_{p}$. 
It then follows from \eqref{eqn:prec} with $i = 0$ that one of the following equations 
holds true:  
\begin{center}
(i) \, $\dfrac{a}{p} = \dfrac{j-a}{r-p}$ \, for some $j\in J_{p} + (r - p) \, \bZ_{\ge 0}$, 
\qquad 
(ii) \, $\dfrac{a}{p} = \dfrac{j-1/2}{r}$ \, for some $j \in J_{q} + r \, \bZ_{\ge 0}$.  
\end{center}
Case (i) directly leads to case (I) of condition \eqref{eqn:dich}. 
In case (ii), Proposition \ref{prop:avoid} shows that   
$0 \not \in J_q$ and hence $j \ge 1$. 
Replacing $j$ by $j + 1$ with $j \ge 0$, we have case (I\!I) of condition \eqref{eqn:dich}. 
\par
Assertion (2). 
It is evident from \eqref{eqn:dich} that $a \in \bQ$ and $a \ge 0$. 
For the same reason we have $a' \ge 0$, since the dual 
$\lambda' \in \cI$ to $\lambda$ is also a non-elementary solution 
by Proposition \ref{prop:GPF-d}. 
By the definition of duality \eqref{eqn:duality} we have $a = 1 - 2p/r-a' \le 1- 2 p/r$. 
Thus $a \in \bQ \cap [0, \, 1)$ and more strongly $0 \le r a /(r-p) \le (r - 2 p)/(r-p) < 1$. 
Hence $r a / (r-p)$ can be an integer only when $a = 0$. 
Accordingly, if $a > 0$ and $j \in J_p$, then Proposition \ref{prop:avoid}  
implies $1 \le j \le r-p-1$ and so $0 < \frac{j-a}{r-p} < 1$ by $a \in (0, \, 1)$.  
If $a = 0$ and $j \in J_p$, then $0 \le j \le r-p-1$ and so   
$0 \le \frac{j-a}{r-p} = \frac{j}{r-p} < 1$. 
In either case we have $0\le \frac{j - a}{r - p}<1$ for any $j \in J_{p}$. 
For any $j \in J_q$, Proposition \ref{prop:avoid} also shows that $1 \le j \le r-1$ 
and so $0< \frac{j - 1/2}{r} <1$. 
Recall from Proposition \ref{prop:p-p} that $I_p$ is a subset of $[p] \setminus \{ 0 \}$. 
Thus, for any $i \in I_p$ we have $1 \le i \le p-1$ and so $0 < \frac{i+a}{p} < 1$. 
Therefore $v_1, \dots, v_r \in \bQ \cap [0, \, 1)$ follows from formula \eqref{eqn:division}. 
Since all elements of the sets in \eqref{eqn:prec} are in $[0, \, 1)$, 
the relation \eqref{eqn:prec} is just an inclusion of multisets   
$$ 
\left\{\frac{i + a}{p}\right\}_{i\in \bar{I}_{p}} \subset 
\left\{\frac{j - a}{r - p}\right\}_{j\in J_{p}} \bigcup 
\left \{\frac{j - 1/2}{r}\right\}_{j\in J_{q}},  
$$
which is equivalent to division relation \eqref{eqn:div-a}. 
This establishes all assertions of the lemma. 
\hfill $\Box$ 
\begin{proposition} \label{prop:GPF-r}
For any non-elementary solution $\lambda =(p, 0 , r; a, 1/2 ;x) \in \cI$ 
there exists a division relation 
\begin{equation} \label{eqn:division2a}
\prod_{i=0}^{p-1} \big(w+ \ts\frac{i+a}{p} \big) \, \Big| \, \ds \prod_{i=1}^r (w+v_i),  
\tag{$\ref{eqn:division2}$} 
\end{equation} 
which allows us to arrange $v_1, \dots, v_r$ so that  
$\prod_{i=0}^{p-1}\big(w + \ts \frac{i+a}{p} \big) = \prod_{i=r-p+1}^r (w + v_i)$.    
Then we have 
\begin{alignat}{2}
h(w; \lambda) 
&= C_6 \cdot \delta^w \cdot   
\dfrac{\prod_{j=1}^{r-p-1} \vG\big( w + c + \frac{j}{r-p} \big)}{ \prod_{i=1}^{r-p} \vG(w + v_i)}, 
\qquad & c &:= \frac{1/2-a}{r-p},  
\label{eqn:gpf-h}  \\[1mm]
f(w; \check{\lambda}) 
&= \check{C} \cdot \dfrac{\prod_{i=0}^{r-p-1} 
\vG\big(w + \frac{i}{r-p}\big)}{ \prod_{i=1}^{r-p} \vG( w + \check{v}_i) }, 
\qquad & \check{v}_i &:= v_i -c.  \tag{$\ref{eqn:gpf-r}$} 
\label{eqn:gpf-r2}
\end{alignat}
where $\delta := p^p (r-p)^{r-p} r^{-r}$ while $C_6$ and $\check{C}$ 
are nonzero constants. 
\end{proposition}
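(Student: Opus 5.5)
The plan has three steps: derive the division relation from Lemma \ref{lem:dich}, cancel gamma factors in Lemma \ref{lem:chi} to get \eqref{eqn:gpf-h}, and transport that formula to $\check{\lambda}$ through \eqref{eqn:h(w;l)}. Nothing beyond identities already established should be needed.

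\emph{Division relation.} By \eqref{eqn:division} in Proposition \ref{prop:p-p},
\[
\prod_{i=1}^r (w+v_i)=\prod_{i\in I_p}\Big(w+\tfrac{i+a}{p}\Big)\cdot \Pi(w), \qquad \Pi(w):=\prod_{j\in J_p}\Big(w+\tfrac{j-a}{r-p}\Big)\prod_{j\in J_q}\Big(w+\tfrac{j-1/2}{r}\Big).
\]
Lemma \ref{lem:dich}(2) gives $\prod_{i\in\bar I_p}(w+\frac{i+a}{p}) \mid \Pi(w)$. Since $[p]=I_p\sqcup\bar I_p$, multiplying these two pieces shows that $\prod_{i=0}^{p-1}(w+\frac{i+a}{p})$ divides $\prod_{i=1}^r(w+v_i)$. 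This is \eqref{eqn:division2}, and we may relabel so that the last $p$ of the $v_i$ are exactly the numbers $\frac{i+a}{p}$, $i\in[p]$.

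\emph{Formula for $h$.} Substitute this arrangement into \eqref{eqn:h/G} of Lemma \ref{lem:chi}. The factors $\vG(w+\frac{i+a}{p})$ cancel against $\vG(w+v_i)$ for $i=r-p+1,\dots,r$, leaving
\[
h(w;\lambda)=C_5\,(p^p/r^r)^w\,\frac{\vG((r-p)w+1/2-a)}{\prod_{i=1}^{r-p}\vG(w+v_i)}.
\]
Write $(r-p)w+1/2-a=(r-p)(w+c)$ and expand by Gauss's multiplication formula \eqref{eqn:GMF} with $k=r-p$. This produces a constant, the exponential $(r-p)^{(r-p)w}$, and the product $\prod_{j}\vG(w+c+\frac{j}{r-p})$. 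The exponentials combine as $(p^p/r^r)^w(r-p)^{(r-p)w}=\delta^w$, which gives \eqref{eqn:gpf-h} with a nonzero constant $C_6$. Here I must check the index range of the product: the multiplication formula naturally yields $j=0,\dots,r-p-1$. The $j=0$ factor $\vG(w+c)$ is precisely the one that becomes $\vG(w)$ after the shift below, consistent with \eqref{eqn:gpf-r}. I would track this bookkeeping carefully, and expect the product in \eqref{eqn:gpf-h} to run over all $j\in[r-p]$.

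\emph{Transfer to $\check\lambda$.} In \eqref{eqn:h(w;l)} with $q=0$, $b=1/2$ we have $c=(1-a-b)/(r-p)=(1/2-a)/(r-p)$, the same $c$ as above. That identity then gives
\[
f(w+c;\check\lambda)=x^{rw-1}(1-x)^{-(r-p)w+a+1/2}\,h(w;\lambda).
\]
By Corollary \ref{cor:alg-eq}, $x^{-r}(1-x)^{r-p}=\delta$, so the exponential prefactor equals $\delta^{-w}$ times a constant and cancels the $\delta^w$ in \eqref{eqn:gpf-h}. Replacing $w$ by $w-c$ then turns $\vG(w+c+\frac{j}{r-p})$ into $\vG(w+\frac{j}{r-p})$ and $\vG(w+v_i)$ into $\vG(w+\check v_i)$ with $\check v_i=v_i-c$. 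The result is \eqref{eqn:gpf-r} with dilation constant $1$, so $\check\lambda$ is a solution to Problem I. The main potential obstacle is only this careful matching of constants and of the $j=0$ factor; the substantive input, the division relation, is already supplied by Lemma \ref{lem:dich}.
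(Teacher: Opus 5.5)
Your proposal is correct and follows the paper's proof step by step. The steps match: you multiply the division relation of Lemma \ref{lem:dich}(2) by $\prod_{i\in I_p}(w+\frac{i+a}{p})$, cancel the gamma factors in \eqref{eqn:h/G} and apply Gauss's multiplication formula (so that $C_6=C_5(2\pi)^{(p+1-r)/2}(r-p)^{-a}$), and then transfer to $\check\lambda$ via \eqref{eqn:h(w;l)} using $x^{-r}(1-x)^{r-p}=\delta$. Your bookkeeping remark is also right: the multiplication formula yields $\prod_{j=0}^{r-p-1}\vG(w+c+\frac{j}{r-p})$, so the printed lower limit $j=1$ in \eqref{eqn:gpf-h} is a typo, and only the full range $j\in[r-p]$ is consistent with \eqref{eqn:gpf-r}.
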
  
{\it Proof}.  
The division relation \eqref{eqn:division2} is obtained by 
multiplying both sides of relation \eqref{eqn:div-a} by  
$\prod_{i \in I_p} (w + \frac{i+a}{p})$ and using equation \eqref{eqn:division}. 
With the convention $\prod_{i=0}^{p-1}\big(w + \ts \frac{i+a}{p} \big) = \prod_{i=r-p+1}^r (w + v_i)$, 
formula \eqref{eqn:h/G} gives   
\begin{equation*} 
h(w;\lambda)=C_{5} \cdot (p^p / r^{r})^{w} \cdot 
\frac{\vG((r-p)w+1/2-a)}{\prod_{i=1}^{r - p} \vG(w + v_i)},  
\end{equation*}
which is converted to formula \eqref{eqn:gpf-h} by applying Gauss's multiplication 
formula \eqref{eqn:GMF} to $\vG((r-p) w + 1/2-a)$, where 
$C_6 := C_5 \cdot (2 \pi)^{(p+1-r)/2} (r-p)^{-a}$. 
From formula \eqref{eqn:h(w;l)} with $q = 0$ and $b = 1/2$, we have
$$
f (w; \check{\lambda}) = x^{r (w-c) - 1}
(1 - x)^{(p-r) (w-c)+a+1/2} \, h(w-c ; \lambda). 
$$
Formula \eqref{eqn:gpf-r2} is then obtaind by substituting formula \eqref{eqn:gpf-h} 
into this equation and using $x^r (1-x)^{p-r} \cdot \delta = 1$, which follows from 
$Y(x, \bp) = 0$ in Corollary \ref{cor:alg-eq}. \hfill $\Box$ \par\medskip
{\it Proof of Theorem $\ref{thm:gpf}$}. 
As for assertion (1) of Theorem \ref{thm:gpf}, 
GPF \eqref{eqn:gpf1} is proved in Proposition \ref{prop:p-p} 
and GPF \eqref{eqn:gpf2} is proved in Proposition \ref{prop:GPF-d}. 
GPF \eqref{eqn:gpf-r2} in assertion (2) is established in Proposition \ref{prop:GPF-r}. 
\hfill $\Box$ 
\subsection{Duality in More Depth} \label{ss:dual-r}
In Lemma \ref{lem:dich}, in parallel with conditions $(\rI)$ and $(\rII)$ 
for a piece of data $\lambda$, 
we can also speak of conditions $(\rI')$ and $(\rII')$ for its dual $\lambda'$.     
Thus, for a dual pair $(\lambda, \lambda')$, there are a total of four possibilities  
$(\rI, \rI')$, $(\rII, \rII')$, $(\rII, \rI')$, $(\rI, \rII')$, but they are essentially three, 
because $(\rI, \rII')$ is reduced to $(\rII, \rI')$ by exchanging the roles of  
$\lambda$ and $\lambda'$. 
Lemma \ref{lem:dich} becomes more useful, when applied 
to the dual pair $(\lambda, \lambda')$.    
\begin{lemma} \label{lem:cand} 
For any non-elementary solution $\lambda = (p, 0, r; a, 1/2; x) \in \cI$ and its dual   
$\lambda' = (p, 0, r; a', 1/2; x) \in \cI$ there exist nonnegative integers $j$ and $j'$ 
such that the triple $(r/p, a, a')$ takes one of the three forms,   
\begin{subequations} \label{eqn:candidate}
\begin{alignat}{4}
& (\rI, \, \rI'),  \qquad & \frac{r}{p} &= j + j' + 2, \qquad &  a &= \frac{j}{j + j' + 2}, \qquad &  a' &= \dfrac{j'}{ j + j' + 2},  
\label{eqn:candidate1} \\[1mm]
& (\rII, \rII'),  \qquad & \frac{r}{p} &= j + j' + 3, \qquad &  a &= \dfrac{2 j + 1}{2(j + j' + 3)}, \qquad & a' &= \dfrac{2 j' + 1}{2(j + j' + 3)}, 
\label{eqn:candidate2} \\[1mm]
& (\rII, \, \rI'),  \qquad & \frac{r}{p} &= j + j' + \frac{5}{2}, \qquad & a &= \dfrac{2 j + 1}{2(j + j') + 5}, \qquad &  a' &= \dfrac{2 j'}{2(j + j') + 5}, 
\label{eqn:candidate3}  
\end{alignat}
\end{subequations}  
where $\lambda$ and $\lambda'$ may be swapped in case \eqref{eqn:candidate3}.        
\end{lemma}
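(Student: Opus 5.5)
The plan is to apply Lemma~\ref{lem:dich}(1) to both $\lambda$ and its dual $\lambda'$. This is legitimate because $\lambda'$ is again a non-elementary solution in $\cI$ by Proposition~\ref{prop:GPF-d}, and it has the same principal part $(p,0,r)$. We then combine the two resulting expressions through the duality relation \eqref{eqn:duality} with $q=0$, namely
\[
a + a' = 1 - \frac{2p}{r}.
\]
Write $s := r/p$. For $\lambda$ we get a nonnegative integer $j$ with either (I) $a = j/s$ or (II) $a = (j+1/2)/s$. Likewise, for $\lambda'$ we get a nonnegative integer $j'$ with either (I$'$) $a' = j'/s$ or (II$'$) $a' = (j'+1/2)/s$.

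Multiplying the duality relation by $s$ gives $s a + s a' = s - 2$, and the three cases then separate as follows.
\begin{itemize}
\item In case $(\rI,\rI')$ we get $j + j' = s - 2$, so $s = j + j' + 2$. Then $a = j/(j+j'+2)$ and $a' = j'/(j+j'+2)$, which is \eqref{eqn:candidate1}.
\item In case $(\rII,\rII')$ we get $j + j' + 1 = s - 2$, so $s = j + j' + 3$. Then $a = (2j+1)/(2(j+j'+3))$ and similarly for $a'$, which is \eqref{eqn:candidate2}.
\item In case $(\rII,\rI')$ we get $j + 1/2 + j' = s - 2$, so $s = j + j' + 5/2$. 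Then $a = (j+1/2)/s = (2j+1)/(2(j+j')+5)$ and $a' = 2j'/(2(j+j')+5)$, which is \eqref{eqn:candidate3}.
\item The remaining case $(\rI,\rII')$ is case $(\rII,\rI')$ with the roles of $\lambda$ and $\lambda'$ exchanged. This is allowed because duality is an involution on $\cI$, so $\lambda'' = \lambda$ and the hypotheses are symmetric. This accounts for the swap clause in the statement.
\end{itemize}

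No step should be a real obstacle, since all the analytic content is already contained in Lemma~\ref{lem:dich} and Proposition~\ref{prop:GPF-d}. The only points needing care are three. First, check that Lemma~\ref{lem:dich} genuinely applies to $\lambda'$: it must be a non-elementary solution with $b' = 1/2$, which was noted after \eqref{eqn:recip}. Second, note that $a' = 1 - 2p/r - a$ is exactly \eqref{eqn:duality} with $q = 0$. Third, handle the bookkeeping of the half-integer shifts correctly in the mixed case. The result is purely a case analysis yielding the listed forms of $(r/p, a, a')$; nothing about integrality of $s$ beyond these expressions is claimed.
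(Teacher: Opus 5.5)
Your proposal is correct and follows the paper's own proof essentially step for step. You apply Lemma~\ref{lem:dich}(1) to both $\lambda$ and its dual $\lambda'$, which is legitimate by Proposition~\ref{prop:GPF-d}, and substitute into $a+a'=1-2p/r$ to get the three cases; the case $(\rI,\rII')$ is reduced to $(\rII,\rI')$ by exchanging the roles of $\lambda$ and $\lambda'$, using that duality is an involution.
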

{\it Proof}.   
The main part of the definition of duality \eqref{eqn:duality} is the equation  
\begin{equation} \label{eqn:a-ap}
a + a' = 1 - 2 p/r.
\end{equation}
In case $(\rI, \rI')$, Lemma \ref{lem:dich} implies that there exist 
nonnegative integers $j$ and $j'$ such that $a = p j/r$ and $a' = p j'/r$.  
Substituting these into equation \eqref{eqn:a-ap} yields $r/p = j + j' + 2$,  
$a = j/(j+j'+2)$ and $a' = j'/(j+j'+2)$.  
This is case \eqref{eqn:candidate1}. 
In case $(\rII, \rII')$, Lemma \ref{lem:dich} implies that there exist 
nonnegative integers  $j$ and $j'$ such that $a = p (j + 1/2)/r$ and 
$a' = p(j' + 1/2)/r$. 
Substituting these into equation \eqref{eqn:a-ap} yields $r/p = j + j' +3$, 
$a = (2 j+1)/(2 j+2 j'+6)$ and $a' = (2 j'+1)/(2 j+2 j'+6)$.  
This is case \eqref{eqn:candidate2}. 
In case $(\rII, \rI')$, Lemma \ref{lem:dich} implies that there exist 
nonnegative integers $j$ and $j'$ such that and $a = p(j + 1/2)/r$ and $a' = p j'/r$ .  
Substitutitng these into equation \eqref{eqn:a-ap} yields $r/p = j + j' +5/2$, 
$a = (2 j + 1)/(2 j + 2 j' + 5)$ and $a' = 2 j'/(2 j + 2  j'+ 5)$. 
This is case \eqref{eqn:candidate3}. \hfill $\Box$ \par\medskip
The next step is to show that neither \eqref{eqn:candidate2} nor  
\eqref{eqn:candidate3} can occur.    
To this end, we make a calculation similar to that in \S \ref{ss:thgs}. 
Euler's transformation in \eqref{eqn:euler} allows us to rewrite 
the definitions \eqref{eqn:Phi} and \eqref{eqn:P} as   
\begin{subequations} \label{eqn:PhiP-e}
\begin{align} 
\Phi (w ; \lambda)
&= (r w)_{r-1} \cdot \langle (1-z)^{r-p} \cdot \hgF( \bal(w); z) \cdot 
\hgF(\bv - \bal(w+1) ; z) \rangle_{r -1} \big|_{z = x}, 
\label{eqn:Phi-e} \\ 
P(w; \lambda) 
&= (r w)_{r} \cdot 
\langle (1-z)^{r-p-1} \cdot \hgF(\bal(w) ; z) \cdot 
\hgF(\be_3 - \bal(w+1); z) \rangle_{r - 1} \big|_{z=x},  
\label{eqn:P-e} 
\end{align}
\end{subequations}    
respectively, where $\bal(w) := ( p w + a, 1/2; r w)$, $\bv := (1, 1, 2)$ and $\be_3 := (0, 0, 1)$. 
\begin{lemma} \label{lem:thgs-p}
Given a piece of integral data $\lambda = (p, 0, r; a; 1/2; x) \in \cI$, 
we put $\zeta_i := -(a + i)/p$ for $i \in [p]$. 
If $r \zeta_i$ is not an integer, then we have  
\begin{subequations} \label{eqn:thgs-p}
\begin{align} 
\Phi(\zeta_i; \lambda) 
&= (-1)^{p-1-i} (1-x)^{r-p} \cdot (r \zeta_i + i)_{r-p} \cdot 
\cF_i(1/2; r \zeta_i; x) \cdot \cF_{p-1-i}(1/2; 2-r(\zeta_i+1); x), 
\label{eqn:thgs-p1} \\
P(\zeta_i; \lambda) 
&= (-1)^{p-i} (1-x)^{r-p-1} \cdot (r \zeta_i + i)_{r-p} \cdot \cF_i(1/2; r \zeta_i; x) 
\cdot \cF_{p-i}(-1/2; 1-r(\zeta_i+1); x).   
\label{eqn:thgs-p2} 
\end{align}
\end{subequations}
\end{lemma}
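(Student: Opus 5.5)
The plan is to imitate the proofs of Lemmas \ref{lem:sol-p} and \ref{lem:sol-q}, but to start from the Euler-transformed expressions \eqref{eqn:PhiP-e} rather than from \eqref{eqn:PhiP}. At $w = \zeta_i$ both hypergeometric factors in \eqref{eqn:PhiP-e} should become polynomials. Then the truncation $\langle \cdot \rangle_{r-1}$ does nothing, and we can simply evaluate at $z = x$.

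First I compute the parameter vectors at $w=\zeta_i$, using $p\zeta_i + a = -i$:
\[
\bal(\zeta_i) = (-i,\, 1/2,\, r\zeta_i), \qquad
\bv - \bal(\zeta_i+1) = \bigl(-(p-1-i),\, 1/2,\, 2 - r(\zeta_i+1)\bigr),
\]
\[
\be_3 - \bal(\zeta_i+1) = \bigl(-(p-i),\, -1/2,\, 1 - r(\zeta_i+1)\bigr).
\]
In \eqref{eqn:Phi-e} the product $(1-z)^{r-p}\,\hgF(\bal(\zeta_i);z)\,\hgF(\bv-\bal(\zeta_i+1);z)$ is therefore a polynomial of degree at most $(r-p) + i + (p-1-i) = r-1$. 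In \eqref{eqn:P-e} the analogous product has degree at most $(r-p-1) + i + (p-i) = r-1$. So in both cases truncation at degree $r-1$ is exact, and we may put $z=x$ directly, which produces the factors $(1-x)^{r-p}$ and $(1-x)^{r-p-1}$.

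Next I split the Pochhammer prefactors so as to absorb the denominators $(\gamma)_k$ via $\cF_k(\beta;\gamma;z) = (\gamma)_k\,\hgF(-k,\beta;\gamma;z)$. This identity is legitimate because $r\zeta_i \notin \bZ$, hence also $2-r(\zeta_i+1)\notin\bZ$ and $1-r(\zeta_i+1)\notin\bZ$. Concretely:
\[
(r\zeta_i)_{r-1} = (r\zeta_i)_i\,(r\zeta_i+i)_{r-p}\,(r\zeta_i+r-p+i)_{p-1-i},
\]
\[
(r\zeta_i)_{r} = (r\zeta_i)_i\,(r\zeta_i+i)_{r-p}\,(r\zeta_i+r-p+i)_{p-i}.
\]
Reversing the order of the factors in the last Pochhammer symbol gives, for instance, $(r\zeta_i+r-p+i)_{p-1-i} = (-1)^{p-1-i}\,(2-r(\zeta_i+1))_{p-1-i}$, since its largest factor is $r\zeta_i + r - 2 = -(2-r\zeta_i-r)$. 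In the same way $(r\zeta_i+r-p+i)_{p-i} = (-1)^{p-i}\,(1-r(\zeta_i+1))_{p-i}$.

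Combining these pieces, $(r\zeta_i)_i\,\hgF(-i,1/2;r\zeta_i;x) = \cF_i(1/2;r\zeta_i;x)$, and the second hypergeometric factor together with its reversed Pochhammer symbol becomes $\cF_{p-1-i}(1/2;2-r(\zeta_i+1);x)$, respectively $\cF_{p-i}(-1/2;1-r(\zeta_i+1);x)$. The leftover middle factor $(r\zeta_i+i)_{r-p}$ and the signs $(-1)^{p-1-i}$, $(-1)^{p-i}$ give exactly \eqref{eqn:thgs-p1} and \eqref{eqn:thgs-p2}. The only step requiring care is this bookkeeping of signs and of the Pochhammer splitting, in particular checking the endpoint of the reversed product. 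Everything else is direct substitution, as in \S\ref{ss:thgs}.
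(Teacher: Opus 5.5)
Your proposal is correct and is essentially the paper's own proof. You substitute $w=\zeta_i$ into the Euler-transformed forms \eqref{eqn:PhiP-e}, read off the parameter vectors, split $(r\zeta_i)_{r-1}$ and $(r\zeta_i)_r$ into three Pochhammer blocks with the last one reversed to produce the signs, and recognize the terminating sums as $\cF$-polynomials (valid because $r\zeta_i\notin\bZ$); your explicit degree count showing the truncation $\langle\cdot\rangle_{r-1}$ is exact is a detail the paper leaves implicit, and it is right.
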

{\it Proof}. 
Substituting $w = \zeta_i$ in equations \eqref{eqn:PhiP-e} and observing that 
\begin{align*}
\bal(\zeta_i) &= (-i, \, 1/2, \, r \zeta_i), \\ 
\bv-\bal(\zeta_i +1) &= (-(p-1-i), \, 1/2, \, 2-r(\zeta_i+1)), \\
\be_3-\bal(\zeta_i +1) &= (-(p-i), \, -1/2, \, 1 -r(\zeta_i+1)), \\
(r \zeta_i)_{r-1} &= (-1)^{p-1-i} \cdot  (r \zeta_i + i)_{r-p} \cdot (r \zeta_i)_i \cdot (2-r( \zeta_i+1))_{p-1-i}, \\
(r \zeta_i)_r &= (-1)^{p-i}  \cdot (r \zeta_i + i)_{r-p} \cdot (r \zeta_i)_i \cdot (1-r( \zeta_i+1))_{p-i},  
\end{align*}
with $r \zeta_i \not \in \bZ$, $2-r(\zeta_i+1) \not \in \bZ$ and 
$1-r(\zeta_i+1) \not \in \bZ$, 
we obtain formulas \eqref{eqn:thgs-p} from definition \eqref{eqn:cF}. 
\hfill $\Box$
\begin{lemma} \label{lem:cand2}
In Lemma $\ref{lem:cand}$ neither \eqref{eqn:candidate2} nor 
\eqref{eqn:candidate3} can occur.  
In \eqref{eqn:candidate1} we have $p(j + j' + 1) \equiv 0 \bmod 2$. 
\end{lemma}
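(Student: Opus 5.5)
The plan is to show that condition $(\rII)$ of Lemma~\ref{lem:dich} can never hold for a non-elementary solution $\lambda$. Both $(\rII,\rII')$ and $(\rII,\rI')$ require $(\rII)$ for $\lambda$ itself; for $(\rII,\rI')$ this uses the convention in Lemma~\ref{lem:cand} that $\lambda$ is the member of the pair satisfying $(\rII)$, which is allowed since $\lambda'$ is also a non-elementary solution by Proposition~\ref{prop:GPF-d}. So excluding $(\rII)$ rules out both \eqref{eqn:candidate2} and \eqref{eqn:candidate3}. The tool is Lemma~\ref{lem:thgs-p} at the point $\zeta_0 = -a/p$, combined with assertions (1) and (3) of Lemma~\ref{lem:thgs}.

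Suppose $a = p(j+1/2)/r$ with $j \in \bZ_{\ge 0}$. Then $r\zeta_0 = -(j+1/2)$ is a half-integer, so the hypothesis of Lemma~\ref{lem:thgs-p} with $i=0$ holds. The numbers $2-r(\zeta_0+1)$ and $1-r(\zeta_0+1)$ are also half-integers, since $r \in \bZ$. Consequently the Pochhammer factor $(r\zeta_0)_{r-p}$ is nonzero, and so is $(1-x)^{r-p}$. Since $\cF_0 \equiv 1$, formulas \eqref{eqn:thgs-p} reduce to
\[
\Phi(\zeta_0;\lambda) = (-1)^{p-1}(1-x)^{r-p}(r\zeta_0)_{r-p}\,\cF_{p-1}\big(1/2;\,2-r(\zeta_0+1);\,x\big),
\]
\[
P(\zeta_0;\lambda) = (-1)^{p}(1-x)^{r-p-1}(r\zeta_0)_{r-p}\,\cF_{p}\big(-1/2;\,1-r(\zeta_0+1);\,x\big).
\]
By Proposition~\ref{prop:Phi-P}, $\Phi(w;\lambda)\equiv 0$, so the first polynomial vanishes at $x$. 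Also $P(w;\lambda) = c\prod_{i=1}^r(w+v_i)$, and by the division relation \eqref{eqn:division2a} of Proposition~\ref{prop:GPF-r} the factor $w + a/p$ (the $i=0$ factor) divides $\prod(w+v_i)$. Hence $P(\zeta_0;\lambda)=0$, so the second polynomial also vanishes at $x$.

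Set $k=p$, $\beta=-1/2$ and $\gamma = 1-r(\zeta_0+1)$. Then $\beta+1 = 1/2$ and $\gamma+1 = 2-r(\zeta_0+1)$, so we have exactly $\cF_k(\beta;\gamma;x) = \cF_{k-1}(\beta+1;\gamma+1;x) = 0$. Assertion (3) of Lemma~\ref{lem:thgs} then gives $\cF_p(-1/2;\gamma;z)\equiv 0$ in $\bC[z]$. This contradicts assertion (1), since $\beta=-1/2\notin\bZ$. Hence $(\rII)$ is impossible, and only \eqref{eqn:candidate1} survives.

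For the parity claim, in case \eqref{eqn:candidate1} we have $r = p(j+j'+2)$, so $r - p = p(j+j'+1)$. By Theorem~\ref{thm:recall}(2), $p\equiv r \bmod 2$, which gives $p(j+j'+1)\equiv 0 \bmod 2$.

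The only delicate point is verifying the non-integrality hypotheses. These are needed both to apply Lemma~\ref{lem:thgs-p} and to ensure that the prefactor $(r\zeta_0)_{r-p}$ does not vanish, which would otherwise kill the information. In case $(\rII)$, all the relevant quantities are half-integers because $r\in\bZ$, so this step should go through.
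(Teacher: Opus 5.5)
Your proposal is correct and follows essentially the same route as the paper. You evaluate $\Phi$ and $P$ at $\zeta_0=-a/p$ via Lemma~\ref{lem:thgs-p}, use $\Phi\equiv 0$ and the division relation \eqref{eqn:division2a} to make both terminating sums vanish at $x$, and then play assertion (3) of Lemma~\ref{lem:thgs} against assertion (1), deriving the parity claim from $r\equiv p \bmod 2$ as the paper does.
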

{\it Proof}. 
Since $\lambda$ is a non-elementary solution in $\cI$, 
we have $\Phi(w; \lambda) \equiv 0$ by Proposition \ref{prop:Phi-P}. 
Division relation \eqref{eqn:division2a} in Proposition \ref{prop:GPF-r} tells us 
that $P(\zeta_i; \lambda) = 0$ for every $i \in [p]$. 
In cases \eqref{eqn:candidate2} and \eqref{eqn:candidate3} we have 
$r \zeta_0 = -(r/p) \, a = -j - 1/2$, which is not an integer. 
So formulas \eqref{eqn:thgs-p} with $i = 0$ are available, yielding   
\begin{align*} 
\Phi(\zeta_0; \lambda) 
&= (-1)^{p-1} (1-x)^{r-p} \cdot (r \zeta_0)_{r-p} \cdot \cF_{p-1}(1/2; 2-r(\zeta_0 +1); x) = 0, \\
P(\zeta_0; \lambda) 
&= (-1)^{p} (1-x)^{r-p-1} \cdot (r \zeta_0)_{r-p}  \cdot \cF_{p}(-1/2; 1-r(\zeta_0 +1); x) = 0.    
\end{align*}
Since $(r \zeta_0)_{r-p}$ is nonzero, we have $\cF_{p-1}(1/2; 2-r(\zeta_0 +1); x) 
= \cF_{p}(-1/2; 1-r(\zeta_0 +1); x) = 0$.  
Assertion (3) of Lemma \ref{lem:thgs} then forces $\cF_{p}(-1/2; 1 -r(\zeta_0 + 1); z) \equiv 0$. 
However, by assertion (1) of the same lemma we have $\cF_{p}(-1/2; 1 -r(\zeta_0 + 1); z) \not \equiv 0$, 
as $-1/2$ and $1-r(\zeta_0 + 1)$ are not integers. 
This contradiction shows that neither \eqref{eqn:candidate2} nor \eqref{eqn:candidate3} is feasible.   
In case \eqref{eqn:candidate1} we have $r - p = p (j + j' + 2) - p = p(j + j' +1) \equiv 0 \bmod 2$ 
by the congruence $r \equiv p \bmod 2$ in formula \eqref{eqn:integral}. 
\hfill $\Box$ \par\medskip
{\it Proof of Theorem $\ref{thm:a-x}$}. 
Assertion (1) is already proved in Proposition \ref{prop:GPF-d}. 
Assertion (2) follows from Lemmas \ref{lem:cand} and \ref{lem:cand2}. 
Now that $s := r/p \in \bZ_{\ge 2}$, the equation $Y(x; \bp = 0)$ in Corollary \ref{cor:alg-eq} 
is equivalent to $\{(s-1)^{s-1} x^s \}^p = \{ s^s (1-x)^{s-1} \}^p$. 
We can take the $p$-th root of it to obtain algebraic equation 
$(s-1)^{s-1} x^s = s^s (1-x)^{s-1}$ in \eqref{eqn:alg-eq}, since  
$(s-1)^{s-1} x^s$ and $s^s (1-x)^{s-1}$ are positive.  
This proves assertion (3). \hfill $\Box$ 
\subsection{Necessary and Sufficient Condition} \label{ss:nsc} 
With Theorem \ref{thm:a-x} in hand, we can now add one more ingredient to 
obtain a necessary and sufficient condition for a given piece of data 
$\lambda \in \cI$ to be a non-elementary solution. 
To state the result, we set  
$$
\chi(j, k; s) := - \frac{1}{s} \left\{ j + ( s-1) \left( k-\frac{1}{2} \right) \right\}, 
\quad \mbox{for} \quad  j, \, k \in \bZ_{\ge 0}, \,\, s \in \bZ_{\ge 2}.  
$$ 
\begin{theorem} \label{thm:nsc} 
Let $\lambda = (p, 0, r; a, 1/2; x) \in \cI$ be a piece of integral data and 
$\lambda' = (p, 0, r; a', 1/2; x) \in \cI$ its dual. 
Then $\lambda$ and $\lambda'$ form a dual pair of non-elementary solutions 
if and only if  
\begin{enumerate}
\setlength{\itemsep}{-1pt}
\item the ratio $s := r/p$ is an integer such that $s \ge 2$ and $p(s-1) \equiv 0 \bmod 2$,  
\item the argument $x$ satisfies the algebraic equation
$(s-1)^{s-1} x^s - s^s (1-x)^{s-1} = 0$, 
\item there exist nonnegative integers $j$ and $j'$ with $ j + j' = s-2$ such that
$a = j/s$,  $a' = j'/s$ and  
\begin{equation} \label{eqn:nsc} 
\cF_k \left( \chi(j, k; s); 1/2 - k; x \right)  \cdot 
\cF_{k'} \left( \chi(j', k'; s); 1/2 - k'; x \right) = 0,   
\end{equation}
for all nonnegative integers $k$ and $k'$ with $k + k' = r-1$, 
where $\cF_k(\beta; \gamma; z)$ is defined in formula \eqref{eqn:cF}.   
\end{enumerate} 
\end{theorem}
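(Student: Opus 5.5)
The plan is to reduce everything to Proposition~\ref{prop:Phi-P}: $\lambda$ is a solution iff the polynomial $\Phi(w;\lambda)$ vanishes identically. Two facts make this effective.
\begin{enumerate}
\item By Lemma~\ref{lem:Phi-P} and Corollary~\ref{cor:A(w;l)}, $\Phi(w;\lambda)$ is a nonzero constant multiple of $\phi_{12}^{(r-1)}(w;\lambda)$, so it has degree at most $r-1$ in $w$.
\item Lemma~\ref{lem:sol-q} evaluates $\Phi$ explicitly at the $r$ distinct points $\eta_k=-(k-1/2)/r$, $k\in[r]$, with no integrality side conditions.
\end{enumerate}
Hence $\Phi\equiv 0$ if and only if $\Phi(\eta_k;\lambda)=0$ for all $k\in[r]$. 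The whole task is to show that, under $a=j/s$ and $a'=j'/s$, these $r$ values are (up to sign) exactly the products in \eqref{eqn:nsc}.

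\emph{Key computation.} In Lemma~\ref{lem:sol-q} write $k'=r-1-k$, and note $3/2+k-r=1/2-k'$. With $s=r/p$ and $a=j/s$ we get
\[
c_k=(r-p)\eta_k-a=-(1-\tfrac1s)(k-\tfrac12)-\tfrac js=\chi(j,k;s).
\]
For the other parameter,
\[
d_k=1-(r-p)(\eta_k+1)+a=1+a-(1-\tfrac1s)(k'+\tfrac32).
\]
Duality \eqref{eqn:a-ap} gives $a=1-2/s-a'=(s-2-j')/s$, and substituting this yields
\[
d_k=-\tfrac{j'}{s}+(1-\tfrac1s)\bigl(2-k'-\tfrac32\bigr)=\chi(j',k';s).
\]
Therefore
\[
\Phi(\eta_k;\lambda)=\pm\,\cF_k(\chi(j,k;s);1/2-k;x)\,\cF_{k'}(\chi(j',k';s);1/2-k';x).
\]
I expect this identification to be the main obstacle. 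It is pure bookkeeping with $\eta_k$, $r-p=r(1-1/s)$ and the duality relation, but everything hinges on it.

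\emph{Necessity.} Suppose $\lambda,\lambda'$ is a dual pair of non-elementary solutions. Theorem~\ref{thm:a-x} gives (1), (2), and $a=j/s$, $a'=j'/s$ with $j+j'=s-2$. Proposition~\ref{prop:Phi-P} gives $\Phi\equiv0$, so in particular $\Phi(\eta_k;\lambda)=0$ for every $k\in[r]$. By the computation above, this is \eqref{eqn:nsc}.

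\emph{Sufficiency.} Assume (1)--(3). The data are integral, and $a'$ is indeed the dual parameter, since $j+j'=s-2$ gives $a+a'=1-2/s=1-2p/r$. Then \eqref{eqn:nsc} for all $k+k'=r-1$ says $\Phi(\eta_k;\lambda)=0$ at the $r$ distinct points $\eta_0,\dots,\eta_{r-1}$. Since $\deg\Phi\le r-1$, this forces $\Phi\equiv0$, and Proposition~\ref{prop:Phi-P} shows that $\lambda$ is a solution. It is non-elementary by Theorem~\ref{thm:recall}(1), because $b=1/2$ is not a nonpositive integer. Proposition~\ref{prop:GPF-d} then shows that $\lambda'$ is also a non-elementary solution. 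Conditions (1) and (2) are not needed for this direction, but they are consistent with it: they follow a posteriori from Theorem~\ref{thm:a-x}.
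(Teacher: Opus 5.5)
Your proposal is correct and follows the paper's proof essentially verbatim. Both arguments reduce to $\Phi(w;\lambda)\equiv 0$ via Proposition~\ref{prop:Phi-P}, use $\deg\Phi\le r-1$ together with the evaluations of Lemma~\ref{lem:sol-q} at the $r$ distinct points $\eta_k$, and identify $c_k=\chi(j,k;s)$ and $d_k=\chi(j',k';s)$. The only cosmetic difference is in the second identity: the paper notes $\eta_k+1=2/r-\eta_{k'}$, so $d_k=(r-p)\eta_{k'}-a'$ is the dual's $c$-parameter at index $k'$, whereas you substitute $a=1-2/s-a'$ directly.
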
 
{\it Proof}. 
All conditions other than equation \eqref{eqn:nsc} come from Theorem \ref{thm:a-x}, 
so the main part of the proof lies in proving condition \eqref{eqn:nsc}.  
We recall from Lemma \ref{lem:sol-q} that $\eta_k := -(k-1/2)/r$, 
$c_k := (r-p) \eta_k - a$ and $d_k := 1 - (r-p) (\eta_k + 1) + a$. 
Put $c_k' := (r-p) \eta_k - a'$ and $k' := r-1-k$. 
Observe that $\eta_k + 1 = 2/r - \eta_{k'}$ and
$$
d_k = 1 -(r-p) (2/r - \eta_{k'}) + a = (r-p) \eta_{k'} -(1 - 2p/r - a) 
= (r-p) \eta_{k'} - a' = c'_{k'}. 
$$   
From formula \eqref{eqn:saap} in Theorem \ref{thm:a-x} we have $a = j/s$ and 
$a' = j'/s$ with $j + j' = s-2$, and hence  
$$
c_k = - (r-p) (k-1/2)/r - j/s = - (s-1) (k-1/2)/s -j/s = \chi(j, k; s). 
$$
Similarly we have $d_k = c'_{k'} = \chi(j', k' ; s)$.  
Therefore formula \eqref{eqn:sol-q1} in Lemma \ref{lem:sol-q} becomes 
\begin{equation} \label{eqn:sol-q1a}
\Phi(\eta_k; \lambda) = (-1)^{k'} \cdot 
\cF_k \left( \chi(j, k; s); 1/2 - k; x \right) \cdot 
\cF_{k'} \left( \chi(j', k'; s); 1/2 - k'; x \right) \quad \mbox{for any} \quad k \in [r].   
\end{equation}
By Corollary \ref{cor:A(w;l)} and Lemma \ref{lem:Phi-P}, 
$\Phi(w; \lambda)$ is a polynomial of degree at most $r-1$. 
Hence $\Phi(w; \lambda)$ vanishes identically if and only if 
$\Phi(\eta_k; \lambda) = 0$ for every $k \in [r]$, since 
the $r$ numbers $\eta_k$, $k \in [r]$, are mutually distinct.  
On the other hand, by Proposition \ref{prop:Phi-P}, $\lambda$ 
is a non-elementary solution if and only if $\Phi(w; \lambda)$ vanishes identically. 
Accordingly, the necessary and sufficient condition \eqref{eqn:nsc} 
follows from formula \eqref{eqn:sol-q1a}.  
\hfill $\Box$ 
\section{Degree of the Argument} \label{sec:deg}
The aim of this section is to establish Theorems \ref{thm:degree} and \ref{thm:rqa}. 
It is convenient to transform algebraic equation \eqref{eqn:alg-eq} into 
a monic equation with integer coefficients, whose roots are algebraic integers.          
\begin{lemma} \label{lem:algequ} 
For any non-elementary solution $\lambda = (p, 0, r; a, 1/2; x) \in \cI$ the argument 
$x$ can be represented as $x = s/(y + s)$, where $s := r/p \in \bZ_{\ge 2}$ and $y$ 
is the unique positive root, which is simple, of the algebraic equation 
\begin{equation} \label{eqn:algeq}
\psi_s(z) := z^{s-1} (z + s) - (s-1)^{s-1} = 0.  
\end{equation}
\end{lemma}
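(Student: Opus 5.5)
The plan is to substitute $x = s/(y+s)$ directly into the algebraic equation \eqref{eqn:alg-eq} of Theorem \ref{thm:a-x}(3) and check that it turns into \eqref{eqn:algeq}. By Theorem \ref{thm:a-x}, $s = r/p$ is an integer with $s \ge 2$, and $x \in (0,1)$ is the unique root of $\varphi_s$ in that interval.

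First I will set up the change of variables. The map $y \mapsto s/(y+s)$ is a decreasing bijection of $(0,\infty)$ onto $(0,1)$, with inverse $y = s(1-x)/x$. So there is a unique $y>0$ with $x = s/(y+s)$, and then $1-x = y/(y+s)$. Substituting these into $\varphi_s(x)=0$ gives
\[
(s-1)^{s-1}\frac{s^s}{(y+s)^s} = s^s \frac{y^{s-1}}{(y+s)^{s-1}}.
\]
Multiplying through by $(y+s)^s/s^s$, which is positive, turns this into $(s-1)^{s-1} = y^{s-1}(y+s)$, that is, $\psi_s(y) = 0$. Because the bijection reverses order and the steps are reversible, positive roots of $\psi_s$ correspond exactly to roots of $\varphi_s$ in $(0,1)$. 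Hence $y$ is a positive root of $\psi_s$, and it is the only one.

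It remains to show that this root is simple, which I can do directly on $(0,\infty)$. There $\psi_s'(z) = (s-1)z^{s-2}(z+s) + z^{s-1} > 0$ when $s \ge 2$. So $\psi_s$ is strictly increasing on $(0,\infty)$, with $\psi_s(0) = -(s-1)^{s-1} < 0$ and $\psi_s(z)\to\infty$. This confirms that the positive root is unique and that $\psi_s'(y) \neq 0$, so the root is simple.

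There is no real obstacle here; the only step needing care is the clearing of denominators, where the positivity of $y+s$ and $s$ ensures no roots are gained or lost. The point of the reformulation is that $\psi_s$ is monic with integer coefficients, so $y$ is an algebraic integer. This prepares the degree estimates for Theorem \ref{thm:degree}, and since $x$ is a rational function of $y$ over $\bQ$ and vice versa, $\deg x = \deg y$.
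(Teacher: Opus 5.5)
Your proof is correct and follows the paper's own argument. The substitution $x = s/(y+s)$ turns $\varphi_s(x)=0$ into $\psi_s(y)=0$; then $\psi_s(0)<0$, together with positivity of $\psi_s'(z) = s z^{s-2}(z+s-1)$ on $(0,\infty)$, gives a unique positive root, and it is simple. Your explicit check that $y \mapsto s/(y+s)$ is a bijection of $(0,\infty)$ onto $(0,1)$ simply spells out the step the paper calls straightforward.
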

{\it Proof}. 
It is straightforward to see that change of variable $z = s/(w + s)$ transforms 
equation \eqref{eqn:alg-eq} into a monic equation $\psi_s(w) = 0$.  
Since $\psi_s(0) = -(s-1)^{s-1} < 0$ and $\psi_s'(z) = s \, z^{s-2} (z + s-1) > 0$ 
for $z > 0$, equation \eqref{eqn:algeq} has a unique positive root 
$y$ with derivative $\psi_s'(y) > 0$, so $y$ is a simple root.     
\hfill $\Box$ \par\medskip
When $s$ is odd, equation \eqref{eqn:algeq} has a double root at $z = 1-s$, 
which we think is {\sl trivial}; any other root is said to be {\sl nontrivial}.   
When $s$ is even, any root is {\sl nontrivial}. 
The main result of this section is the following.  
\begin{theorem} \label{thm:deg} 
For each integer $s \ge 3$ any montrivial root $\alpha$ of equation \eqref{eqn:algeq} 
has degree 
\begin{equation} \label{eqn:deg}
\deg \alpha > \delta(s) := \frac{(\log p_s)(s-1)}{1 + \log(s-1)}, 
\end{equation}
where $p_s$ is the least prime factor of the integer $s-1$.   
\end{theorem}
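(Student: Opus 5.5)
The bound has the shape $\deg\alpha\,(1+\log(s-1)) > (s-1)\log p_s$, that is, $p_s^{\,s-1} < (e(s-1))^{\deg\alpha}$. So I plan a standard ``norm versus size'' argument. Find a nonzero algebraic integer $\theta\in\bQ(\alpha)$ that is highly divisible by a prime $\ell\mid s-1$, and whose conjugates are all at most $e(s-1)$ in absolute value. Comparing the $\ell$-part of $N(\theta)$ with the archimedean bound $|N(\theta)|\le (e(s-1))^{d}$, where $d=\deg\alpha$, then gives the inequality. Since $p_s$ is the least prime factor of $s-1$, the argument will be run with $\ell=p_s$; any $\ell\mid s-1$ would do, and $p_s$ is what appears in the statement.

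\emph{Normalization.} First I move to a cleaner polynomial. Substitute $z=(s-1)u$ and then $\beta=1/u$ in \eqref{eqn:algeq}. This turns $\psi_s(z)=0$ into the monic integral equation
\[
\beta^s-s\beta-(s-1)=0, \qquad\mbox{equivalently}\qquad \beta(\beta^{s-1}-1)=(s-1)(\beta+1).
\]
Under this change the trivial double root $z=1-s$ (for $s$ odd) corresponds to $\beta=-1$, and $\bQ(\alpha)=\bQ(\beta)$. So it suffices to treat a nontrivial root $\beta\neq-1$ of degree $d$.

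\emph{Local information at $\ell$.} Next I fix $\ell=p_s$ and record the $\ell$-adic data. Since $s\equiv 1 \bmod \ell$, the Newton polygon of $\beta^s-s\beta-(s-1)$ at $\ell$ shows the following: exactly one root is an $\ell$-adic non-unit, with valuation $v_\ell(s-1)$, and it lies in $\bQ_\ell$. All other roots are $\ell$-adic units. For a unit conjugate $\beta_i$ the identity above gives $v(\beta_i^{s-1}-1)\ge v_\ell(s-1)+v(\beta_i+1)$. The plan is to build $\theta$ from these congruences, taking $\theta=\beta^{s-1}-1$ or a suitable factor $\prod_\zeta(\beta-\zeta)$ over $(s-1)$-th roots of unity. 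The goal is that the $\ell$-adic valuation of $N(\theta)$, summed over the conjugates, is at least $s-1$, so that $\ell^{s-1}\mid N(\theta)$. Nontriviality is used exactly here, to guarantee $\theta\neq0$: $\beta^{s-1}=1$ combined with $\beta^s=s\beta+s-1$ forces $\beta=-1$.

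\emph{Archimedean bound and conclusion.} I then bound the conjugates. From $|\beta|^s=|s\beta+s-1|$ one gets $|\beta_i|\le$ roughly $s^{1/(s-1)}$, which is at most $1+\log(s-1)/(s-1)+\cdots$. Hence each conjugate of $\theta$ has absolute value bounded by a quantity like $e(s-1)$ after normalization; the factor $e$ should come from $(1+\frac{c}{s-1})^{s-1}\le e^{c}$. Combining the two bounds gives $p_s^{\,s-1}\le |N(\theta)|\le (e(s-1))^{d}$. Equality should be excluded by irrationality or strictness of one of the estimates, and this yields \eqref{eqn:deg}.

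\emph{Main obstacle.} The hard step is the choice of $\theta$: it must carry $\ell$-divisibility of total exponent about $s-1$ while keeping every conjugate below $e(s-1)$. The naive choices $\beta$, $\beta+1$ and $\alpha$ have norms dividing small quantities like $s(s-1)/2$, so they fail. If $\theta=\beta^{s-1}-1$ does not work directly, I would try a resultant-type quantity instead. One option is $\prod_{i}\prod_{\zeta^{s-1}=1}(\beta_i-\zeta)$, the resultant of the minimal polynomial of $\beta$ with $z^{s-1}-1$, which is a nonzero integer. Each $(s-1)$-th root of unity contributes one factor of $\ell$ after reduction modulo $\ell$, giving the exponent $s-1$, and its absolute value is bounded by $\prod_i(|\beta_i|^{s-1}+1)$.
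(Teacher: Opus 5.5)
There is a genuine gap, and it sits exactly at the step you flag as the main obstacle: neither candidate for $\theta$ works. Since $\prod_{\zeta^{s-1}=1}(\beta_i-\zeta)=\beta_i^{s-1}-1$, your resultant is just $N(\beta^{s-1}-1)$, so there is really only one candidate. From $\beta^{s-1}-1=(s-1)(\beta+1)/\beta$ you get $N(\beta^{s-1}-1)=(s-1)^d\,N(\beta+1)/N(\beta)$. Here $N(\beta+1)=\pm P(-1)$, with $P$ the minimal polynomial of $\beta$, divides $2$ for even $s$ and $s(s-1)/2$ for odd $s$. Hence $v_\ell(N(\theta))\le (d+1)\,v_\ell(s-1)$. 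So the divisibility $\ell^{s-1}\mid N(\theta)$ you need would already force $d\gtrsim (s-1)/v_\ell(s-1)$, and the argument is circular. Whatever $\ell$-divisibility there is comes only from the factor $(s-1)^d$, and it is exactly offset by the archimedean size of the conjugates. Your count ``one factor of $\ell$ per root of unity'' is also wrong. Modulo a prime above $\ell$, the $(s-1)$-th roots of unity collapse in blocks of size $\ell^{v_\ell(s-1)}$, and at an $\ell$-adic unit root one has exactly $v(\beta_i^{s-1}-1)=v_\ell(s-1)+v(\beta_i+1)$.

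The element that works is the one you discarded: $\beta$ itself, or $\theta=\beta^{s-1}$. Because $\beta=(s-1)/\alpha$, one has $\beta^{s-1}=s+\alpha$. So $N(\beta^{s-1})=N(\beta)^{s-1}$ is the paper's $M=n^{s-1}$, with $n=N(\beta)$ a divisor of $s-1$ (Lemma~\ref{lem:MN}). The norm of $\beta$ need not carry a high power of $\ell$. Raising to the $(s-1)$-th power costs nothing, since every conjugate satisfies $|\beta_j|^{s-1}=|s+\alpha_j|<e(s-1)$. The paper gets this from $|\alpha_j|>R$ for $s\ge 26$. Your own estimate from $|\beta|^{s-1}=|s+(s-1)/\beta|$, namely $|\beta_j|^{s-1}<2s-1<e(s-1)$, would also do, and for all $s\ge3$.

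This exposes the second missing ingredient: you must prove $N(\beta)\neq\pm1$. Your Newton polygon computation is correct, but it only says that each $\ell\mid s-1$ yields exactly one $\ell$-adic non-unit root of $\beta^s-s\beta-(s-1)$. Irreducibility is unknown, so nothing puts that root in the Galois orbit of $\beta$. You therefore cannot just ``run the argument with $\ell=p_s$'', and ``any $\ell\mid s-1$ would do'' is unjustified.

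The paper settles this archimedeanly. By Proposition~\ref{prop:s-1}, every root other than the exceptional real one has $|\alpha|<s-1$, i.e.\ $|\beta_j|>1$, so $|N(\beta)|>1$. The only exception is when the small negative root (even $s$, where $|\beta|^{s-1}=c(s)<1$) is a conjugate. That case is absorbed by $\rho_m>\pi$ (Lemma~\ref{lem:afms}) when $d\ge3$, or by $s+r_0>5$ when $d=2$. Then $|N(\beta)|\ge p_s$ simply because it is a divisor of $s-1$ other than $\pm1$. That, not a free choice of $\ell$, is why the least prime factor appears in $\delta(s)$.
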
 
\begin{remark} \label{rem:deg} 
If $s$ is odd then $p_s = 2$, whereas if $s$ is even then $p_s$ is an odd prime  
greater than or equal to $3$.  
In Theorems \ref{thm:deg} and \ref{thm:degree}, it is practical to 
employ, in place of $\delta(s)$, the lower bounds 
\begin{equation} \label{eqn:delta}
\delta_0(s) := \frac{(\log 3) (s-1)}{1 + \log(s-1)} \quad \mbox{when $s$ is even}; \qquad 
\delta_1(s) := \frac{(\log 2) (s-1)}{1 + \log(s-1)} \quad \mbox{when $s$ is odd}.   
\end{equation}
We remark that $\delta_0(s)$ and $\delta_1(s)$ are strictly increasing 
functions in $s \ge 2$ tending to infinity as $s \to \infty$.       
\end{remark}
\par
The proof of Theorem \ref{thm:deg} as well as that of Theorem \ref{thm:degree} 
is given at the end of \S \ref{ss:arith}. 
\subsection{Real and Complex Roots} \label{ss:rcr} 
We investigate the real and complex roots of the polynomial $\psi_s(z)$. 
We begin with the real roots. 
\begin{lemma} \label{lem:realr} 
The real roots of $\psi_s(z)$ are located in the following manner. 
\begin{enumerate}
\setlength{\itemsep}{-1pt}
\item If $s \ge 2$ is even, then there are one positive root and one negative root, both simple.  
\item If $s \ge 3$ is odd, then there are one positive root, simple, 
and one negative root $z = 1-s$, double.  
\end{enumerate}
\end{lemma}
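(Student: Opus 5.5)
Since $\psi_s(z) = z^{s-1}(z+s) - (s-1)^{s-1}$ is a real polynomial of degree $s$, the plan is to use the derivative $\psi_s'(z) = s\,z^{s-2}(z+s-1)$ to get the monotonicity intervals of $\psi_s$ on $\bR$. The positive root is already known from Lemma \ref{lem:algequ} to be unique and simple, so only the negative axis remains. On $(-\infty,0)$ the critical points are $z = 1-s$ and, when $s \ge 3$, the point $z=0$ is also critical (it lies on the boundary, and it matters only through the sign of $z^{s-2}$). The parity of $s$ decides the sign of $z^{s-2}$ for $z<0$, so the argument splits into the two cases of the lemma.

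\emph{Case $s$ even.} For $z<0$ we have $z^{s-2} > 0$, so $\psi_s'$ has the sign of $z+s-1$. Thus $\psi_s$ is decreasing on $(-\infty, 1-s)$ and increasing on $(1-s, 0)$, with a global minimum on $(-\infty,0]$ at $z=1-s$. We have $\psi_s(0) = -(s-1)^{s-1} < 0$, and $\psi_s(z) \to +\infty$ as $z \to -\infty$ because the degree $s$ is even and the polynomial is monic. Hence $\psi_s$ is negative on $[1-s, 0]$, and exactly one sign change occurs on $(-\infty, 1-s)$, where $\psi_s$ is strictly monotone. This gives exactly one negative root, and it is simple because $\psi_s' \ne 0$ there. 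For $s=2$ we have $z^{0}=1$ and the same reasoning applies directly.

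\emph{Case $s$ odd ($s\ge3$).} For $z<0$ we have $z^{s-2}<0$, so $\psi_s'$ has the sign opposite to $z+s-1$. Thus $\psi_s$ is increasing on $(-\infty,1-s)$ and decreasing on $(1-s,0)$, with a maximum at $z=1-s$. Direct evaluation gives
\[
\psi_s(1-s) = (1-s)^{s-1}\cdot 1 - (s-1)^{s-1} = (s-1)^{s-1}-(s-1)^{s-1} = 0,
\]
since $s-1$ is even. Therefore $z=1-s$ is a root with $\psi_s'(1-s)=0$, and it is a double root because $\psi_s''(1-s) = s(1-s)^{s-2} \ne 0$. As $1-s$ is the strict maximum, $\psi_s<0$ at all other negative points, so there are no further negative roots. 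Together with Lemma \ref{lem:algequ} for the positive root, this proves the claim.

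The only real obstacle is bookkeeping the parity of $z^{s-2}$ and evaluating $\psi_s(1-s)$ correctly; the rest is elementary calculus.
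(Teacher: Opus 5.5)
Your proof is correct and follows the paper's argument: you split by parity using the sign of $\psi_s'(z)=s\,z^{s-2}(z+s-1)$ on the negative axis. In the odd case you evaluate $\psi_s(1-s)=0$ with $\psi_s''(1-s)=s(1-s)^{s-2}\neq 0$, and in the even case you use monotonicity plus the intermediate value theorem. The differences are cosmetic: where you use $\psi_s\to+\infty$ as $z\to-\infty$, the paper evaluates $\psi_s(-s-1)>0$, which also places the negative root in $(-s-1,\,1-s)$. You also cite Lemma~\ref{lem:algequ} for the positive root; that is fine, since its argument does not depend on any solution $\lambda$.
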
 
{\it Proof}. 
First we find $\psi_s'(z) = s z^{s-2}(z + s -1)$. 
Since $\psi_s'(z) > 0$ for $z > 0$ and $\psi_s(0) = -(s-1)^{s-1} < 0 < \psi_s(s-1) = 2 (s-1)^s$, 
the polynomial $\psi_s(z)$ has a unique positive root, which is simple. 
When $s$ is even,  since $\psi_s'(z) < 0$ for $z < 1-s$, $\psi_s'(z) > 0$ for $z \in (1-s, \, 0)$ 
and $\psi_s(-s-1) = (s+1)^{s-1} - (s-1)^{s-1} > 0 > \psi_s(1-s) = -2 (s-1)^{s-1}$ 
and $\psi_s(0) = -(s-1)^{s-1} < 0$, $\psi_s(z)$ has a unique negative root, 
which is simple and lies in $(-s-1, \, 1-s)$. 
When $s$ is odd, since $\psi_s'(z) > 0$ for $z < 1-s$, $\psi_s'(z) < 0$ for $z \in (1-s, \, 0)$ 
and $\psi_s(1-s) = 0 > \psi_s(0) = -(s-1)^{s-1}$, $\psi_s(z)$ has a unique negative root 
at $z = 1-s$, which is double, because $\psi_s''(1-s) = s (1-s)^{s-2} < 0$.   
\hfill $\Box$ \par\medskip
Change of variable $z = -s - t$ transforms equation \eqref{eqn:algeq} into 
\begin{equation} \label{eqn:gs}
u_s(t) = (-1)^s \qquad \mbox{with} \qquad 
u_s(t) := t \left(\frac{s + t}{s-1}\right)^{s-1}. 
\end{equation}
Basically the parameter $s$ takes integer values, but when doing calculus 
with respect to $s$, we allow for $s$ to be a real parameter greater than $1$.    
We shall often use the inequality  
\begin{equation} \label{eqn:log}  
\log(1 + t) < t \quad \mbox{for any $t > -1$ with equality  only when $t = 0$}.   
\end{equation}
\begin{lemma} \label{lem:c(s)} 
For each $s > 1$ the equation $u_s(t) = 1$ has a unique solution $t = c(s)$ 
in $t \ge -1$, which is simple and lies in the interval $(0, \, 1)$.    
The solution $c(s)$ is a $C^{\infty}$-function of $s > 1$, which is strictly 
decreasing in $s > 1$ and converges to a constant $c_0$ as $s \to \infty$, 
where $c_0 \approx 0.278465$ is the unique real number such that 
\begin{equation*} \label{eqn:c}
c_0 \, \re^{c_0 + 1} = 1, \qquad 0 < c_0 < 1. 
\end{equation*}
In particular, when $s \ge 2$ is an even integer, the negative root of 
equation \eqref{eqn:algeq} lies at $z = -s- c(s)$. 
\end{lemma}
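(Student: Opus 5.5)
We study $u_s(t) = t\,((s+t)/(s-1))^{s-1}$ on $t \ge -1$. For existence and uniqueness we work with $\log u_s$ on $t>0$, and we handle $[-1,0]$ directly.

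On $[-1,0]$ we have $s+t \ge s-1 > 0$, so $u_s(t) \le 0 < 1$ there; hence any solution of $u_s(t)=1$ in $t\ge -1$ must be positive. For $t>0$ both factors are positive and increasing, so $u_s$ is strictly increasing with $u_s'(t) = ((s+t)/(s-1))^{s-1}\,(s+st)/(s+t) > 0$. Since $u_s(0)=0$, there is at most one root, and it is simple because $u_s'>0$.

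To locate the root in $(0,1)$ we evaluate $u_s(1) = ((s+1)/(s-1))^{s-1} > 1$. Continuity then gives $c(s) \in (0,1)$. Smoothness of $c(s)$ in $s$ follows from the implicit function theorem applied to $F(s,t) := \log t + (s-1)\log\frac{s+t}{s-1} = 0$, since $\partial_t F = 1/t + (s-1)/(s+t) > 0$.

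For monotonicity we use $c'(s) = -\partial_s F/\partial_t F$. Here
\[
\partial_s F = \log\frac{s+t}{s-1} + \frac{s-1}{s+t} - 1 .
\]
Write $\tau := \frac{t+1}{s-1} > 0$, so that $\frac{s+t}{s-1} = 1+\tau$ and $\partial_s F = \log(1+\tau) - \frac{\tau}{1+\tau}$. Applying \eqref{eqn:log} to $\log\frac{1}{1+\tau} = \log\bigl(1 - \frac{\tau}{1+\tau}\bigr)$ gives $\log(1+\tau) > \frac{\tau}{1+\tau}$. Hence $\partial_s F>0$ and $c'(s)<0$.

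For the limit, $c(s)$ is decreasing and bounded below by $0$, so it converges to some $c_* \ge 0$. Passing to the limit in $(s-1)\log(1+\frac{c+1}{s-1}) \to c_*+1$ turns the equation into $c_*\, \re^{c_*+1} = 1$. This forces $c_*>0$, and since $t\,\re^{t+1}$ is increasing, $c_*$ is the unique root $c_0$; evaluating at $t=1$ gives $\re^2>1$, so $c_0 < 1$. This limit step is the one needing care: the convergence $(s-1)\log(1+\frac{c+1}{s-1}) \to c_*+1$ must be justified uniformly, which holds because $c(s)$ stays in $[0,1]$.

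Finally, take $s$ even. Equation \eqref{eqn:gs} reads $u_s(t)=1$ under $z=-s-t$. By Lemma \ref{lem:realr}, the unique negative root lies in $(-s-1,1-s)$, which corresponds to $t \in (-1,1)$. The uniqueness established above then identifies this root as $t=c(s)$, i.e. $z = -s-c(s)$.
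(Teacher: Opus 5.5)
Your proof is correct and follows essentially the same route as the paper: monotonicity of $u_s$ with $u_s(0)=0<1<u_s(1)$, the implicit function theorem with the sign of the $s$-derivative settled by inequality \eqref{eqn:log} (your $\log(1+\tau)-\tau/(1+\tau)$ is exactly the paper's $-\{\frac{1+t}{s+t}+\log(1-\frac{1+t}{s+t})\}$), and the limit $c_0$ obtained from $(1+\frac{t}{s})^{s-1}(1-\frac{1}{s})^{1-s}\to \re^{t+1}$. The differences are cosmetic: you work with $\log u_s$, you dispose of $[-1,0]$ by a sign argument instead of using $u_s'>0$ on all of $t>-1$, and you spell out the identification of the negative root via Lemma \ref{lem:realr}, which the paper leaves implicit.
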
 
{\it Proof}. 
The number $c_0$ exists uniquely, because the function 
$v(t) := t \, \re^{t+1}$ satisfies $v(0) = 0 < 1 < v(1) = \re^2$ and 
$v'(t) = (t + 1) \, \re^{t+1} > 0$ for any $t \in (0, \, 1)$. 
A numerical computation shows $c_0 \approx 0.278465$. 
Since 
$$
u_s'(t) = \frac{s (t+1)}{s-1} \left( \frac{s + t}{s-1} \right)^{s-2} > 0 
\qquad \mbox{for} \quad t > -1, 
$$
and $u_s(0) = 0 < 1 < u_s(1) = \{ (s+1)/(s-1) \}^{s-1}$, 
the equation $u_s(t) = 1$ has a unique solution $t = c(s)$ in $t \ge -1$, which is simple 
and actually lies in $(0, \, 1)$.   
Thinking of $u(s, t) := u_s(t)$ as a real $C^{\infty}$-function of two variables 
$(s, t) \in  (1, \, \infty) \times (0, \, 1)$, we find that the partial derivatives of 
$u(s, t)$ are given by 
$$
\dfrac{\partial u}{\partial s} = - u(s, t) \cdot \left\{ \frac{1 + t}{s + t} + 
\log \left(1 - \frac{1 + t}{s + t} \right) \right\}, \qquad 
\dfrac{\partial u}{\partial t} = u(s, t) \cdot \frac{s(1 + t)}{t( s + t)} > 0.   
$$
As $u(s, c(s)) = 1$, the implicit function theorem implies that $c(s)$ is a 
$C^{\infty}$-function of $s > 1$ with derivative   
$$
c'(s) = \frac{c(s) \, \{ s + c(s) \} }{s \, \{ 1 + c(s) \}} 
\left\{ \frac{1 + c(s)}{s + c(s)} + \log \left( 1 - \frac{1 + c(s) }{s + c(s)} \right) \right\} < 0,    
$$
where the negativity of $c'(s)$ follows from estimate \eqref{eqn:log}.    
Thus the function $c(s) \in (0, \, 1)$ is strictly decreasing in $s > 1$ and converges 
to a number $c_1 \in [0, \, 1)$ as $s \to \infty$. 
Since the function $u_s(t) = t (1 + t/s)^{s-1}(1-1/s)^{1-s}$ converges to $v(t) = t \, \re^{t+1}$ 
uniformly on $[0, \, 1]$ as $s \to \infty$, we have $v(c_1) = 1$ and $0 \le c_1 < 1$, actually 
$c_1 \neq 0$.  
By the uniqueness of such a number we have $c_1 = c_0$. 
Hence $c(s)$ converges to $c_0$ as $s \to \infty$. 
\hfill $\Box$ \par\medskip
All (complex) roots of equation \eqref{eqn:algeq} except for the negative 
one admit the following estimate. 
\begin{proposition} \label{prop:s-1}
Any root $\alpha$ of equation \eqref{eqn:algeq} except for the negative 
one satisfies $|\alpha| < s-1$. 
\end{proposition}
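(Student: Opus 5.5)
The plan is to shift the origin to $z=-s$ and turn the claim into a statement about a holomorphic map of the closed unit disk, then count solutions with the argument principle.

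\textbf{Reduction.} Let $\alpha$ be a root of \eqref{eqn:algeq} with $|\alpha|\ge s-1$. From $|\alpha|^{s-1}|\alpha+s|=(s-1)^{s-1}$ we get
\[
|\alpha+s|=\bigl((s-1)/|\alpha|\bigr)^{s-1}\le 1 .
\]
So $\alpha=-s+w$ with $|w|\le 1$. Substituting into \eqref{eqn:algeq}, exactly as in the change of variables leading to \eqref{eqn:gs} (with $w=-t$), the equation becomes
\[
g(w):=w\Bigl(1-\frac{w-1}{s-1}\Bigr)^{s-1}=\varepsilon_s,\qquad \varepsilon_s:=(-1)^{s-1}.
\]
It therefore suffices to show the following. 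In the closed disk $|w|\le 1$, the solutions of $g(w)=\varepsilon_s$ correspond only to the negative root of $\psi_s$ described in Lemma \ref{lem:realr}: $w=-c(s)$ when $s$ is even (Lemma \ref{lem:c(s)}), and the double root $w=1$, i.e.\ $z=1-s$, when $s$ is odd. Every other root then has $|\alpha|<s-1$.

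\textbf{Boundary behaviour.} For $w=e^{\ri\theta}$ we have $|s-w|\ge s-1$, with equality only at $w=1$. Hence $|g(w)|\ge 1$ on $|w|=1$, with equality only at $w=1$, where $g(1)=1$. For even $s$, $\varepsilon_s=-1\ne g(1)$, so $g-\varepsilon_s$ has no zeros on the circle. For odd $s$ the only boundary solution is $w=1$; it gives $z=1-s$, which is the negative root.

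Expanding $\log g(1+\epsilon)$ gives
\[
\log g(1+\epsilon)=-\frac{s}{2(s-1)}\,\epsilon^{2}+O(\epsilon^{3}).
\]
So $w=1$ is a simple critical point of $g$ with critical value $1$, consistent with the double root found in Lemma \ref{lem:realr}.

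\textbf{Counting interior solutions.} Inside the open disk, $g$ has exactly one zero, $w=0$; its other zero $w=s$ lies outside. I will count zeros of $g-\varepsilon_s$ in $|w|<1$ by comparing with $g$ on a contour $\Gamma$ that follows the unit circle but is indented inward along a small arc around $w=1$.

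On the part of $\Gamma$ on the unit circle away from $w=1$, we have $|g|>1=|\varepsilon_s|$ strictly, by compactness. On the small indentation, the quadratic expansion above controls $g$. On the inward arc $w=1-\rho e^{\ri\phi}$ with $|\phi|<\pi/2$, we have $|g|\approx 1-\tfrac{s}{2(s-1)}\rho^2\cos 2\phi$. This is bounded away from $-1$, so the comparison works directly when $s$ is even. When $s$ is odd, $g-1$ near $w=1$ behaves like $-\tfrac{s}{2(s-1)}(w-1)^2$, and I will compute the winding of $g-1$ along the indentation by hand: its argument changes by about $2\cdot(\pi/2)$ in the quadratic term rather than by $0$. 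I expect this to reduce the count to zero interior solutions.

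Either way, Rouché's theorem, or the argument principle along $\Gamma$, shows that the number of interior solutions of $g=\varepsilon_s$ is $1$ for even $s$ and $0$ for odd $s$.

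\textbf{Identifying the solutions.} For even $s$, the single interior solution must be the real one $w=-c(s)$, i.e.\ $t=c(s)\in(0,1)$ in Lemma \ref{lem:c(s)}. This is the negative root $z=-s-c(s)$. For odd $s$, we argued that there is none besides the boundary point $w=1$; this is consistent with $u_s$ being increasing on $t>-1$ with $u_s(-1)=-1$. Hence every root other than the negative one lies in $|z|<s-1$.

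\textbf{Main obstacle.} The delicate step is the tangency at $w=1$, where $|g|=1$ on the boundary, so Rouché cannot be applied naively on the unit circle. I plan to handle it with the indented contour and the local quadratic expansion of $g$ at its critical point $w=1$. If this becomes messy, the fallback is to apply Rouché to $\psi_s$ itself on the circle $|z|=R$ with $R\downarrow s-1$ and track how the double root $1-s$ (for odd $s$), or the nearby simple negative root (for even $s$), crosses that circle.
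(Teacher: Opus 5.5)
Your proposal is correct, and it takes a genuinely different route from the paper.

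\textbf{Comparison with the paper.} The paper works on the circle $|z|=s-1$. There $|F_s(z)|=|z+s|\ge 1$, so Rouch\'e applied to $F_s(z)=w$ with $|w|<1$ meets no tangency problem. It gives exactly one root $g(w)$ outside the circle, holomorphic in $w\in\bD$. The boundary values $w=\pm1$ are then handled softly:
\begin{enumerate}
\item For $F_s=1$, every root outside the circle is $\lim_{w\to 1}g(w)$ via a local inverse, hence equals $-s-c(s)$.
\item For $F_s=-1$, complex conjugation forces any such root to be real. Monotonicity of $F_s$ on $(-\infty,1-s)$ then excludes it.
\end{enumerate}
You instead use the modulus identity to trade ``$|\alpha|\ge s-1$'' for ``$|\alpha+s|\le 1$''. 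You then count roots in the unit disk around $-s$, where $g$ has a single simple zero, and treat the tangency at $w=1$ directly with the argument principle. This gives a uniform counting argument with no analytic continuation or conjugation trick. The price is an explicit local computation at the degenerate point when $s$ is odd.

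\textbf{Remarks on executing it.} For even $s$ you need no indentation. On $|w|=1$ you have $|g|\ge 1$ and $g\neq -1$, so $g+t\neq 0$ for $t\in[0,1]$. Hence the winding of $g+1$ equals that of $g$, namely $1$.

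For odd $s$ you must use the argument principle, not Rouch\'e. At $w=1-\rho$ one has $0<g(1-\rho)<1$, so even the symmetric form $|\varepsilon_s|<|g|+|g-\varepsilon_s|$ becomes an equality there. The winding computation comes out as you hoped, but the angle you quote is off:
\begin{enumerate}
\item Along the arc of the unit circle, $|g|>1$, and $g-1\approx \frac{s}{2(s-1)}\delta^2$ lies near the positive real axis at both ends $e^{\pm\ri\delta}$. So $\arg(g-1)$ increases by $2\pi+o(1)$.
\item The inward half-circle about $w=1$ is traversed clockwise. There $g-1=-\frac{s}{2(s-1)}(w-1)^2\,(1+O(\rho))$, so $\arg(g-1)$ decreases by $2\pi+o(1)$, not by ``$2\cdot(\pi/2)$''.
\end{enumerate}
The total winding is $0$, so there are no solutions in the open disk, and your identification of the roots then finishes the proof.
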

{\it Proof}. 
The case where $s$ is even is proved in Lemma \ref{lem:fsp1} below,  
while the odd case is proved in Lemma \ref{lem:fsm1}. 
\hfill $\Box$ \par\medskip
In order to formulate Lemmas \ref{lem:fsp1} and \ref{lem:fsm1}, it is 
convenient to rewrite equation \eqref{eqn:algeq} as   
\begin{equation} \label{eqn:fs}
F_s(z) = (-1)^s \qquad \mbox{with} \qquad    
F_s(z) := - \frac{(-z)^{s-1} (z + s)}{(s-1)^{s-1}}.   
\end{equation}
The polynomial $F_s(z)$ has exactly two critical points; 
one is $z = 1-s$ with critical value $-1$ and the other 
is $z = 0$ with critical value $0$, since the derivative of $F_s(z)$ is calculated as   
\begin{equation} \label{eqn:fsd}
F_s'(z) = \frac{s (-z)^{s-2}(z + s -1)}{(s-1)^{s-1}}.    
\end{equation}
\par
Let $C := \{ z \in \bC \mid |z| = s-1 \}$ be the circle of radius $s-1$ 
with center at the origin.   
We have
\begin{equation} \label{eqn:fse} 
|F_s(z)| \ge 1 \quad \mbox{for any $z \in C$ with equality only when $z = 1-s$}.    
\end{equation} 
Indeed, for any $z \in C$ we have $\rRe(z) \ge 1-s$ and hence       
$$
|F_s(z))| 
= \frac{|z|^{s-1} |z + s| }{(s - 1)^{s-1}} = | z + s | \ge \rRe(z) + s \ge 1 
\quad \mbox{with equalities only when $z = 1-s$}.     
$$
Let $D := \{ z \in \bC \mid |z| < s-1 \}$ and $E := \{ z \in \bC \mid |z| > s-1 \}$. 
Denote their closures by $\overline{D}$ and $\overline{E}$.      
\begin{lemma} \label{lem:fsp1} 
Any root $\alpha$ of the equation $F_s(z) = 1$ other than 
$z = -s -c(s)$ satisfies $|\alpha| < s-1$, where $c(s)$ is the function 
given in Lemma $\ref{lem:c(s)}$.    
\end{lemma}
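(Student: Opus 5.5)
Lemma \ref{lem:fsp1} is the even case ($(-1)^s=1$), so the plan is to count roots by the argument principle on the disk $D$, using the critical-point structure \eqref{eqn:fsd} and the boundary estimate \eqref{eqn:fse}. Here $F_s$ has degree $s$, so $F_s(z)=1$ has exactly $s$ roots counted with multiplicity. By Lemma \ref{lem:realr} and Lemma \ref{lem:c(s)}, exactly one of them is negative, namely $-s-c(s)$, and it lies in $E$ because $s+c(s)>s-1$. Moreover no root lies on the circle $C$: by \eqref{eqn:fse}, $|F_s(z)|=1$ on $C$ only at $z=1-s$, and there $F_s(1-s)=-1\neq 1$. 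So it suffices to show that $F_s(z)=1$ has exactly $s-1$ roots in $D$.

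To count roots in $D$, I would apply Rouché's theorem in the weak form, or the argument principle to $F_s(z)-1$ on $C$. On $C$ we have $|F_s(z)|\ge 1$, with equality only at $z=1-s$, where $F_s=-1$. Hence $F_s(z)-1\ne 0$ on $C$, and by the argument principle the number of roots in $D$ equals the winding number of $F_s(C)$ around $1$.

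To get this winding number, I would use the homotopy $F_s(z)-\tau$ for $\tau\in[0,1]$. The claim is that $F_s(z)\ne\tau$ on $C$ for every such $\tau$. For $\tau<1$ this is immediate since $|F_s|\ge 1>\tau$ on $C$. For $\tau=1$ it is the observation above, that the only boundary point with $|F_s|=1$ has value $-1$. So the winding number about $\tau$ is constant in $\tau$, and equals the number of zeros of $F_s$ in $D$. Those zeros are $z=0$ with multiplicity $s-1$ and $z=-s$, and $-s$ lies outside $D$. Therefore the count is $s-1$.

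Consequently $s-1$ roots lie in $D$, one root ($-s-c(s)$) lies in $E$, and none lie on $C$, which accounts for all $s$ roots. Every root other than $-s-c(s)$ therefore satisfies $|\alpha|<s-1$. The main obstacle is justifying that no root lies on $C$ and that the homotopy stays away from $C$ throughout. Both rest on the equality case of \eqref{eqn:fse} together with the sign $F_s(1-s)=-1$, which is where evenness of $s$ enters: the target value is $+1$, not $-1$.
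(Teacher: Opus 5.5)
Your proof is correct, but it handles the decisive step differently from the paper.

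The paper applies Rouch\'e only for $|w|<1$, where $|w|<1\le |F_s|$ on $C$ is a strict comparison. This gives a unique outer root $g(w)\in E$ that depends holomorphically on $w$. The paper then shows that any root $b\in E$ of $F_s(z)=1$ is non-critical, so its local inverse branch $h_b$ must coincide with $g$ for $|w|<1$ near $1$. Comparing with the branch through $-s-c(s)$ and letting $w\to 1$ forces $b=-s-c(s)$.

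You instead work directly at the boundary value. The equality case of \eqref{eqn:fse}, together with $F_s(1-s)=-1$, gives $1\notin F_s(C)$. So the whole segment $[0,1]$ lies in one component of $\bC\setminus F_s(C)$, and the zero count in $D$ is $s-1$ at $\tau=1$ just as at $\tau=0$. Degree counting with the known root $-s-c(s)\in E$ then finishes the job, and incidentally shows that this root is simple.

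Your route is shorter and needs no inverse-function or continuation argument. The paper's route is set up so that the same branch $g$ can be reused in Lemma~\ref{lem:fsm1}. There the target value $-1=F_s(1-s)$ does lie on $F_s(C)$ and a double root sits on $C$, so your winding-number count cannot be carried directly up to $\tau=-1$.

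One small correction: parity plays no role in this lemma. Since $F_s(-s-t)=u_s(t)$ for every $s$, Lemma~\ref{lem:c(s)} alone shows that $-s-c(s)$ is a root of $F_s(z)=1$ in $E$. So you do not need Lemma~\ref{lem:realr}, and your argument proves the statement for odd $s$ as well. What matters is only that $1\neq F_s(1-s)$, not that $s$ is even.
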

{\it Proof}. 
It is evident from definition \eqref{eqn:fs} that the equation 
$F_s(z) = 0$ has only one root $z = 0$ of multiplicity $s-1$ in $D$.    
For any $w \in \bD := \{ w \in \bC \mid |w| < 1 \}$, estimate \eqref{eqn:fse} 
shows that $|w| < 1 \le |F_s(z)|$ for all $z \in C$.  
Rouch\'{e}'s theorem implies that the algebraic equation $F_s(z) = w$ (of 
degree $s$) has exactly $s-1$ roots in $\overline{D}$ counted with multiplicity.   
The remaining one root, say $z = g(w)$, is simple and lies in $E$.   
By the inverse function theorem $g : \bD \to E$, $w \mapsto g(w)$ 
is a holomorphic function. 
Let $b$ be any root of $F_s(z) = 1$ in $E$.  
Since $F_s'(b) \neq 0$, the inverse function theorem gives a holomorphic 
function $h_b : U_b \to V_b$ such that $h_b(1) = b$ and $F_s(h_b(w)) = w$ for any $w \in U_b$,  
where $U_b$ is an open set in $\bC$ with $1 \in U_b$, while $V_b$ is an open set 
with $b \in V_b \subset E$.  
By the uniqueness of $g(w)$ we have $g(w) = h_b(w)$ for any $w \in \bD \cap U_b$. 
For the same reason we have $g(w) = h_a(w)$ for any $w \in \bD \cap U_a$ 
with $a := -s-c(s)$.   
Thus $h_b(w) = h_a(w)$ for any $w \in \bD \cap U_a \cap U_b$. 
Letting $\bD \cap U_a \cap U_b \ni w \to 1$, we have $b = h_b(1) = h_a(1) = a$.  
Therefore $z = -s- c(s)$ is the only root of $F_s(z) = 1$ in $\overline{E}$.    
\hfill $\Box$ 
\begin{lemma} \label{lem:fsm1} 
Any root $\alpha$ other than $z = 1-s$ 
of the equation $F_s(z) = -1$ satisfies $|\alpha| < s-1$.   
\end{lemma}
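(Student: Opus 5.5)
We mimic the proof of Lemma \ref{lem:fsp1}, now with the equation $F_s(z) = -1$ for odd $s \ge 3$. Here $z = 1-s$ is the critical point of $F_s$ with critical value $-1$, and by Lemma \ref{lem:realr} it is a double root. The count has to be redone, because the value $-1$ lies on the boundary of the disc $\bD$ and the boundary point $z = 1-s$ of $C$ attains $|F_s| = 1$.

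\emph{Step 1 (uniformizing map).} As before, Rouch\'{e}'s theorem with estimate \eqref{eqn:fse} shows the following for every $w \in \bD$: the equation $F_s(z) = w$ has exactly $s-1$ roots in $D$ (none lies on $C$, since $|F_s| \ge 1 > |w|$ there) and exactly one root $g(w) \in E$. The map $g : \bD \to E$ is holomorphic, because $F_s'$ vanishes only at $0$ and $1-s$, neither of which lies in $E$. Thus $F_s$ restricted to $g(\bD)$ is a biholomorphic inverse of $g$.

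\emph{Step 2 (no extra roots on $C$).} Suppose $|\alpha| = s-1$ and $F_s(\alpha) = -1$. By \eqref{eqn:fse}, equality $|F_s(\alpha)| = 1$ forces $\alpha = 1-s$. Hence the only root on $C$ is the trivial one.

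\emph{Step 3 (no roots in $E$).} Suppose $b \in E$ with $F_s(b) = -1$. Since $b \ne 1-s$, we have $F_s'(b) \neq 0$, so a local inverse $h_b$ exists on a neighborhood $U_b$ of $-1$ with $h_b(-1) = b$ and values in $E$. For $w \in \bD \cap U_b$, the point $h_b(w)$ is a root of $F_s(z) = w$ lying in $E$, so by uniqueness $h_b(w) = g(w)$.

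Now let $w \to -1$ radially within $\bD$, say $w = -t$ with $t \uparrow 1$. On the real axis left of $1-s$, the function $F_s$ is real: $F_s(z) = -|z|^{s-1}(z+s)/(s-1)^{s-1}$ with $s-1$ even. One checks $F_s(z) \to +\infty$ as $z \to -\infty$ and $F_s(1-s) = -1$. By Lemma \ref{lem:realr} the only negative real root of $F_s(z) = -1$ is $1-s$, so on $(-\infty, 1-s)$ we have $F_s > -1$. Since $F_s$ is continuous with $F_s \to +\infty$, $F_s$ takes every value in $(-1, 1)$ on $(-\infty, 1-s) \subset E$. Hence $g(-t)$ is real and lies in $(-\infty, 1-s)$. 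Sending $t \to 1$, the points $g(-t)$ stay bounded, since $|F_s(z)| \to \infty$ as $|z| \to \infty$. Any limit point $z^*$ is real, satisfies $z^* \le 1-s$, and has $F_s(z^*) = -1$, which forces $z^* = 1-s$. But $g(-t) = h_b(-t) \to b$, so $b = 1-s \notin E$, a contradiction.

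\emph{Step 4 (conclusion).} All $s$ roots of $F_s(z) = -1$, counted with multiplicity, therefore lie in $\overline{D}$, and the only one on $C$ is the double root $1-s$. The remaining $s-2$ roots satisfy $|\alpha| < s-1$.

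The main obstacle is Step 3, namely making the boundary limit of $g$ at $w = -1$ rigorous. Unlike the even case, there is no simple root in $E$ to anchor $h_a$. The plan is to use the explicit real branch on $(-\infty, 1-s)$ to identify $g$ along the radius, which pins its boundary value to the double root.
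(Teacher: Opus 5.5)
Your proof is correct and follows essentially the paper's route: both use the branch $g:\bD\to E$ from Lemma~\ref{lem:fsp1}, identify it near $-1$ with a local inverse at a hypothetical root $b\in E$, show $b$ must be real, and conclude from $F_s>-1$ on $(-\infty,1-s)$. The only difference is in how realness is obtained. The paper gets $b=\bar b$ by comparing $h$ with $h^*(w)=\overline{h(\bar w)}$, whereas you identify $g(-t)$ explicitly as the real preimage in $(-\infty,1-s)$; then $h_b(-t)\to b$ already places $b$ in $(-\infty,1-s]$, so your boundedness step is superfluous. Also, if you use $F_s'<0$ on $(-\infty,1-s)$ instead of Lemma~\ref{lem:realr}, as the paper does, your argument no longer needs $s$ to be odd.
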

{\it Proof}.  
Suppose that the equation $F_s(z) = -1$ has a root $a \in E$.       
Since $F_s'(a) \neq 0$, the inverse function theorem gives a holomorphic function 
$h : U \to V$ such that $h(-1) = a$ and $F_s(h(w)) = w$ for any $w \in U$,  
where $U$ is an open set in $\bC$ with $-1 \in U$, while $V$ is 
an open set with $a \in V \subset E$. 
Notice that $g(w) = h(w)$ for any $w \in \bD \cap U$, where 
$g : \bD \to E$ is the same function as that in the proof of 
Lemma \ref{lem:fsp1}. 
Put $U^* := \{ \bar{w} \mid w \in U \}$ and $V^* := \{ \bar{z} \mid z \in V \}$. 
The complex conjugate $\bar{a}$ to $a$ is also a root of  $F_s(z) = -1$  
and the holomorphic function $h^* : U^* \to V^*$ defined by 
$h^*(w) := \overline{h(\bar{w})}$ satisfies $h^*(-1) = \bar{a}$ and 
$F_s(h^*(w)) = w$ for any $w \in U^*$, thus   
$g(w) = h^*(w)$ for any $w \in \bD \cap U^*$, and hence  
$h(w) = h^*(w)$ for any $w \in \bD \cap U \cap U^*$. 
Letting $\bD \cap U \cap U^* \ni w \to -1$, we have 
$a = h(-1) = h^*(-1) = \bar{a}$, which means that $a$ is a real 
root of $F_s(z) = -1$ in $z < 1-s$.   
However, there is no such root, because formulas \eqref{eqn:fs} and \eqref{eqn:fsd} 
imply $F_s(1-s) = -1$ and $F_s'(z) < 0$ for any $z <1-s$. 
This contradiction shows that there is no root of $F_s(z) = -1$ in $E$ and 
hence in $\overline{E} \setminus \{ 1-s \}$.    
\hfill $\Box$
\subsection{Radial and Angular Equations} \label{ss:rae}
We write equation \eqref{eqn:algeq} in the form $z^{s-1} (z + s) = (s-1)^{s-1}$. 
In its polar representation, putting aside its angular component for the moment, 
we consider its radial component $|z|^{s-1} |z + s| = (s-1)^{s-1}$, which reads   
\begin{equation} \label{eqn:abs}
r^{s-1} \rho = (s-1)^{s-1} \qquad \mbox{with} \quad 
\rho = \rho(r, \theta) := |z + s| = \sqrt{s^2 + 2 s r \cos \theta + r^2}   
\end{equation}
in terms of $z = r \re^{\ri \theta}$ with $r > 0$ and $\theta \in \bR$. 
The location of the unique negative root of equation \eqref{eqn:algeq} is 
already known to us, so Proposition \ref{prop:s-1} restricts our 
attention to $0 < r \le s-1$ and $-\pi < \theta < \pi$, where $\theta$ 
may further be restricted to $0 \le \theta < \pi$ up to complex conjugation.   
We take the logarithm of equation \eqref{eqn:abs} to have      
\begin{equation} \label{eqn:re}
h(r, \theta) := \log \rho(r, \theta) + (s-1) \log r - (s-1) \log(s-1) = 0. 
\end{equation}
\begin{lemma} \label{lem:radial} 
For each $\theta \in [0, \, \pi)$ there exists a unique solution $r = r(\theta)$ 
to equation \eqref{eqn:re} such that $0 < r < s-1$.  
As a function of $\theta$, the solution $r = r(\theta)$ is strictly increasing in 
$\theta \in [0, \, \pi)$ and $r(\theta) \to s-1$ as $\theta \to \pi-0$.  
\end{lemma}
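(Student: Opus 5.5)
The plan is to treat $h(r,\theta)$ in \eqref{eqn:re} as a function of $r\in(0,\,s-1]$ for each fixed $\theta$. I will show it is strictly increasing in $r$, tends to $-\infty$ as $r\to+0$, and is positive at $r=s-1$ whenever $\theta\ne\pi$. Existence and uniqueness of $r(\theta)$ then follow from the intermediate value theorem. Monotonicity in $\theta$ will come from the implicit function theorem, and the limit as $\theta\to\pi-0$ from a continuity argument at $\theta=\pi$.

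First I compute
$$
\frac{\partial h}{\partial r}=\frac{r+s\cos\theta}{\rho^2}+\frac{s-1}{r}.
$$
Multiplying by $r\rho^2/s>0$ and expanding $\rho^2=s^2+2sr\cos\theta+r^2$, this has the sign of
$$
r^2+(2s-1)\,r\cos\theta+s(s-1).
$$
This expression is nonincreasing as $\cos\theta$ decreases, since $r>0$. Its minimum, at $\cos\theta=-1$, is $r^2-(2s-1)r+s(s-1)=(r-s)\{r-(s-1)\}$, which is positive for $0<r<s-1$. Hence $\partial h/\partial r>0$ on $(0,\,s-1)$ for every $\theta$, including $\theta=\pi$.

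As $r\to+0$ we have $\rho\to s$, so $h\to-\infty$. At $r=s-1$ the two $\log$ terms cancel and $h(s-1,\theta)=\log|z+s|$ with $|z|=s-1$. By the estimate behind \eqref{eqn:fse}, $|z+s|\ge \rRe(z)+s\ge 1$, with equality only when $z=1-s$, that is $\theta=\pi$. So $h(s-1,\theta)>0$ for $\theta\in[0,\,\pi)$, and there is a unique root $r(\theta)\in(0,\,s-1)$.

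Next, $\partial h/\partial\theta=-sr\sin\theta/\rho^2$, which is negative for $\theta\in(0,\,\pi)$. Because $h_r\ne0$ at the root, the implicit function theorem makes $r(\theta)$ a $C^\infty$ function on $[0,\,\pi)$, and on $(0,\,\pi)$ it has derivative $r'(\theta)=-h_\theta/h_r>0$. Together with continuity at $\theta=0$, this gives strict increase on $[0,\,\pi)$.

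The only delicate step is the limit, and I expect it to be the main obstacle. Since $r(\theta)$ is increasing and bounded by $s-1$, it converges to some $L\le s-1$ with $L>0$. Suppose $L<s-1$. Then $\rho(r,\theta)\to s-L>0$, so $h$ is continuous near $(L,\pi)$, and letting $\theta\to\pi-0$ in $h(r(\theta),\theta)=0$ gives $h(L,\pi)=0$. But $h(\cdot,\pi)=\log(s-r)+(s-1)\log r-(s-1)\log(s-1)$ is strictly increasing on $(0,\,s-1)$ by the derivative computation above, and it vanishes at $r=s-1$. Hence $h(L,\pi)<0$, a contradiction. Therefore $r(\theta)\to s-1$ as $\theta\to\pi-0$.
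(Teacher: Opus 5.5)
Your proof is correct and takes essentially the same route as the paper. Your quadratic $r^2+(2s-1)r\cos\theta+s(s-1)$ is exactly the paper's $v=(s-r)(s-1-r)+r(2s-1)(1+\cos\theta)$, and the rest matches step for step (the boundary behaviour at $r\to+0$ and $r=s-1$, the implicit-function derivative $r'(\theta)=r^2\sin\theta/v>0$, and the limit via $h(\cdot,\pi)<0$ on $(0,\,s-1)$), except that you obtain $h(s-1,\theta)>0$ from the estimate behind \eqref{eqn:fse} rather than from the paper's rewriting $\rho=\sqrt{(s-r)^2+2rs(1+\cos\theta)}>s-r\ge 1$.
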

{\it Proof}. 
Notice that $\rho(r, \theta) = \sqrt{(s-r)^2 + 2 r s (1 + \cos \theta)} > s-r \ge 1$ 
for any $(r, \theta) \in (0, \, s-1] \times [0, \, \pi)$ and 
$$
\lim_{r \to + 0} h(r, \theta) = - \infty, \quad  
h(s-1, \theta) = \log \rho(s-1, \theta) > 0 
\qquad \mbox{for any} \quad \theta \in [0, \, \pi).   
$$
Moreover the partial derivatives of $h = h(r, \theta)$ are calculated as  
$$
\frac{\partial h}{\partial r} = \frac{s v}{r \rho^2} \quad 
\mbox{with} \quad v = v(r, \theta) := (s-r) (s-1-r) + r (2 s-1)(1+\cos \theta) ; 
\quad \frac{\partial h}{\partial \theta} = - \frac{r s \sin \theta}{\rho^2}.  
$$
This implies $(\partial h/ \partial r) (r, \theta) > 0$ for any 
$(r, \theta) \in (0, \, s-1) \times [0, \, \pi]$. 
Thus for each $\theta \in [0, \, \pi)$ there exists a unique solution 
$r = r(\theta) \in (0, \, s-1)$ to equation \eqref{eqn:re}.  
By the implicit function theorem,   
$r'(\theta) = r^2 (\sin \theta)/v > 0$ for any $\theta \in (0, \, \pi)$,  
so the function $r(\theta)$ is strictly increasing in $\theta \in [0, \, \pi)$ 
and has a limit $r_* \in (0, \, \, s-1]$ as $\theta \to \pi-0$. 
Here we have $r_* = s-1$, since 
$h(r_*, \pi) = h(s-1, \pi) = 0 > h(r, \pi)$ for any $r \in (0, \, s-1)$. 
\hfill $\Box$ \par\medskip
The function $r(\theta)$ in Lemma \ref{lem:radial} attains its minimum 
at $\theta = 0$. 
We give a lower bound for the value $r(0)$. 
\begin{lemma} \label{lem:radial0} 
For each $s \ge 26$ the value $r(0)$ admits a lower bound   
\begin{equation} \label{eqn:r(0)}
r(0) > R := \dfrac{s-1}{1 + \dfrac{1 + \log(s-1)}{s-1}}.  
\end{equation}
\end{lemma}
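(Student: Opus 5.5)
Since $h(r,0)$ is strictly increasing in $r\in(0,\,s-1)$ (Lemma \ref{lem:radial}, where $\partial h/\partial r>0$) and $h(r(0),0)=0$, it is enough to show $h(R,0)<0$. Note that $0<R<s-1$, so $R$ lies in the interval where this monotonicity holds.

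Put $L:=1+\log(s-1)$ and $\varepsilon:=L/(s-1)$, so that $R=(s-1)/(1+\varepsilon)$. Because $\rho(R,0)=s+R$, we have
\[
h(R,0)=\log(s+R)-(s-1)\log(1+\varepsilon).
\]
Write $\log(s+R)=\log(s-1)+\log\bigl(\tfrac{s}{s-1}+\tfrac{1}{1+\varepsilon}\bigr)$, and use the standard bound $\log(1+\varepsilon)>2\varepsilon/(2+\varepsilon)$ for $\varepsilon>0$. This gives
\[
(s-1)\log(1+\varepsilon)>\frac{2L}{2+\varepsilon}=L-\frac{L\varepsilon}{2+\varepsilon}.
\]
Hence $h(R,0)<0$ follows from
\[
G(s):=\log\Bigl(\frac{s}{s-1}+\frac{1}{1+\varepsilon}\Bigr)+\frac{L\varepsilon}{2+\varepsilon}<1 .
\]
I chose the Padé-type bound deliberately. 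The cruder estimates $\log(1+\varepsilon)>\varepsilon-\varepsilon^2/2$ and $s+R<2s-1$ give about $1.07$ at $s=26$, which is not good enough.

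Next I would check $G(26)<1$ numerically. Here $\varepsilon\approx0.1687$, the first term is $\approx\log(1.040+0.856)\approx0.640$, and the second is $L^2/((s-1)(2+\varepsilon))\approx17.80/54.22\approx0.328$. So $G(26)\approx0.968<1$. As $s\to\infty$ we have $\varepsilon\to0$ and $L\varepsilon=L^2/(s-1)\to0$, so $G(s)\to\log 2<1$. The inequality therefore holds with room to spare for large $s$.

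The main obstacle is making the inequality uniform for all $s\ge26$, not just at the endpoint and at infinity. I would first try to bound $G$ above by a function that is clearly monotone:
\begin{itemize}
\item The map $t\mapsto(1+\log t)^2/t$ is decreasing for $t>e$, so $L\varepsilon=L^2/(s-1)$ decreases in $s$ on our range.
\item Also $\varepsilon=L/(s-1)$ decreases in $s$.
\item Then $\tfrac{1}{1+\varepsilon}\le1-\varepsilon+\varepsilon^2$ and $\tfrac{1}{2+\varepsilon}\le\tfrac{1}{2+\varepsilon(s_0)}$ on each interval $s\ge s_0$.
\end{itemize}
With these, I would split $[26,\infty)$ into a few ranges, for example $[26,60]$, $[60,200]$ and $[200,\infty)$. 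On each range I would bound the two terms of $G$ separately by their worst values, using the monotonicity of $\varepsilon$ and $L\varepsilon$, and confirm that the sum stays below $1$. If this bookkeeping gets messy, the fallback is to show $G'(s)<0$ directly for $s\ge26$ by differentiating in $s$ treated as a real variable. Either way, combining $G<1$ with the monotonicity of $h$ yields $r(0)>R$.
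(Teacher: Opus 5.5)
Your proof is correct and follows the paper's outline. Monotonicity of $h(\cdot,0)$ reduces the claim to $h(R,0)<0$, which is then proved by elementary logarithm inequalities, one numerical check at $s=26$, and a monotonicity argument for larger $s$. Your expression for $h(R,0)$ is also the paper's, since $\frac{s}{s-1}+\frac{1}{1+\varepsilon}=2(1+u)$ with $t=s-1$ and $u=\frac{1}{2t}-\frac{1+\log t}{2(t+1+\log t)}$.

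The difference is in the inequalities. The paper uses only $\log(1+u)<u$ and the quadratic bound $\log(1+\varepsilon)>\varepsilon-\varepsilon^2/2$. These collapse everything into the explicit function $\eta(t)=\log 2-1+u+\frac{(1+\log t)^2}{2t}$. Its derivative $\frac{\log t}{2(t+1+\log t)^2}-\frac{(\log t)^2}{2t^2}$ is plainly negative for $t>e$, so the single value $\eta(25)<-0.003$ covers all $s\ge 26$. In other words, the quadratic bound is enough as long as $s+R$ is kept exact rather than replaced by $2s-1$. Your Pad\'e bound is not needed, but it gives a much larger margin: $1-G(26)\approx 0.032$ against the paper's $0.003$.

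The cost is a less transparent uniformity step, and you should be careful there. The first term $\log\bigl(\frac{s}{s-1}+\frac{1}{1+\varepsilon}\bigr)$ is \emph{increasing} in $s$, because the growth of $\frac{1}{1+\varepsilon}$ outweighs the decay of $\frac{s}{s-1}$. On your range $[26,60]$, bounding these two pieces separately at opposite endpoints (or using $1-\varepsilon+\varepsilon^2$ at $s=60$) gives about $0.673+0.328\approx 1.002$, which fails. Bounding the whole first term by its value $\approx 0.6615$ at $s=60$ gives $\approx 0.990$, which works.

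For $s\ge 60$, note that $\frac{1}{s-1}+\frac{1}{1+\varepsilon}<1$, which is equivalent to $1+\log t<t\log t$. So the first term is below $\log 2$. Meanwhile $\frac{L\varepsilon}{2+\varepsilon}=\frac{L^2}{2(s-1)+L}$ (with your $L$) is decreasing and at most $0.21$ there. Your fallback, $G'<0$ for $s\ge 26$, is also true and can be checked by a derivative estimate like the paper's for $\eta'$. Either route completes the argument.
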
 
{\it Proof}. 
Since $h(r, 0)$ is strictly increasing in $r \in (0, \, s-1)$ and 
$h(r(0), 0) = 0$, it suffices to show $h(R, 0) < 0$. 
Let $L(y)$ be the $C^{\infty}$-function in $y > -1$ such that 
$\log(1+ y) = y - y^2 L(y)$. 
Note that $L(0) = 1/2$, $L'(y) < 0$ and hence $L(y) < 1/2$ for any $y > 0$. 
We put $t := s-1$ for simplicity of notation. 
A careful calculation shows 
\begin{align*} 
h(R, 0) 
&= \log (2 t) + \log \left[ 1 + \frac{1}{2 t} - \frac{1 + \log t}{2(t + 1 + \log t)} \right] 
- t \log \left( 1 + \frac{1 + \log t}{t}\right) \\
& = \log (2 t) + \log \left[ 1 + \frac{1}{2 t} - \frac{1 + \log t}{2(t + 1 + \log t)} \right]  
- t \left(\frac{1 + \log t}{t}\right) + t \left(\frac{1 + \log t}{t}\right)^2 L \left( \frac{1 + \log t}{t}\right) \\
&= \log 2 - 1 + \log \left[ 1 + \frac{1}{2 t} - \frac{1 + \log t}{2(t + 1 + \log t)} \right] + 
\frac{(1 + \log t)^2}{t} L \left( \frac{1 + \log t}{t}\right) \\
&< \log 2 - 1 + \frac{1}{2 t} - \frac{1 + \log t}{2(t + 1 + \log t)} + 
\frac{(1 + \log t)^2}{2 t} =: \eta(t),  
\end{align*}
where the inequalities \eqref{eqn:log} and $L(y) < 1/2$ for $y > 0$ are used in the last line.  
Since  
$$
\eta'(t) = \frac{\log t}{2(t + 1 + \log t)^2} - \frac{(\log t)^2}{2 t^2} 
< \frac{\log t}{2 t^2} - \frac{(\log t)^2}{2 t^2} < 0 
\qquad \mbox{for} \quad t > \re,  
$$
and $\eta(25) < -0.003$, we have $\eta(t) < 0$ for any $t \ge 25$, that is, 
for any $s \ge 26$.  \hfill $\Box$ 
\begin{remark} \label{rem:radial0} 
For later use we also give a lower bound $r(0) >1$, which is much 
weaker than \eqref{eqn:r(0)} but valid for every $s > 3$. 
This follows from the fact that $\psi_s(1) = s+1-(s-1)^{s-1} < 0$ for every 
$s > 3$.  
\end{remark}
\par
The number of real roots of equation \eqref{eqn:algeq}, counted 
with multiplicity, is $2$ when $s$ is even and $3$ when $s$ is odd, respectively. 
If we put $m := \lfloor s/2 \rfloor - 1$, then $s = 2 m+ 2$ when $s$ is even 
and $s = 2 m + 3$ when $s$ is odd. 
In either case equation \eqref{eqn:algeq} has $2 m$ non-real roots, half 
of which are in the upper half-plane, while the rest are their complex 
conjugates, lying in the lower half-plane. 
In order for $m \ge 1$ we suppose $s \ge 4$.      
\begin{proposition} \label{prop:nrr} 
Let $s \ge 4$ and put $m := \lfloor s/2 \rfloor - 1 \ge 1$.  
For each $j = 1, \dots, m$, the equation  
\begin{equation} \label{eqn:Gs}
G_s(\theta) := (\sin \theta) \cdot s^s (-1)^{s-1} \sin^{s-1} [ (s-1) \theta ]  
- (s-1)^{s-1} \sin^s (s \theta) = 0    
\end{equation}
has a unique solution $\theta_j$ in the interval $I_j := 
(2 \pi j/s, \,\, 2 \pi j/(s-1))$. 
The numbers  
\begin{equation} \label{eqn:rj} 
\alpha_j := r_j \re^{\ri \theta_j} \qquad \mbox{with} \quad  
r_j := - \frac{s \sin [(s-1) \theta_j] }{\sin (s \theta_j)}, \quad   
j = 1, \dots, m,   
\end{equation}
provide all roots of equation \eqref{eqn:algeq} in the upper half-plane.  
The numbers $\rho_j := |s + \alpha_j |$ can be expressed as 
\begin{equation} \label{eqn:salpj} 
\rho_j = \frac{s \sin \theta_j}{\sin(s \theta_j)}, \qquad j = 1, \dots, m.  
\end{equation}
Let $R$ be the number defined in \eqref{eqn:r(0)}, $r_0 := r(0)$ be 
the positive root of $\psi_s(z)$ and $\rho_0 := s + r_0$. 
Then we have   
\begin{equation} \label{eqn:order} 
R < r_0 < r_1 < \cdots < r_m < s-1, \qquad \rho_0 > \rho_1 > \cdots > \rho_m > 1.  
\end{equation}
\end{proposition}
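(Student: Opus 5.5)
Write a root in the upper half-plane as $z = r\re^{\ri\theta}$ with $0<\theta<\pi$ and $r>0$. The equation $z^{s-1}(z+s) = (s-1)^{s-1}$ says that $z^{s-1}(z+s)$ is a positive real number. So, setting $z+s = \rho\,\re^{\ri\phi}$, we need $(s-1)\theta + \phi \in 2\pi\bZ$ together with the radial equation $r^{s-1}\rho = (s-1)^{s-1}$.

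\emph{Step 1: the triangle relations.} The points $0$, $-s$ and $z$ form a triangle. The law of sines, applied to the angles at $0$ (namely $\pi-\theta$), at $-s$ (namely $\phi$) and at $z$ (namely $\theta-\phi$), gives
\[
\frac{s}{\sin(\theta-\phi)} = \frac{r}{\sin\phi} = \frac{\rho}{\sin\theta}.
\]
Substitute $\phi = 2\pi j - (s-1)\theta$. Then $\sin\phi = -\sin[(s-1)\theta]$ and $\sin(\theta-\phi) = \sin(s\theta)$, which yields
\[
r = -\frac{s\sin[(s-1)\theta]}{\sin(s\theta)}, \qquad \rho = \frac{s\sin\theta}{\sin(s\theta)},
\]
that is, \eqref{eqn:rj} and \eqref{eqn:salpj}. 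Inserting these into $r^{s-1}\rho=(s-1)^{s-1}$ and clearing the denominator $\sin^s(s\theta)$ gives exactly $G_s(\theta)=0$. Conversely, suppose $\theta$ satisfies $G_s(\theta)=0$ and the formula for $r$ gives $r>0$ and $\rho>0$. Then the triangle with these side lengths closes with the correct angles, so $z=r\re^{\ri\theta}$ is a root. Positivity requires $\sin(s\theta)>0$ and $\sin[(s-1)\theta]<0$. Since $0<\phi<\pi-\theta$ for a point in the upper half-plane, $\theta$ must lie in $I_j = (2\pi j/s,\,2\pi j/(s-1))$ for some $j$. I still need to check that $I_j\subset(0,\pi)$, which holds exactly for $j\le m$, and that on $I_j$ both sign conditions hold: there $s\theta\in(2\pi j,\,2\pi j + 2\pi j/(s-1))$ with $2\pi j/(s-1)<\pi$, and similarly for $(s-1)\theta$.

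\emph{Step 2: existence and uniqueness in each $I_j$.} On $I_j$ consider
\[
k(\theta) := \log\rho + (s-1)\log r - (s-1)\log(s-1), \qquad r,\ \rho \ \text{given by the formulas above}.
\]
At the left endpoint $\sin(s\theta)\to 0^+$, so $\rho,r\to+\infty$ and $k\to+\infty$. At the right endpoint $r\to 0^+$ while $\rho$ stays bounded, so $k\to-\infty$. Hence a root $\theta_j\in I_j$ exists. For uniqueness the cleanest route is counting. Step 1 shows that every upper-half-plane root gives a zero of $G_s$ in some $I_j$. The polynomial $\psi_s$ has exactly $m$ non-real roots in the upper half-plane (Lemma~\ref{lem:realr}), and different roots have different arguments $\theta$ by Lemma~\ref{lem:radial}. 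Since each of the $m$ intervals contains at least one root, each contains exactly one, and these are all the upper-half-plane roots.

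\emph{Step 3: the ordering \eqref{eqn:order}.} By Lemma~\ref{lem:radial}, every root with argument $\theta\in[0,\pi)$ has modulus $r(\theta)$, and $r(\cdot)$ is strictly increasing with values in $(0,\,s-1)$. The intervals $I_j$ are disjoint and increasing in $j$, so $\theta_1<\cdots<\theta_m$ and hence $r_0<r_1<\cdots<r_m<s-1$. The relation $\rho = (s-1)^{s-1}/r^{s-1}$ reverses this order, giving $\rho_0>\rho_1>\cdots>\rho_m$. Moreover $\rho_m>1$, because $\rho_m = |s+\alpha_m| > s-r_m > 1$, as already shown in the proof of Lemma~\ref{lem:radial}.

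\emph{Remaining inequality and main obstacle.} The bound $R<r_0$ is Lemma~\ref{lem:radial0}, but that lemma is only stated for $s\ge 26$. For $4\le s\le 25$ I would verify $h(R,0)<0$ by the same function $\eta(t)$ or by direct numerics; alternatively, the statement may implicitly assume $s\ge26$ there. I expect the main care point to be the sign bookkeeping in Step 1, namely showing that $\phi$ lies in $(0,\pi-\theta)$ precisely when $\theta\in I_j$ for some $j\le m$.
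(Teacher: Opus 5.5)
Your argument follows the paper's proof, apart from one point. You obtain the formulas for $r$ and $\rho$ by the law of sines, where the paper splits $\mathrm{e}^{-\mathrm{i}s\theta}\psi_s(\alpha)=0$ into real and imaginary parts. You get a sign change on each $I_j$ via $k(\theta)$, which is the logarithm of the radial equation and equivalent to the paper's $G_s(2\pi j/s)>0>G_s(2\pi j/(s-1))$. Uniqueness comes from counting the $m$ upper-half-plane roots, and the ordering from Lemma~\ref{lem:radial}. Three small repairs are needed:
\begin{enumerate}
\item The triangle condition is $0<\phi<\theta$, not $0<\phi<\pi-\theta$, because the angle at $z$ is $\theta-\phi$. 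Only $0<2\pi j-(s-1)\theta<\theta$ yields $I_j$.
\item For odd $s$ the interval $I_{m+1}=((s-1)\pi/s,\,\pi)$ also lies in $(0,\pi)$ and satisfies both sign conditions, so ``exactly for $j\le m$'' is off by one. Your counting argument still shows that $G_s$ has no zero there, which is how the paper handles it.
\item In Step~3, the identification $|\alpha_j|=r(\theta_j)$ needs $|\alpha_j|<s-1$ from Proposition~\ref{prop:s-1}, since Lemma~\ref{lem:radial} gives uniqueness only in $(0,s-1)$.
\end{enumerate}

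The real problem is $R<r_0$. Your plan to verify $h(R,0)<0$ for $4\le s\le 25$, by the $\eta$ bound or by numerics, cannot succeed, because the inequality is false for small $s$. For $s=4$ one has $R=3/\{1+(1+\log 3)/3\}\approx 1.765$. On the other hand, $\psi_4(1.7)=1.7^3\cdot 5.7-27>0$, so $r_0<1.7<R$. Numerically $h(R,0)>0$ persists through $s=18$ (about $0.005$ there), and the $\eta$ bound is already positive at $t=24$. So this part of \eqref{eqn:order} holds only for large $s$. The correct reading is your alternative: restrict $R<r_0$ to $s\ge 26$, where Lemma~\ref{lem:radial0} proves it. The paper's own proof has the same dependence, since monotonicity of $r(\theta)$ says nothing about $R$, and it uses $R<r_0$ only for $s\ge 26$ in Lemma~\ref{lem:logk}. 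All the other inequalities in \eqref{eqn:order} hold for every $s\ge 4$ by your argument.
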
 
{\it Proof}. 
Let $\alpha = r \re^{\ri \theta}$ be a root of equation \eqref{eqn:algeq} with 
$r > 0$ and $0 < \theta < \pi$ in polar expression.   
The real and imaginary parts of the equation  
$\re^{- \ri s \theta} \psi_s(\alpha) = r^{s-1} (r + s \re^{- \ri \theta}) 
- \re^{- \ri s \theta} (s-1)^{s-1} = 0$ read  
$$
r^{s-1} (s \cos \theta + r) = (s-1)^{s-1} \cos(s \theta),  \qquad 
r^{s-1} s \sin \theta = (s-1)^{s-1} \sin(s \theta),   
$$
respectively. 
The second of these equations implies $\sin(s \theta) > 0$ and 
\begin{equation} \label{eqn:salp}
|s + \alpha| = \frac{(s-1)^{s-1}}{r^{s-1}} = \frac{s \sin \theta}{\sin(s \theta)}. 
\end{equation} 
Solving the first equation for $r$ and using formula \eqref{eqn:salp} and 
the addition formula for the sine function, we have     
\begin{equation} \label{eqn:r}
r = \frac{(s-1)^{s-1}}{r^{s-1}} \cos(s \theta) - s \cos \theta 
= - \frac{ s \sin [ (s-1) \theta] }{\sin (s \theta)}, 
\end{equation}
which in particular implies $\sin [ (s-1) \theta ] < 0$. 
Substituting formula \eqref{eqn:r} into equation \eqref{eqn:salp}, we see that 
$\theta$ is a solution to equation \eqref{eqn:Gs}. 
Sign change of the sine function implies that $\theta \in (0, \, \pi)$ satisfies 
$\sin (s \theta) > 0$ and $\sin [ (s-1) \theta] < 0$, if and only if 
$\theta \in I' := \bigcup_{j=1}^{m'} I_j$ with $m' := \lfloor (s-1)/2 \rfloor$.  
We remark that $I_j$, $j = 1, \dots, m'$, are mutually disjoint, strictly increasing  
subintervals of $(0, \, \pi)$. 
Conversely, if $\theta \in I'$ is a solution to equation \eqref{eqn:Gs}, 
then $\alpha := r \re^{\ri \theta}$ with $r$ defined by formula \eqref{eqn:r} 
gives a root of equation \eqref{eqn:algeq} in the upper half-plane. 
Equation \eqref{eqn:Gs} may be thought of as the angular component 
of equation \eqref{eqn:algeq}, 
while \eqref{eqn:abs} is its radial component.     
\par
Notice that $m = m'$ if $s$ is even, while $m = m'-1$ if $s$ is odd, and 
that $I := \bigcup_{j = 1}^m I_j$ is relatively compact in the interval $(0, \, \pi)$.  
For each $j = 1, \dots, m$, at the ends of the interval $I_j$ we have          
$$
G_s(2 \pi j/s ) = s^s \sin^s( 2 \pi j/s) > 0 > 
G_s(2 \pi j/(s-1)) = - (s-1)^{s-1} \sin^s [2 \pi j/(s-1)].    
$$
Thus for each $j = 1, \dots, m$ the equation \eqref{eqn:Gs} has at least one solution 
$\theta_j \in I_j$. 
These $\theta_j$'s give rise to distinct $m$ and hence all roots $\alpha_1, \dots, \alpha_m$ 
of equation \eqref{eqn:algeq} in the upper half-plane via formula \eqref{eqn:rj}. 
In particular, $\theta_j$ is the unique solution to equation \eqref{eqn:Gs} in $I_j$. 
For $j = 1, \dots, m$, we have $0 < r_j < s-1$ by Proposition \ref{prop:s-1}
and hence $r_j = r(\theta_j)$ by Lemma \ref{lem:radial}. 
Since $r(\theta)$ is strictly increasing in $\theta$, we have 
$R < r_0 < r_1 < \cdots < r_m < s-1$, which together with $\rho_j r_j^{s-1} = (s-1)^{s-1}$ 
yields  $\rho_0 > \rho_1 > \cdots > \rho_m > 1$ in formula \eqref{eqn:salpj}. 
This shows \eqref{eqn:order}. \hfill $\Box$
\begin{lemma} \label{lem:afms} 
In the situation of Proposition $\ref{prop:nrr}$ the number $\rho_m$ 
admits the following lower bound:    
\begin{enumerate}
\setlength{\itemsep}{-1pt}
\item $\rho_m > s \sin [\pi/(s-1)] > \pi$ when $s \ge 4$ is even,   
\item $\rho_m > s \sin [2 \pi/(s-1)]$ when $s \ge 7$ is odd, while $\rho_m > 5 \sin (2 \pi/5)$ when $s = 5$.   
\end{enumerate}
\end{lemma}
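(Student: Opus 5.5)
The plan is to use formula \eqref{eqn:salpj}, $\rho_m = s\sin\theta_m/\sin(s\theta_m)$ with $\theta_m \in I_m = (2\pi m/s,\, 2\pi m/(s-1))$, together with the crude bound $\sin(s\theta_m) \le 1$. This gives $\rho_m \ge s \sin\theta_m$, with strict inequality unless $\sin(s\theta_m)=1$, and we shall treat that equality case separately. It then suffices to bound $\sin\theta_m$ from below using the location of $I_m$ inside $(0,\pi)$. Since $\sin$ is concave on $[0,\pi]$, its minimum over an interval is attained at an endpoint. So it is enough to bound $\sin$ at $2\pi m/s$ and at $2\pi m/(s-1)$.

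\emph{Even case.} Here $s = 2m+2$. Then $2\pi m/s = \pi - 2\pi/s$ and $2\pi m/(s-1) = \pi - \pi/(s-1)$. Hence $\theta_m \in (\pi - 2\pi/s,\, \pi - \pi/(s-1))$, so $\pi - \theta_m \in (\pi/(s-1),\, 2\pi/s)$. Both endpoints lie in $(0,\pi/2]$ when $s\ge4$, and there $\sin$ is increasing. Therefore $\sin\theta_m = \sin(\pi-\theta_m) > \sin[\pi/(s-1)]$, which gives $\rho_m > s\sin[\pi/(s-1)]$.

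For the second inequality, $s\sin[\pi/(s-1)] > \pi$, put $t = \pi/(s-1)$. The claim becomes $\sin t/t > (s-1)/s$. Using $\sin t > t - t^3/6$, it suffices that $t^2/6 < 1/s$, i.e.\ $\pi^2 s < 6(s-1)^2$. This holds for $s\ge4$, since $\pi^2\cdot 4 \approx 39.5 < 54$ and the right-hand side grows faster.

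\emph{Odd case.} Here $s = 2m+3$. Then $2\pi m/s = \pi - 3\pi/s$ and $2\pi m/(s-1) = \pi - 2\pi/(s-1)$. So $\pi-\theta_m \in (2\pi/(s-1),\, 3\pi/s)$. For $s\ge7$ we have $3\pi/s \le 3\pi/7 < \pi/2$, so $\sin$ is increasing on this interval and $\sin\theta_m > \sin[2\pi/(s-1)]$. This gives the stated bound.

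For $s=5$ we have $m=1$ and $\pi-\theta_1 \in (\pi/2,\, 3\pi/5)$. Now $\sin$ is decreasing there, so $\sin\theta_1 > \sin(3\pi/5) = \sin(2\pi/5)$, which yields $\rho_1 > 5\sin(2\pi/5)$.

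\emph{Equality case.} Strictness only needs care when $\sin(s\theta_m)=1$ exactly. The endpoint inequalities on $\sin\theta_m$ are already strict because $\theta_m$ lies in the open interval $I_m$. Combined with $\sin(s\theta_m)\le 1$, this gives strict inequality throughout.

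The main obstacle is only bookkeeping: making sure the arcs $\pi-\theta_m$ land where $\sin$ is monotone. This is what forces the separate treatment of $s=5$, where the interval crosses $\pi/2$ and the bound $s\sin[2\pi/(s-1)]$ would be $5\sin(\pi/2)=5$, which fails. Verifying the elementary inequality $s\sin[\pi/(s-1)]>\pi$ for all even $s\ge4$ is routine.
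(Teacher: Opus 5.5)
Your argument is correct and is essentially the paper's own proof: use $\sin(s\theta_m)\le 1$ to get $\rho_m\ge s\sin\theta_m$, then use $\theta_m\in I_m$ and monotonicity of $\sin$ (you phrase it via $\pi-\theta_m$), treating $s=5$ separately because $I_1=(2\pi/5,\pi/2)$ lies on the increasing side of $\sin$; your Taylor-bound check of $s\sin[\pi/(s-1)]>\pi$ simply fills in the paper's ``a little calculus.'' One small correction to your closing remark: for $s=5$ it is only the monotonicity argument that cannot give the bound $5$ (numerically $\rho_1\approx 5.87>5$), so the inequality $\rho_1>5$ itself does not fail.
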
 
{\it Proof}. 
Formula \eqref{eqn:salpj} in Proposition \ref{prop:nrr} implies 
$\rho_m \ge s \sin \theta_m$, where $2m \pi/s < \theta_m < 2 m \pi/(s-1)$. 
For $s = 2 m + 2$ with $m \ge 1$, since $\pi/2 \le m \pi/(m+1) 
< \theta_m < 2 m \pi/(2 m + 1) < \pi$, we have $\sin \theta_m > 
\sin [2m \pi/(2m+1)] = \sin [\pi/(2 m+1)] = \sin [\pi/(s-1)]$ and hence 
$\rho_m \ge s \sin \theta_m > s \sin [ \pi/(s-1)]$. 
A little calculus shows $s \sin [ \pi/(s-1)] > \pi$ for $s \ge 4$. 
For $s = 2 m + 3$ with $m \ge 2$, since $\pi/2 \le 2 m \pi/(2 m+3) 
< \theta_m < m \pi/(m + 1) < \pi$, we have 
$\sin \theta_m > \sin[m \pi/(m+1)] = \sin [\pi/(m+1)] = \sin [2 \pi/(s-1)]$ and hence 
$\rho_m \ge s \sin \theta_m > s \sin [2 \pi/(s-1)]$. 
For $s = 5$, since $2 \pi/5 < \theta_1 < \pi/2$, we have 
$\rho_1 \ge 5 \sin \theta_1 > 5 \sin(2 \pi/5)$.   
\hfill $\Box$ 
\subsection{Arithmetic} \label{ss:arith}
Let $\alpha$ be any nontrivial root of $\psi_s(z)$ and $d := \deg \alpha$ 
its degree as an algebraic number.  
Put 
$$
M = M(\alpha) := (s + \beta_1) \cdots (s + \beta_d), \qquad 
N = N(\alpha) := \beta_1 \cdots \beta_d, 
$$
where $B = \{\beta_1, \dots, \beta_d \}$ is the set of all conjugates to $\alpha$ 
including itself.  
Since $\alpha$ is an algebraic integer, $M$ and $N$ are rational integers. 
For $s = 2$ we have $d = 2$ and $M = N = -1$. 
\begin{lemma}  \label{lem:MN}
For each integer $s \ge 3$ there exists an integer $n = n(\alpha)$ such that 
$$
|n| \ge 2, \qquad  n \, | \, (s-1), \qquad M = n^{s-1}, \qquad n N = (s-1)^d. 
$$ 
\end{lemma}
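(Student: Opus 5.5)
The plan is to replace $\alpha$ by the auxiliary algebraic integer $\gamma := (s-1)/\alpha$. The relation $\psi_s(\alpha)=0$ says that $s+\alpha$ is an exact $(s-1)$-st power in $\bQ(\alpha)$, namely
$$
s+\alpha = \Big(\frac{s-1}{\alpha}\Big)^{s-1}=\gamma^{s-1}.
$$
Taking norms from $\bQ(\alpha)$ to $\bQ$ then expresses $M$ and $N$ through the single number $n:=\mathrm{N}(\gamma)=\prod_{i=1}^d (s-1)/\beta_i$. Here $\alpha\neq0$ because $\psi_s(0)\neq0$, and $\deg\gamma=d$ because $\bQ(\gamma)=\bQ(\alpha)$. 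The conjugates of $\gamma$ are the numbers $(s-1)/\beta_i$.

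\emph{Step 1 (the two identities).} Each conjugate satisfies $\beta_i^{s-1}(\beta_i+s)=(s-1)^{s-1}$. Hence $s+\beta_i=((s-1)/\beta_i)^{s-1}$, and multiplying over $i$ gives $M=n^{s-1}$. The definition of $n$ gives $nN=(s-1)^d$ directly.

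\emph{Step 2 ($n\in\bZ$ and $n\mid s-1$).} Substitute $z=(s-1)/\gamma$ into $\psi_s(z)=0$ and clear denominators; this yields $\gamma^s-s\gamma-(s-1)=0$. So $\gamma$ is a root of a monic integer polynomial, hence an algebraic integer, and $n$ is a rational integer. By Gauss's lemma the monic minimal polynomial $g\in\bZ[z]$ of $\gamma$ divides $z^s-sz-(s-1)$ in $\bZ[z]$. Comparing constant terms, $g(0)=\pm n$ divides $s-1$.

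\emph{Step 3 ($|n|\ge2$).} This is the main obstacle. Since $|M|=|n|^{s-1}$, it suffices to show $|M|>1$, using the location of roots from \S\ref{ss:rcr}--\ref{ss:rae}. The trivial root $1-s$ is rational, so every conjugate $\beta_i$ of a nontrivial root is nontrivial.
\begin{enumerate}
\item If $s$ is odd, Proposition \ref{prop:nrr} (for $s\ge5$) or a direct check of the positive root (for $s=3$) gives $|s+\beta_i|>1$ for every nontrivial root. Hence $|M|>1$.
\item If $s$ is even, the only root with $|s+\beta|\le1$ is the negative root $-s-c(s)$, which has $|s+\beta|=c(s)\in(c_0,1)$ by Lemma \ref{lem:c(s)}. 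If it is not among the $\beta_i$, again $|M|>1$. If it is, then $d\ge2$, because $s+\beta=-c(s)\in(-1,0)$ cannot be an integer. Then some other conjugate occurs:
\begin{enumerate}
\item either the positive root, with $s+r_0>s\ge4$, giving $|M|>4c_0>1$;
\item or a non-real root, whose complex conjugate also occurs. By Lemma \ref{lem:afms} and \eqref{eqn:order} each of these has $\rho_j\ge\rho_m>\pi$, so $|M|>c_0\pi^2>1$.
\end{enumerate}
\end{enumerate}
In all cases $|n|^{s-1}=|M|>1$, so $|n|\ge2$, which completes the plan.
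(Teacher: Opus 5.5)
Your proof is correct. Steps 1 and 3 follow the paper's proof closely. Step 2 takes a different route.

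\textbf{Steps 1 and 3.} The paper also sets $n=(s-1)^d/N$ and obtains $M=n^{s-1}$ by multiplying the relations $\beta_j^{s-1}(s+\beta_j)=(s-1)^{s-1}$. It then shows $|M|>1$ from the root locations in Lemma~\ref{lem:c(s)}, Proposition~\ref{prop:nrr} and Lemma~\ref{lem:afms}. For even $s$ the paper splits into $d=2$ versus $d\ge3$, using $s+r_0>5$ from Remark~\ref{rem:radial0}. You split instead on whether the positive root or a non-real root appears among the other conjugates. Your bounds $4c_0>1$ and $c_0\pi^2>1$ do the same job.

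\textbf{Step 2.} The paper works with $\alpha$ directly:
\begin{itemize}
\item it gets $n\in\bZ$ because $n\in\bQ$ and $n^{s-1}=M\in\bZ$;
\item it gets $n\mid s-1$ by evaluating the minimal polynomial $P$ of $\alpha$ at $-s$, so that $P(-s)=(-1)^dM=\pm n^{s-1}$ divides $\psi_s(-s)=-(s-1)^{s-1}$, and then passes from $n^{s-1}\mid(s-1)^{s-1}$ to $n\mid s-1$.
\end{itemize}
You instead pass to $\gamma=(s-1)/\alpha$, which is a root of the monic trinomial $z^s-sz-(s-1)$. Both integrality of $n$ and $n\mid s-1$ then come from the constant term of the minimal polynomial of $\gamma$.

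\textbf{What each buys.} Your version identifies $n$ as the norm of the algebraic integer $\gamma$ and skips the step from $n^{s-1}\mid(s-1)^{s-1}$ to $n\mid s-1$. The paper's version needs no auxiliary number. The two are about equally short.
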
 
{\it Proof}. 
For $s = 3$ we have $d = 1$, $M = 4$, $N = 1$ and $n = 2$, so the 
lemma holds true. 
Hereafter we assume $s \ge 4$. 
\par
Since $\beta_1, \dots, \beta_d$ are roots of $\psi_s(z)$, we have 
$\beta_j^{s-1} (s + \beta_j) = (s-1)^{s-1}$ for $j = 1, \dots, d$. 
Product of them gives $N^{s-1} M = (s-1)^{d(s-1)}$, namely,  
$M = \{ (s-1)^d / N \}^{s-1}$. 
Since $M$ is an integer, $n := (s-1)^d/ N$ must also be an integer, 
$M = n^{s-1}$ and $n N = (s-1)^d$. 
Let $P(z)$ be the minimal polynomial of $\alpha$.  
Since $\alpha$ is an algebraic integer, $P(z) \in \bZ[z]$ is 
a monic polynomial dividing $\psi_s(z)$ in $\bZ[z]$, so the integer $P(-s)$ 
divides $\psi_s(-s) = - (s-1)^{s-1}$ in $\bZ$. 
As $P(z) = (z-\beta_1) \cdots (z-\beta_d)$, we have  
$P(-s) = (-1)^d \cdot M$, so $M = n^{s-1}$ divides $(s-1)^{s-1}$ 
and hence $n$ divides $s-1$ in $\bZ$. 
It only remains to show $|n| \ge 2$, that is, $|n| > 1$.  
\par
Unless $s$ is even and $B$ contains the negative root 
$\beta(s) := -s -c(s)$ with $c(s)$ being given in Lemma \ref{lem:c(s)}, 
inequality \eqref{eqn:order} in Proposition \ref{prop:nrr} implies 
$|s + \beta_j| > 1$ for $j = 1, \dots, d$, and hence 
$|n|^{s-1} = |M| = |s + \beta_1| \cdots |s + \beta_d| > 1$, which yields $|n| > 1$.     
Suppose now that $s$ is even and $\beta(s) \in B$.   
We may put $\beta_1 = \beta(s)$. 
From Lemma \ref{lem:c(s)} we have $c_0 < |s + \beta_1| < 1$ and 
hence $d \ge 2$, as the algebraic integer $\beta_1$ is not a rational integer.   
If $d \ge 3$, then assertion (1) of Lemma \ref{lem:afms} and 
inequality \eqref{eqn:order} imply 
$$
|n|^{s-1} = |M| = |s + \beta_1| |s + \beta_2| \cdots |s + \beta_d| 
> c_0 \cdot \pi^{d-1} \ge c_0 \cdot \pi^2 > 2.74 > 1, 
$$
and hence $|n| > 1$.  
Finally suppose $d = 2$ and $B = \{ \beta_1, \beta_2\}$.  
Then $\beta_1 = \beta(s)$ is a real quadratic number, so its conjugate $\beta_2$ 
must be the positive root of $\psi_s(z)$ by Lemma \ref{lem:realr}. 
As $s \ge 4$, we have $|s + \beta_2| = s + \beta_2 > 4 + 1 = 5$ 
by Remark \ref{rem:radial0}.  
Therefore $|n|^{s-1} = |M| = |s + \beta_1| |s + \beta_2| > c_0 \cdot 5 > 1.39$, 
and so $|n| > 1$.  \hfill $\Box$ 
\begin{lemma} \label{lem:logk}
In the situation of Lemma $\ref{lem:MN}$, for any integer $s \ge 26$, the degree 
$d$ of $\alpha$ has a lower bound   
\begin{equation} \label{eqn:logk}
d > \frac{(\log |n|)   (s -1)}{ 1+ \log (s-1)} \ge 
\delta(s) := \frac{(\log p_s)   (s -1)}{ 1+ \log (s-1)}.  
\end{equation}
\end{lemma}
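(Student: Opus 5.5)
The plan is to bound $|M| = |n|^{s-1}$ from above using the radial information of Proposition \ref{prop:nrr}, and then compare with the exponent. By Lemma \ref{lem:MN} we have $|n|^{s-1} = |M| = \prod_{j=1}^d |s+\beta_j|$. Taking logarithms gives $(s-1)\log|n| = \sum_{j=1}^d \log|s+\beta_j|$, so the task is to show that each factor satisfies $\log|s+\beta_j| < 1+\log(s-1)$, with at least one factor giving strict slack. Summing then yields $(s-1)\log|n| < d\,(1+\log(s-1))$, which is the first inequality in \eqref{eqn:logk}. The second inequality is immediate: $|n|\ge 2$ and $n \mid (s-1)$ force $|n|$ to have a prime factor dividing $s-1$, so $|n| \ge p_s$.

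For the per-factor bound I use the radial equation \eqref{eqn:abs}: every root $\beta$ satisfies $|s+\beta| = (s-1)^{s-1}/|\beta|^{s-1}$. By \eqref{eqn:order}, every root other than the negative real one has $|\beta| \ge r_0 > R$, where $R$ is as in \eqref{eqn:r(0)}; this is where $s\ge 26$ (Lemma \ref{lem:radial0}) enters. The complex ones have $|\beta| = r_j > r_0$, and the positive one has $|\beta| = r_0$. Hence
\[
|s+\beta| < \Bigl(\frac{s-1}{R}\Bigr)^{s-1} = \Bigl(1+\frac{1+\log(s-1)}{s-1}\Bigr)^{s-1} < \re^{1+\log(s-1)} = \re\,(s-1),
\]
using $(1+y/t)^t < \re^{y}$, which follows from \eqref{eqn:log}. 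This gives $\log|s+\beta| < 1+\log(s-1)$ for all these roots.

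It remains to handle the negative real root. If $s$ is odd, it is the trivial root $1-s$, which is excluded from $B$ because $\alpha$ is nontrivial and its conjugates are roots of the same irreducible factor of $\psi_s$. In that case the double root $1-s$ is rational and lies in a separate factor. If $s$ is even, the negative root $-s-c(s)$ has $|s+\beta| = c(s) < 1 < \re(s-1)$ by Lemma \ref{lem:c(s)}, so its logarithm is negative and the bound holds trivially.

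The point to watch is that the inequalities are strict and that $B$ contains only the roots just treated, which is guaranteed by Proposition \ref{prop:s-1} and Lemma \ref{lem:realr}. Once each term is strictly bounded, summing over the $d\ge 1$ conjugates gives $(s-1)\log|n| < d\,(1+\log(s-1))$, and dividing completes the proof.
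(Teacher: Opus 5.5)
Your proof is correct and is essentially the paper's argument. The paper bounds $|N|=\prod_j|\beta_j|>R^d$ directly and uses $|n|=(s-1)^d/|N|$, which is the same as your factor-by-factor bound on $|M|=|n|^{s-1}$ once the radial identity $|s+\beta_j|=(s-1)^{s-1}/|\beta_j|^{s-1}$ is applied; both rest on \eqref{eqn:order}, Lemma \ref{lem:radial0} and \eqref{eqn:log}. Your separate case for the negative root is harmless but unnecessary, since $|{-s-c(s)}|=s+c(s)>R$ as well.
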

{\it Proof}. 
By inequality \eqref{eqn:order} in Proposition \ref{prop:nrr} we have 
$|\beta_j| > R$ for $j = 1, \dots, d$, so $|N| = |\beta_1 \cdots \beta_d| > R^d$ 
and 
$$
|n| = \frac{(s-1)^d}{|N|} < \frac{(s-1)^d}{R^d} = 
\left(1 + \frac{1 + \log (s-1)}{s - 1} \right)^d,   
$$
where $R$ is given in \eqref{eqn:r(0)}.  
Taking the logarithm of this estimate and using inequality \eqref{eqn:log}, we have 
$$
\log |n| < d \cdot \log \left(1 + \frac{1 + \log (s-1)}{s-1} \right) < 
d \cdot \frac{1 + \log (s-1)}{s-1}, 
$$
which together with $|n| \ge p_s$ establishes estimate \eqref{eqn:logk}. 
\hfill $\Box$ \par\medskip
{\it Proofs of Theorems $\ref{thm:deg}$ and $\ref{thm:degree}$}.  
For any $s \ge 26$ estimate \eqref{eqn:deg} follows from \eqref{eqn:logk}. 
It is easy to see that $s-2 \ge \delta(s)$ for any $s \ge 2$. 
As for $s = 2, \dots, 25$, each individual check shows that 
$\psi_s(z)$ is irreducible if $s$ is even, while the quotient of 
$\psi_s(z)$ by $ (z+s-1)^2$ is irreducible if $s$ is odd.  
In either case we have $\deg \alpha \ge s-2 \ge \delta(s)$. 
Thus estimate \eqref{eqn:deg} also holds for $s = 2, \dots, 25$, 
and Theorem \ref{thm:deg} is established.  
By Lemma \ref{lem:algequ} we have $\deg x = \deg y$. 
Theorem \ref{thm:degree} is then obtained by applying 
Theorem \ref{thm:deg} to the nontrivial root $\alpha = y$.   
\hfill $\Box$ 
\subsection{Rational and Quadratic Arguments} \label{ss:swra} 
Theorem \ref{thm:rqa} is a simple consequence of Theorems \ref{thm:a-x} 
and \ref{thm:degree}.   
We restate it in the following proposition.   
\begin{proposition} \label{prop:rqa} 
Let $\lambda = (p, 0, r; a, 1/2; x) \in \cI$ be any non-elementary solution.  
\begin{enumerate}
\setlength{\itemsep}{-1pt}
\item If $x$ is a rational number, then $\lambda$ is a multiple of 
solution \eqref{eqn:sol-r1} or solution \eqref{eqn:sol-r2} in 
Example $\ref{ex:rat-x}$.   
\item If $x$ is a quadratic irrational, then $\lambda$ is a multiple of 
solution \eqref{eqn:sol-ir} in Example $\ref{ex:irr-x}$. 
\end{enumerate}
\end{proposition}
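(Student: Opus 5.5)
The plan is to combine Theorem \ref{thm:a-x} with Theorem \ref{thm:degree}. This reduces both assertions to a finite check on the ratio $s=r/p$, followed by an identification of $(a,a')$ and $p$.

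By Theorem \ref{thm:a-x}, $s=r/p$ is an integer with $s\ge 2$ and $p(s-1)\equiv 0 \bmod 2$. Moreover $x$ is the unique root in $(0,1)$ of $\varphi_s$, and $a=j/s$ with $j\in\{0,\dots,s-2\}$. We then split according to $\deg x$.

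First suppose $\deg x=1$. By Theorem \ref{thm:degree}, $s=2$ would force $\deg x=2$, so $s\ge3$ and we need $\delta(s)<1$. Using Remark \ref{rem:deg}, we check directly that $\delta_0(s)\ge 1$ for even $s\ge4$ and $\delta_1(s)\ge1$ for odd $s\ge5$. The first claim holds since $\delta_0(4)=3\log3/(1+\log3)>1$, and the second since $\delta_1(5)=4\log 2/(1+\log 4)>1$; monotonicity of $\delta_0,\delta_1$ then covers all larger $s$. Hence $s=3$. Then $\varphi_3(z)=4z^3-27(1-z)^2$, and its root in $(0,1)$ is $x=3/4$ (indeed $4\cdot 27/64=27/16$). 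The parity condition $2p\equiv0$ is automatic, and $j\in\{0,1\}$ gives $a\in\{0,1/3\}$. Thus $\lambda=p\cdot(1,0,3;a,1/2;3/4)$ is the $p$-multiple of \eqref{eqn:sol-r1} or \eqref{eqn:sol-r2}. Since these two data are genuine solutions (Example \ref{ex:rat-x}), they are the only primitive solutions with rational $x$.

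Next suppose $\deg x=2$. If $s\ge3$ we need $\delta(s)<2$. For $s=3$ we have $\deg x=1$, which is excluded. For the remaining $s$ the bound fails beyond a small range: $\delta_0(6)=5\log3/(1+\log5)\approx2.09$, and $\delta_1(s)$ first exceeds $2$ around $s=9$. So finitely many $s\in\{4,5,7,\dots\}$ remain to be excluded. For these I would invoke the individual irreducibility checks quoted in the proof of Theorem \ref{thm:deg}, which give $\deg x\ge s-2\ge 2$ with equality only if $s=4$. For $s=4$, $\psi_4$ is irreducible of degree $4$, so $\deg x=4$. Alternatively, the sharper bound from Lemma \ref{lem:MN} handles this case. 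Hence $s=2$, so $j=j'=0$, $a=0$, and $p$ is even. Then $\varphi_2(z)=z^2-4(1-z)=0$ gives $x=2\sqrt2-2$. Therefore $\lambda=(p/2)\cdot(2,0,4;0,1/2;2\sqrt2-2)$, which is a multiple of \eqref{eqn:sol-ir}; that datum is itself a solution by Example \ref{ex:irr-x}.

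The main obstacle is the bookkeeping for small $s$, where the bound $\delta(s)$ is too weak on its own. It must be supplemented by the case-by-case irreducibility of $\psi_s$ (or $\psi_s/(z+s-1)^2$) for $s\le 25$ cited in the proof of Theorem \ref{thm:deg}, which immediately yields $\deg x\ge s-2$ and thus rules out every $s\ge5$ in the quadratic case and every $s\ge4$ in the rational case.
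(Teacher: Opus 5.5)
Your proposal is correct and follows the paper's route: Theorem \ref{thm:degree} with the bounds of Remark \ref{rem:deg} reduces to finitely many $s$; the individual irreducibility checks leave $s=3$ in the rational case and $s=2$ in the quadratic case (in the rational case you rightly note they are not needed, since $\delta(s)\ge 1$ already excludes every $s\ge 4$); and Theorem \ref{thm:a-x} then fixes $a$, the parity of $p$, and $x$. One numerical slip, harmless because your irreducibility checks cover all $s\le 25$: $\delta_1(9)\approx 1.80$, so $\delta_1$ first exceeds $2$ at $s=11$ rather than near $s=9$, and the leftover set in the quadratic case is $\{4,5,7,9\}$, exactly as in the paper.
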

{\it Proof}. 
 We apply Theorem \ref{thm:degree}, or rather Remark \ref{rem:deg}, to the root 
$x$ of equation \eqref{eqn:alg-eq}.   
For the functions in \eqref{eqn:delta} we observe $\delta_0(6) > 2.1$ and 
$\delta_1(11) > 2.09$. 
Thus if $\deg x \le 2$ then $s = r/p \in \{2, 4 \} \cup \{ 3, 5, 7, 9 \}$. 
Among these candidates we have $\deg x = 1$ (rational case) 
only when $s = 3$ and $\deg x = 2$ (quadratic case) only when $s = 2$. 
\par
Assertion (1). 
Suppose that $x$ is a rational number. 
Then $s = r/p = 3$, $y = 1$ and $x = 3/4$. 
Formula \eqref{eqn:saap} in Theorem \ref{thm:a-x} implies 
$j + j' = 1$ and hence either $(j, j') = (0, 1)$ or $(j, j') = (1, 0)$.   
In the former case we have $a = 0$ and $\lambda$ must be the $p$-multiple 
of $\lambda_1 = (1, \, 0, \,  3; \, 0, \, 1/2; \, 3/4)$. 
In the latter case we have $a = 1/3$ and $\lambda$ must be the $p$-multiple 
of $\lambda_2 = (1, \, 0, \, 3; \, 1/3, \, 1/2; \, 3/4)$.    
Notice that $\lambda_1$ and $\lambda_2$ are exactly 
the solutions \eqref{eqn:sol-r1} and \eqref{eqn:sol-r2} in Example \ref{ex:rat-x}. 
Thus assertion (1) follows immediately. 
\par
Assertion (2). 
Suppose that $x$ is a quadratic irrational. 
Then $s = r/p = 2$, $y = -1 + \sqrt{2}$ and $x = 2 \sqrt{2}-2$. 
Formula \eqref{eqn:saap} in Theorem \ref{thm:a-x} implies $j + j' = 0$,  
which is the case only when $(j, j') = (0, 0)$ and $a = 0$. 
Moreover $p(j + j' + 1) = p$ must be even, so $\lambda$ is the $(p/2)$-multiple of 
$\lambda_3 = (2, \, 0, \,  4; \, 0, \, 1/2; \, 2 \sqrt{2} - 2)$.     
Notice that $\lambda_3$ is exactly the solutions \eqref{eqn:sol-ir} in Example \ref{ex:irr-x}. 
Thus assertion (2) follows immediately. \hfill $\Box$ 
\par\vspace{5mm} 
{\bf Acknowledgments}. 
This work was supported by JSPS KAKENHI Grant Number JP22K03365. 
Sadly, the second author passed away while this work was in progress. 
The first author dedicates the completed version of this article to her memory.   

\end{document}